\newtheoremstyle{conv}{}{}{\upshape}{}{\itshape}{}{ }{}
\newtheoremstyle{note}{}{}{\itshape}{}{\itshape}{}{ }{}
\def\thmhead#1#2#3{%
	\thmnumber{\textup{\mdseries(#2)}}%
	\thmname{\@ifnotempty{#2}{~}#1}%
	\thmnote{{\the\thm@notefont(#3)}}}
\newtheorem{thm}[equation]{Theorem}
\newtheorem{prop}[equation]{Proposition}
\theoremstyle{note}
\newtheorem{note}[equation]{}
\theoremstyle{conv}
\newtheorem{conv}[equation]{}
\theoremstyle{definition}
\newtheorem{rem}[equation]{Remark}
\newtheorem{exam}[equation]{Example}
\newtheorem*{ack}{Acknowledgments}
\newcommand{\gauss}[2]{\genfrac{[}{]}{0pt}{}{#1}{#2}}
\numberwithin{equation}{section}
\title{Vertex subsets with minimal width and dual width in $Q$-polynomial distance-regular graphs}
\author{Hajime Tanaka\thanks{Regular address: Graduate School of Information Sciences, Tohoku University, 6-3-09 Aramaki-Aza-Aoba, Aoba-ku, Sendai 980-8579, Japan} \\
\small Department of Mathematics, University of Wisconsin \\[-0.8ex]
\small 480 Lincoln Drive, Madison, WI 53706, U.S.A. \\
\small \texttt{htanaka@math.is.tohoku.ac.jp}}
\date{\small November 9, 2010 \\
\small Mathematics Subject Classifications: 05E30, 06A12} 
\begin{document}

\maketitle

\begin{abstract}
We study $Q$-polynomial distance-regular graphs from the point of view of what we call \emph{descendents}, that is to say, those vertex subsets with the property that the \emph{width} $w$ and \emph{dual width} $w^*$ satisfy $w+w^*=d$, where $d$ is the diameter of the graph.
We show among other results that a nontrivial descendent with $w\geqslant 2$ is convex precisely when the graph has classical parameters.
The classification of descendents has been done for the $5$ classical families of graphs associated with short regular semilattices.
We revisit and characterize these families in terms of posets consisting of descendents, and extend the classification to all of the $15$ known infinite families with classical parameters and with unbounded diameter.
\end{abstract}

\section{Introduction}

$Q$-\emph{polynomial distance-regular graphs} are thought of as finite/combinatorial analogues of compact symmetric spaces of rank one, and are receiving considerable attention; see e.g., \cite{BI1984B,BCN1989B,Godsil1993B,MT2009EJC} and the references therein.
In this paper, we study these graphs further from the point of view of what we shall call \emph{descendents}, that is to say, those (vertex) subsets with the property that the \emph{width} $w$ and \emph{dual width} $w^*$ satisfy $w+w^*=d$, where $d$ is the diameter of the graph.
See \S \ref{sec: Q-polynomial distance-regular graphs} for formal definitions.
A typical example is a $w$-cube $H(w,2)$ in the $d$-cube $H(d,2)$ $(w\leqslant d)$.

The width and dual width of subsets were introduced and discussed in detail by Brouwer, Godsil, Koolen and Martin \cite{BGKM2003JCTA}, and descendents arise as a special, but very important, case of the theory \cite[\S 5]{BGKM2003JCTA}.
They showed among other results that every descendent is completely regular, and that the induced subgraph is a $Q$-polynomial distance-regular graph if it is connected \cite[Theorems 1--3]{BGKM2003JCTA}.
When the graph is defined on the top fiber of a \emph{short regular semilattice} \cite{Delsarte1976JCTA} (as is the case for the $d$-cube), each object of the semilattice naturally gives rise to a descendent \cite[Theorem 5]{BGKM2003JCTA}.
Hence we may also view descendents as reflecting intrinsic geometric structures of $Q$-polynomial distance-regular graphs.
Incidentally, descendents have been applied to the Erd\H{o}s--Ko--Rado theorem in extremal set theory \cite[Theorem 3]{Tanaka2006JCTA}, and implicitly to the Assmus--Mattson theorem in coding theory \cite[Examples 5.4, 5.5]{Tanaka2009EJC}.

Associated with each $Q$-polynomial distance-regular graph $\Gamma$ is a \emph{Leonard system} \cite{Terwilliger2001LAA,Terwilliger2004LAA,Terwilliger2005DCC}, a linear algebraic framework for a famous theorem of Leonard \cite{Leonard1982SIAM}, \cite[\S 3.5]{BI1984B} which characterizes the terminating branch of the Askey scheme \cite{KLS2010B} of (basic) hypergeometric orthogonal polynomials\footnote{We also allow the specialization $q\rightarrow -1$.} by the duality properties of $\Gamma$.
The starting point of the research presented in this paper is a result of Hosoya and Suzuki \cite[Proposition 1.3]{HS2007EJC} which gives a system of linear equations satisfied by the eigenmatrix of the induced subgraph $\Gamma_Y$ of a descendent $Y$ of $\Gamma$ (when it is connected), and we reformulate this result as the existence of a \emph{balanced} bilinear form between the underlying vector spaces of the Leonard systems associated with $\Gamma$ and $\Gamma_Y$; see \S \ref{sec: basic results}.
Balanced bilinear forms were independently studied in detail in an earlier paper \cite{Tanaka2009LAAb}, and we may derive all the parametric information on descendents from the results of \cite{Tanaka2009LAAb}.

The contents of the paper are as follows.
\S\S \ref{sec: Q-polynomial distance-regular graphs}, \ref{sec: Leonard systems} review basic notation, terminology and facts concerning $Q$-polynomial distance-regular graphs and Leonard systems.
The concept of a descendent is introduced in \S \ref{sec: Q-polynomial distance-regular graphs}.
In \S \ref{sec: basic results}, we relate descendents and balanced bilinear forms.
We give a necessary and sufficient condition on $\Gamma_Y$ to be $Q$-polynomial distance-regular (or equivalently, to be connected) in terms of the parameters of $\Gamma$ (Proposition \eqref{when Gamma_Y is distance-regular}).
In passing, we also show that if $\Gamma_Y$ is connected then a nonempty subset of $Y$ is a descendent of $\Gamma_Y$ precisely when it is a descendent of $\Gamma$ (Proposition \eqref{transitivity}), so that we may define a poset structure on the set of isomorphism classes of $Q$-polynomial distance-regular graphs in terms of isometric embeddings as descendents.
It should be remarked that the parameters of $\Gamma_Y$ in turn determine those of $\Gamma$, provided that the width of $Y$ is at least three; see Proposition \eqref{Phi(Gamma) and Phi(Gamma_Y) determine each other}.

In \S \ref{sec: bipartite case}, we suppose $\Gamma$ is bipartite (with diameter $d$).
The induced subgraph $\Gamma_d^2(x)$ of the distance-$2$ graph of $\Gamma$ on the set $\Gamma_d(x)$ of vertices at distance $d$ from a fixed vertex $x$ is known \cite{Caughman2003EJC} to be distance-regular and $Q$-polynomial.
We show that if $\Gamma_d^2(x)$ has diameter $\lfloor d/2\rfloor$ then for every descendent $Y$ of a halved graph of $\Gamma$, $Y\cap\Gamma_d(x)$ is a descendent of $\Gamma_d^2(x)$ unless it is empty (Proposition \eqref{intersection of descendent of bipartite half with last subconstituent}).
This result will be used in \S \ref{sec: classifications}.

\S \ref{sec: convexity and classical parameters} establishes the main results of the present paper.
Many classical examples of $Q$-polynomial distance-regular graphs have the property that their parameters are expressed in terms of the diameter $d$ and three other parameters $q,\alpha,\beta$ \cite[p.~193]{BCN1989B}.
Such graphs are said to have \emph{classical parameters} $(d,q,\alpha,\beta)$.
There are many results characterizing this property in terms of substructures of graphs; see e.g., \cite[Theorem 7.2]{Weng1998GC}, \cite{Terwilliger199?unpublished}.
We show that a nontrivial descendent $Y$ with width $w\geqslant 2$ is convex (i.e., geodetically closed) precisely when $\Gamma$ has classical parameters $(d,q,\alpha,\beta)$ (Theorem \eqref{convexity and classical parameters}).
Moreover, if this is the case then $\Gamma_Y$ has classical parameters $(w,q,\alpha,\beta)$ (Theorem \eqref{descendents inherit classical parameters}).

In view of this connection with convexity, the remainder of the paper is concerned with graphs with classical parameters.
Currently, there are $15$ known infinite families of such graphs with unbounded diameter, and $5$ of them are associated with short regular semilattices.
The classification of descendents has been done for these $5$ families; by Brouwer et al.~\cite[Theorem 8]{BGKM2003JCTA} for Johnson and Hamming graphs, and by the author \cite[Theorem 1]{Tanaka2006JCTA} for Grassmann, bilinear forms and dual polar graphs.
It turned out that every descendent is isomorphic (under the full automorphism group of the graph) to one afforded by an object of the semilattice.

\S \ref{sec: quantum matroids} is concerned with the $5$ families of ``semilattice-type'' graphs.
We show that if $d\geqslant 4$ then these graphs are characterized by the following properties:
(1) $\Gamma$ has classical parameters; and there is a family $\mathscr{P}$ of descendents of $\Gamma$ such that (2) any two vertices, say, at distance $i$, are contained in a unique descendent in $\mathscr{P}$ with width $i$; and (3) the intersection of two descendents in $\mathscr{P}$ is either empty or a member of $\mathscr{P}$ (Theorem \eqref{characterization of graphs with semilattice-type}).
We remark that if $\mathscr{P}$ is \emph{the} set of descendents of $\Gamma$ then (1), (2) imply (3) (Proposition \eqref{when P is the whole set}).
We shall in fact show that $\mathscr{P}$, together with the partial order defined by reverse inclusion, forms a \emph{regular quantum matroid} \cite{Terwilliger1996ASPM}.
The semilattice structure of $\Gamma$ is then completely recovered from $\mathscr{P}$, and the characterization of $\Gamma$ follows from the classification of nontrivial regular quantum matroids with rank at least four \cite[Theorem 39.6]{Terwilliger1996ASPM}.

\S \ref{sec: classifications} extends the classification of descendents to all of the $15$ families.
We make heavy use of previous work on (noncomplete) convex subgraphs \cite{Lambeck1990D,MPS1993JAC} and maximal cliques \cite{Hemmeter1986EJC,Hemmeter1988EJC,BH1992EJC} in some of these families.
We shall see a strong contrast between the distributions of descendents in the $5$ families of ``semilattice-type'' and the other $10$ families of ``non-semilattice-type''.

The paper ends with an appendix containing necessary data involving the \emph{parameter arrays} (see \S \ref{sec: Leonard systems} for the definition) of Leonard systems.

\section{$Q$-polynomial distance-regular graphs}\label{sec: Q-polynomial distance-regular graphs}

Let $X$ be a finite set and $\mathbb{C}^{X\times X}$ the $\mathbb{C}$-algebra of complex matrices with rows and columns indexed by $X$.
Let $\mathcal{R}=\{R_0,R_1,\dots,R_d\}$ be a set of nonempty symmetric binary relations on $X$.
For each $i$, let $A_i\in\mathbb{C}^{X\times X}$ be the adjacency matrix of the graph $(X,R_i)$.
The pair $(X,\mathcal{R})$ is a (\emph{symmetric}) \emph{association scheme} \emph{with} $d$ \emph{classes} if
\begin{enumerate}[({A}S1)]
\setlength{\itemsep}{0cm}
\item $A_0=I$, the identity matrix;
\item $\sum_{i=0}^dA_i=J$, the all ones matrix;
\item $A_iA_j\in\bm{A}:=\langle A_0,A_1,\dots,A_d\rangle$ for $0\leqslant i,j\leqslant d$.
\end{enumerate}
It follows from (AS1)--(AS3) that $\bm{A}$ is a $(d+1)$-dimensional commutative algebra, called the \emph{Bose--Mesner algebra} of $(X,\mathcal{R})$.
Since $\bm{A}$ is semisimple (as it is closed under conjugate-transposition), there is a basis $\{E_i\}_{i=0}^d$ consisting of the  primitive idempotents of $\bm{A}$, i.e., $E_iE_j=\delta_{ij}E_i$, $\sum_{i=0}^dE_i=I$.
We shall always set $E_0=|X|^{-1}J$.
By (AS2), $\bm{A}$ is also closed under entrywise multiplication, denoted $\circ$.
The $A_i$ are the primitive idempotents of $\bm{A}$ with respect to this multiplication, i.e., $A_i\circ A_j=\delta_{ij}A_i$, $\sum_{i=0}^dA_i=J$.
For convenience, define $A_i=E_i=0$ if $i<0$ or $i>d$.

Let $\mathbb{C}^X$ be the Hermitean space of complex column vectors with coordinates indexed by $X$, so that $\mathbb{C}^{X\times X}$ acts on $\mathbb{C}^X$ from the left.
For each $x\in X$ let $\hat{x}$ be the vector in $\mathbb{C}^X$ with a $1$ in coordinate $x$ and $0$ elsewhere.
The $\hat{x}$ form an orthonormal basis for $\mathbb{C}^X$.

We say $(X,\mathcal{R})$ is $P$-\emph{polynomial with respect to the ordering} $\{A_i\}_{i=0}^d$ if there are integers $a_i,b_i,c_i$ $(0\leqslant i\leqslant d)$ such that $b_d=c_0=0$, $b_{i-1}c_i\ne 0$ $(1\leqslant i\leqslant d)$ and
\begin{equation}\label{three-term recurrence relation}
	A_1A_i=b_{i-1}A_{i-1}+a_iA_i+c_{i+1}A_{i+1} \quad (0\leqslant i\leqslant d)
\end{equation}
where $b_{-1}=c_{d+1}=0$.
Such an ordering is called a $P$-\emph{polynomial ordering}.
It follows that $(X,R_1)$ is regular of valency $k:=b_0$, $a_i+b_i+c_i=k$ $(0\leqslant i\leqslant d)$, $a_0=0$ and $c_1=1$.
Note that $A:=A_1$ generates $\bm{A}$ and hence has $d+1$ distinct eigenvalues $\theta_0:=k,\theta_1,\dots,\theta_d$ so that $A=\sum_{i=0}^d\theta_iE_i$.
Note also that $(X,R_i)$ is the distance-$i$ graph of $(X,R_1)$ for all $i$.
Dually, we say $(X,\mathcal{R})$ is $Q$-\emph{polynomial with respect to the ordering} $\{E_i\}_{i=0}^d$ if there are scalars $a_i^*,b_i^*,c_i^*$ $(0\leqslant i\leqslant d)$ such that $b_d^*=c_0^*=0$, $b_{i-1}^*c_i^*\ne 0$ $(1\leqslant i\leqslant d)$ and
\begin{equation}\label{*three-term recurrence relation}
	E_1\circ E_i=|X|^{-1}(b_{i-1}^*E_{i-1}+a_i^*E_i+c_{i+1}^*E_{i+1}) \quad (0\leqslant i\leqslant d)
\end{equation}
where $b_{-1}^*=c_{d+1}^*=0$.
Such an ordering is called a $Q$-\emph{polynomial ordering}.
It follows that $\mathrm{rank}\, E_1=m:=b_0^*$, $a_i^*+b_i^*+c_i^*=m$ $(0\leqslant i\leqslant d)$, $a_0^*=0$ and $c_1^*=1$.
Note that $|X|E_1$ generates $\bm{A}$ with respect to $\circ$ and hence has $d+1$ distinct entries $\theta_0^*:=m,\theta_1^*,\dots,\theta_d^*$ so that $|X|E_1=\sum_{i=0}^d\theta_i^*A_i$.
We may remark that
\begin{equation}\label{Norton}
	\langle (E_1\mathbb{C}^X)\circ(E_i\mathbb{C}^X)\rangle\subseteq E_{i-1}\mathbb{C}^X+E_i\mathbb{C}^X+E_{i+1}\mathbb{C}^X \quad (0\leqslant i\leqslant d).
\end{equation}
See e.g., \cite[p.~126, Proposition 8.3]{BI1984B}.

A connected simple graph $\Gamma$ with vertex set $V \Gamma = X$, diameter $d$ and path-length distance $\partial$ is called \emph{distance-regular} if the distance-$i$ relations $(0\leqslant i\leqslant d)$ together form an association scheme.
Hence $P$-polynomial association schemes, with specified $P$-polynomial ordering, are in bijection with distance-regular graphs, and we shall say, e.g., that $\Gamma$ \emph{is} $Q$-polynomial, and so on.
The sequence
\begin{equation}
	\iota(\Gamma)=\{b_0,b_1,\dots,b_{d-1};c_1,c_2,\dots,c_d\}
\end{equation}
is called the \emph{intersection array} of $\Gamma$.
Given $x\in X$, we write $\Gamma_i(x)=\{y\in X:\partial(x,y)=i\}$, $k_i=|\Gamma_i(x)|$ $(0\leqslant i\leqslant d)$.
We abbreviate $\Gamma(x)=\Gamma_1(x)$.

We say $\Gamma$ has \emph{classical parameters} $(d,q,\alpha,\beta)$ \cite[p.~193]{BCN1989B} if
\begin{equation}\label{classical parameters}
	b_i=\biggl(\gauss{d}{1}_q-\gauss{i}{1}_q\biggr)\biggl(\beta-\alpha\gauss{i}{1}_q\biggr), \quad c_i=\gauss{i}{1}_q\biggl(1+\alpha\gauss{i-1}{1}_q\biggr) \quad (0\leqslant i\leqslant d)
\end{equation}
where $\gauss{i}{j}_q$ is the $q$-binomial coefficient.
By \cite[Proposition 6.2.1]{BCN1989B}, $q$ is an integer $\ne 0,-1$.
In this case $\Gamma$ has a $Q$-polynomial ordering $\{E_i\}_{i=0}^d$ which we call \emph{standard}, such that
\begin{equation}\label{theta* in terms of classical parameters}
	\theta_i^*=\xi^*\gauss{d-i}{1}_q+\zeta^* \quad (0\leqslant i\leqslant d)
\end{equation}
for some $\xi^*,\zeta^*$ with $\xi^*\ne 0$ \cite[Corollary 8.4.2]{BCN1989B}.

For the rest of this section, suppose further that $\Gamma$ is $Q$-polynomial with respect to the ordering $\{E_i\}_{i=0}^d$.
For the moment fix a ``base vertex'' $x\in X$, and let $E_i^*=E_i^*(x):=\mathrm{diag}(A_i\hat{x})$, $A_i^*=A_i^*(x):=|X|\,\mathrm{diag}(E_i\hat{x})$ $(0\leqslant i\leqslant d)$.\footnote{For a complex matrix $B$, it is customary that $B^*$ denotes the conjugate transpose of $B$. It should be stressed that we are \emph{not} using this convention.}
We abbreviate $A^*=A_1^*$.
Note that $E_i^*E_j^*=\delta_{ij}E_i^*$, $\sum_{i=0}^dE_i^*=I$.
The $E_i^*$ and the $A_i^*$ form two bases for the \emph{dual Bose--Mesner algebra} $\bm{A}^*=\bm{A}^*(x)$ \emph{with respect to} $x$.
Note also that $A^*$ generates $\bm{A}^*$ and $A^*=\sum_{i=0}^d\theta_i^*E_i^*$.
The \emph{Terwilliger} (or \emph{subconstituent}) \emph{algebra} $\bm{T}=\bm{T}(x)$ \emph{of} $\Gamma$ \emph{with respect to} $x$ is the subalgebra of $\mathbb{C}^{X\times X}$ generated by $\bm{A}$, $\bm{A}^*$ \cite{Terwilliger1992JAC,Terwilliger1993JACa,Terwilliger1993JACb}.
Since $\bm{T}$ is closed under conjugate-transposition, it is semisimple and any two nonisomorphic irreducible $\bm{T}$-modules in $\mathbb{C}^X$ are orthogonal.

Let $Y$ be a nonempty subset of $X$ and $\hat{Y}=\sum_{x\in Y}\hat{x}$ its characteristic vector.
We let $\Gamma_Y$ denote the subgraph of $\Gamma$ induced on $Y$.
Set $Y_i=\{x\in X:\partial(x,Y)=i\}$ $(0\leqslant i\leqslant\rho)$, where $\rho=\max\{\partial(x,Y):x\in X\}$ is the covering radius of $Y$.
Note that $\sum_{i=0}^{\rho}\hat{Y}_i=\hat{X}$.
We call $Y$ \emph{completely regular} if $\langle\hat{Y}_0,\hat{Y}_1,\dots,\hat{Y}_{\rho}\rangle$ is an $\bm{A}$-module.
Brouwer et al.~\cite{BGKM2003JCTA} defined the \emph{width} $w$ and \emph{dual width} $w^*$ of $Y$ as follows:
\begin{equation}
	w=\max\{i:\hat{Y}^{\mathsf{T}}A_i\hat{Y}\ne 0\}, \quad w^*=\max\{i:\hat{Y}^{\textsf{T}}E_i\hat{Y}\ne 0\}.
\end{equation}
They showed (among other results) that

\begin{thm}[{\cite[\S 5]{BGKM2003JCTA}}]\label{fundamental inequality}
We have $w+w^*\geqslant d$.
If equality holds then $Y$ is completely regular with covering radius $w^*$, and $(Y,\mathcal{R}^Y)$ forms a $Q$-polynomial association scheme with $w$-classes, where $\mathcal{R}^Y=\{R_i\cap(Y\times Y):0\leqslant i\leqslant w\}$.
\end{thm}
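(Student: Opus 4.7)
The plan is to work with the inner distribution $f_i = \hat{Y}^{\mathsf{T}}A_i\hat{Y}$ and its MacWilliams transform $g_j = \hat{Y}^{\mathsf{T}}E_j\hat{Y} = \|E_j\hat{Y}\|^2 \geqslant 0$, linked by $f_i = \sum_j P_{ji}\, g_j$ from the expansion $A_i = \sum_j P_{ji} E_j$. By definition $w = \max\{i : f_i \neq 0\}$ and $w^* = \max\{j : g_j \neq 0\}$. Set $S = \{j : g_j > 0\}$ (the dual degree set) and $M := \bm{A}\hat{Y}$. Since $\bm{A} = \bigoplus_j \mathbb{C}E_j$ is a direct sum of mutually orthogonal projections, $M = \bigoplus_{j \in S} \mathbb{C}E_j\hat{Y}$ and $\dim M = |S| \leqslant w^* + 1$.

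For the inequality $w + w^* \geqslant d$, the vanishing $f_i = 0$ for $i > w$ yields $d - w$ homogeneous relations $\sum_{j \in S} P_{ji}\, g_j = 0$ with the known nontrivial solution $(g_j)_{j \in S}$. By the $P$-polynomial recurrence \eqref{three-term recurrence relation}, $P_{ji} = v_i(\theta_j)$ where $v_i$ has degree exactly $i$ and $\{v_i\}$ is discretely orthogonal with positive weights on the $d+1$ distinct points $\theta_0,\dots,\theta_d$. A linear dependence among the columns $(v_i(\theta_j))_{i > w}$ translates, via Lagrange interpolation and the orthogonality, into a nonzero polynomial $q \in \mathbb{C}[x]_{\leqslant w}$ vanishing at the $d + 1 - |S|$ points $\{\theta_k : k \notin S\}$; when $|S| \leqslant d - w$ this exceeds $\deg q$, forcing $q = 0$ and hence the trivial solution. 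So $|S| \geqslant d - w + 1$, and combined with $|S| \leqslant w^* + 1$ this gives $w + w^* \geqslant d$.

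In the equality case $w + w^* = d$, both bounds are tight: $|S| = w^* + 1$, so every $E_j\hat{Y}$ with $0 \leqslant j \leqslant w^*$ is nonzero, and $\dim M = w^* + 1$. To prove complete regularity, I would identify $V := \langle \hat{Y}_0, \dots, \hat{Y}_\rho\rangle$ with $M$; once $V = M$, the (tautological) $\bm{A}$-invariance of $M$ implies that of $V$, which is the definition of complete regularity, and the linear independence of the $\hat{Y}_i$ (disjoint supports) combined with $\dim V = w^* + 1$ forces $\rho = w^*$. For the identification, the plan is to decompose each $\hat{Y}_i$ explicitly as a linear combination $\sum_j c_{ij} E_j \hat{Y}$: fixing $x_0 \in Y$ and working inside the Terwilliger algebra $\bm{T}(x_0)$, one combines the localization of $E_j\hat{Y}$ on distances from $x_0$ (mediated by the $E_i^*$) with the combined recurrences \eqref{three-term recurrence relation} and \eqref{*three-term recurrence relation} to solve for $c_{ij}$ in terms of the parameters of $\Gamma$. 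Once $Y$ is completely regular, the $\bm{A}$-invariance of $V$ translates into constant intersection numbers on $Y \times Y$, confirming (AS1)--(AS3) for $(Y, \mathcal{R}^Y)$ with $w$ classes, and the renormalized restrictions $\{E_j^Y\}_{j=0}^{w^*}$ furnish a $Q$-polynomial ordering inheriting \eqref{*three-term recurrence relation}. The main obstacle is precisely this explicit inversion identifying $\hat{Y}_i$ inside $M$; the inequality step is by contrast a clean orthogonal-polynomial zero-counting argument.
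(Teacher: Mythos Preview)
The paper does not give its own proof of this theorem; it is quoted from \cite{BGKM2003JCTA}. The only argument the paper supplies is the brief sketch after \eqref{connectivity implies distance-regularity} for the $Q$-polynomiality of $(Y,\mathcal{R}^Y)$, via the key relation $\breve{E}_i\breve{E}_j=0$ whenever $|i-j|>w^*$ (equation \eqref{orthogonal if |i-j|>w*}) and the resulting chain of ideals \eqref{ideals of A'}. So the comparison is really with the original BGKM argument.

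Your proof of the inequality is correct, though you should say explicitly what $q$ is: it is the degree-$d$ interpolant with $q(\theta_j)=g_j/m_j$ for $j\in S$ and $q(\theta_k)=0$ for $k\notin S$. Then $\langle q,v_i\rangle=\sum_k m_k q(\theta_k)v_i(\theta_k)=\sum_{j\in S}g_jv_i(\theta_j)=f_i$, which vanishes for $i>w$; since the $v_i$ are an orthogonal basis this forces $q\in\langle v_0,\dots,v_w\rangle$, hence $\deg q\le w$, and the root count $d+1-|S|$ finishes it. This is essentially the BGKM argument.

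The gap is in the equality case. You propose to show $V=M$ by expressing each $\hat{Y}_i$ as $\sum_j c_{ij}E_j\hat{Y}$ via a Terwilliger-algebra computation, but this presupposes $\hat{Y}_i\in M$, which is exactly the statement of complete regularity. Nothing in the $E_i^*$-localization at a single base vertex $x_0\in Y$ produces that containment, because $E_i^*(x_0)$ does not see $\partial(\cdot,Y)$. The actual BGKM argument is more elementary and avoids this circularity: for every $x\in X$ the outer-distribution row $((A_i\hat{Y})_x)_{i=0}^d$ is supported on the window $\{\ell,\dots,\ell+w\}$ where $\ell=\partial(x,Y)$ (triangle inequality plus width $w$); since the row space has dimension $|S|=w^*+1$ and rows with different $\ell$ have different leading positions, an echelon argument gives $\rho=w^*$ and shows all rows with the same $\ell$ are proportional, hence equal (they all sum to $|Y|$). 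This yields complete regularity directly.

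Your final sentence on the association-scheme and $Q$-polynomial structure is also too quick. Complete regularity of $Y$ in $\Gamma$ does not by itself force $\breve{A}_i\breve{A}_j\in\langle\breve{A}_0,\dots,\breve{A}_w\rangle$; the paper's route through \eqref{orthogonal if |i-j|>w*} (which uses the $Q$-polynomial Krein vanishing and positive semidefiniteness of the $\breve{E}_i$) is what actually establishes both the scheme axioms and the $Q$-polynomial ordering \eqref{ideals of A'}.
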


We call $Y$ a \emph{descendent} of $\Gamma$ if $w+w^*=d$.
The descendents with $w=0$ are precisely the singletons, and $X$ is the unique descendent with $w=d$; we shall refer to these cases as \emph{trivial} and say \emph{nontrivial} otherwise.
By \eqref{fundamental inequality} it follows that

\begin{thm}[{\cite[Theorem 3]{BGKM2003JCTA}}]\label{connectivity implies distance-regularity}
Suppose $w+w^*=d$.
If $\Gamma_Y$ is connected then it is a $Q$-polynomial distance-regular graph with diameter $w$.
\end{thm}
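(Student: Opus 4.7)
The plan is to leverage Theorem~\eqref{fundamental inequality}, which already gives that $(Y,\mathcal{R}^Y)$ is a $Q$-polynomial association scheme with $w$ classes; what remains is to verify that under the connectivity hypothesis the relation $R_i^Y := R_i \cap (Y \times Y)$ coincides with the distance-$i$ relation of $\Gamma_Y$, so that $\Gamma_Y$ is distance-regular with diameter $w$ and the $Q$-polynomial structure above is the one naturally associated to it.

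First I would observe that the adjacency matrix $A_1^Y$ of $\Gamma_Y$ lies in the Bose--Mesner algebra of $(Y,\mathcal{R}^Y)$, so every power $(A_1^Y)^k$ is a fixed linear combination of $A_0^Y, A_1^Y, \ldots, A_w^Y$. Consequently the $(u,v)$-entry of $(A_1^Y)^k$---the number of walks of length $k$ from $u$ to $v$ in $\Gamma_Y$---is constant on each relation $R_i^Y$. In particular the path-length distance $\partial_Y(u,v)$ in $\Gamma_Y$, which is well defined because $\Gamma_Y$ is connected, depends only on the index $i$ with $(u,v) \in R_i^Y$; call this common value $\sigma(i)$. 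Since every edge of $\Gamma_Y$ is also an edge of $\Gamma$, one has $\sigma(i) \geq i$ for all $i$.

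The remainder is a brief pigeonhole argument. Let $D = \mathrm{diam}(\Gamma_Y)$, so that $\sigma$ is a map $\{0, \ldots, w\} \to \{0, \ldots, D\}$. Connectivity makes $\sigma$ surjective, giving $D+1 \leq w+1$; on the other hand $\sigma(w) \leq D$ combined with $\sigma(w) \geq w$ yields $D \geq w$, so $D = w$ and $\sigma$ is a bijection on $\{0, 1, \ldots, w\}$. A downward induction starting from $\sigma(w) = w$ and using bijectivity together with $\sigma(i) \geq i$ then forces $\sigma(i) = i$ for every $i$. This identifies $R_i^Y$ with the distance-$i$ relation of $\Gamma_Y$, making it distance-regular of diameter $w$, and the $Q$-polynomial property transported by Theorem~\eqref{fundamental inequality} completes the claim. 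I do not foresee a serious obstacle: the role of connectivity is precisely to guarantee that every scheme relation is realized as a graph-distance class, ruling out any collapsing of indices in $\sigma$.
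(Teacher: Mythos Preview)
Your proposal is correct and follows exactly the route the paper indicates: the paper simply says that \eqref{connectivity implies distance-regularity} ``follows'' from \eqref{fundamental inequality} (citing \cite[Theorem~3]{BGKM2003JCTA} for the original), and you have supplied the straightforward details showing that connectivity forces the scheme relations $R_i^Y$ to coincide with the distance-$i$ relations of $\Gamma_Y$. There is nothing to add.
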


We comment on the $Q$-polynomiality of $(Y,\mathcal{R}^Y)$ stated in \eqref{fundamental inequality}.
Suppose $w+w^*=d$ and let $\bm{A}'$ be the Bose--Mesner algebra of $(Y,\mathcal{R}^Y)$.
For every $B\in\bm{A}$, let $\breve{B}$ be the principal submatrix of $B$ corresponding to $Y$.
Brouwer et al.~\cite[\S 4]{BGKM2003JCTA} observed 
\begin{equation}\label{orthogonal if |i-j|>w*}
	\breve{E}_i\breve{E}_j=0 \quad \text{if} \ |i-j|>w^*,
\end{equation}
and then showed that $\langle\breve{E}_0,\breve{E}_1,\dots,\breve{E}_i\rangle$ is an ideal of $\bm{A}'$ for all $i$. Hence we get a $Q$-polynomial ordering $\{E_i'\}_{i=0}^w$ of the primitive idempotents of $\bm{A}'$ such that
\begin{equation}\label{ideals of A'}
	\langle E_0',E_1',\dots,E_i'\rangle=\langle\breve{E}_0,\breve{E}_1,\dots,\breve{E}_i\rangle \quad (0\leqslant i\leqslant w).
\end{equation}

Throughout we shall adopt the following convention and retain the notation of \S \ref{sec: Q-polynomial distance-regular graphs}:

\begin{conv}\label{general convention}
For the rest of this paper, we assume $\Gamma$ is distance-regular with diameter $d\geqslant 3$ and is $Q$-polynomial with respect to the ordering $\{E_i\}_{i=0}^d$.
Unless otherwise stated, $Y$ will denote a nontrivial descendent of $\Gamma$ with width $w$ and dual width $w^*=d-w$.
\end{conv}

\section{Leonard systems}\label{sec: Leonard systems}

Let $d$ be a positive integer.
Let $\mathfrak{A}$ be a $\mathbb{C}$-algebra isomorphic to the full matrix algebra $\mathbb{C}^{(d+1)\times(d+1)}$ and $W$ an irreducible left $\mathfrak{A}$-module.
Note that $W$ is unique up to isomorphism and $\dim W=d+1$.
An element $\mathfrak{a}$ of $\mathfrak{A}$ is called \emph{multiplicity-free} if it has $d+1$ mutually distinct eigenvalues.
Suppose $\mathfrak{a}$ is multiplicity-free and let $\{\theta_i\}_{i=0}^d$ be an ordering of the eigenvalues of $\mathfrak{a}$.
Then there is a sequence of elements $\{\mathfrak{e}_i\}_{i=0}^d$ in $\mathfrak{A}$ such that (i)~$\mathfrak{a}\mathfrak{e}_i=\theta_i\mathfrak{e}_i$; (ii) $\mathfrak{e}_i\mathfrak{e}_j=\delta_{ij}\mathfrak{e}_i$; (iii) $\sum_{i=0}^d\mathfrak{e}_i=\mathfrak{1}$ where $\mathfrak{1}$ is the identity of $\mathfrak{A}$.
We call $\mathfrak{e}_i$ the \emph{primitive idempotent} of $\mathfrak{a}$ associated with $\theta_i$.
Note that $\mathfrak{a}$ generates $\langle\mathfrak{e}_0,\mathfrak{e}_1,\dots,\mathfrak{e}_d\rangle$.

A \emph{Leonard system} in $\mathfrak{A}$ \cite[Definition 1.4]{Terwilliger2001LAA} is a sequence
\begin{equation}\label{Leonard system}
	\Phi=\left(\mathfrak{a};\mathfrak{a}^*;\{\mathfrak{e}_i\}_{i=0}^d;\{\mathfrak{e}_i^*\}_{i=0}^d\right)
\end{equation}
satisfying the following axioms (LS1)--(LS5):
\begin{enumerate}[(LS1)]
\setlength{\itemsep}{0cm}
\item Each of $\mathfrak{a},\mathfrak{a}^*$ is a multiplicity-free element in $\mathfrak{A}$.
\item $\{\mathfrak{e}_i\}_{i=0}^d$ is an ordering of the primitive idempotents of $\mathfrak{a}$.
\item $\{\mathfrak{e}_i^*\}_{i=0}^d$ is an ordering of the primitive idempotents of $\mathfrak{a}^*$.
\item $\mathfrak{e}_i^*\mathfrak{a}\mathfrak{e}_j^*=\begin{cases} 0 & \text{if } |i-j|>1 \\ \ne 0 & \text{if } |i-j|=1 \end{cases} \quad (0\leqslant i,j\leqslant d)$.
\item $\mathfrak{e}_i\mathfrak{a}^*\mathfrak{e}_j=\begin{cases} 0 & \text{if } |i-j|>1 \\ \ne 0 & \text{if } |i-j|=1 \end{cases} \quad (0\leqslant i,j\leqslant d)$.
\end{enumerate}
We call $d$ the \emph{diameter} of $\Phi$.
For convenience, define $\mathfrak{e}_i=\mathfrak{e}_i^*=0$ if $i<0$ or $i>d$.
Observe%
\begin{equation}\label{ith component of 0*}
	\mathfrak{e}_0^*W+\mathfrak{e}_1^*W+\dots+\mathfrak{e}_i^*W=\mathfrak{e}_0^*W+\mathfrak{a}\mathfrak{e}_0^*W+\dots+\mathfrak{a}^i\mathfrak{e}_0^*W \quad (0\leqslant i\leqslant d).
\end{equation}

A Leonard system $\Psi$ in a $\mathbb{C}$-algebra $\mathfrak{B}$ is \emph{isomorphic} to $\Phi$ if there is a $\mathbb{C}$-algebra isomorphism $\sigma:\mathfrak{A}\rightarrow\mathfrak{B}$ such that $\Psi=\Phi^{\sigma}:=\left(\mathfrak{a}^{\sigma};\mathfrak{a}^{*\sigma};\{\mathfrak{e}_i^{\sigma}\}_{i=0}^d;\{\mathfrak{e}_i^{*\sigma}\}_{i=0}^d\right)$.
Let $\xi,\xi^*,\zeta,\zeta^*$ be scalars with $\xi,\xi^*\ne 0$.
Then
\begin{equation}\label{affine transformation}
	\left(\xi\mathfrak{a}+\zeta\mathfrak{1};\xi^*\mathfrak{a}^*+\zeta^*\mathfrak{1};\{\mathfrak{e}_i\}_{i=0}^d;\{\mathfrak{e}_i^*\}_{i=0}^d\right)
\end{equation}
is a Leonard system in $\mathfrak{A}$, called an \emph{affine transformation} of $\Phi$.
We say $\Phi$, $\Psi$ are \emph{affine-isomorphic} if $\Psi$ is isomorphic to an affine transformation of $\Phi$.
The \emph{dual} of $\Phi$ is
\begin{align}
	\Phi^*&=\left(\mathfrak{a}^*;\mathfrak{a};\{\mathfrak{e}_i^*\}_{i=0}^d;\{\mathfrak{e}_i\}_{i=0}^d\right).
\end{align}
For any object $f$ associated with $\Phi$, we shall occasionally denote by $f^*$ the corresponding object for $\Phi^*$; an example is $\mathfrak{e}_i^*(\Phi)=\mathfrak{e}_i(\Phi^*)$.
Note that $(f^*)^*=f$.

\begin{exam}\label{Phi(Gamma)}
With reference to \eqref{general convention}, fix the base vertex $x\in X$ and let $W=\bm{A}\hat{x}=\bm{A}^*\hat{X}$ be the \emph{primary} $\bm{T}$-module \cite[Lemma 3.6]{Terwilliger1992JAC}.
Set $\mathfrak{a}=A|_W$, $\mathfrak{a}^*=A^*|_W$, $\mathfrak{e}_i=E_i|_W$, $\mathfrak{e}_i^*=E_i^*|_W$ $(0\leqslant i\leqslant d)$.
Then $\Phi=\Phi(\Gamma;x):=\left(\mathfrak{a};\mathfrak{a}^*;\{\mathfrak{e}_i\}_{i=0}^d;\{\mathfrak{e}_i^*\}_{i=0}^d\right)$ is a Leonard system.
See \cite[Theorem 4.1]{Terwilliger1993JACa}, \cite{Cerzo2010LAA}.
We remark that $\Phi(\Gamma;x)$ does not depend on $x$ up to isomorphism, so that we shall write $\Phi(\Gamma)=\Phi(\Gamma;x)$ where the context allows.
\end{exam}

\begin{exam}\label{Phi(W)}
More generally, let $W$ be any irreducible $\bm{T}$-module.
We say $W$ is \emph{thin} if $\dim E_i^*W\leqslant 1$ for all $i$.
Suppose $W$ is thin.
Then $W$ is \emph{dual thin}, i.e., $\dim E_iW\leqslant 1$ for all $i$, and there are integers $\epsilon$ (\emph{endpoint}), $\epsilon^*$ (\emph{dual endpoint}), $\delta$ (\emph{diameter}) such that $\{i:E_i^*W\ne 0\}=\{\epsilon,\epsilon+1,\dots,\epsilon+\delta\}$, $\{i:E_iW\ne 0\}=\{\epsilon^*,\epsilon^*+1,\dots,\epsilon^*+\delta\}$ \cite[Lemmas 3.9, 3.12]{Terwilliger1992JAC}.\footnote{In \cite{Terwilliger1992JAC,Terwilliger1993JACa,Terwilliger1993JACb}, $\epsilon$ and $\epsilon^*$ are called the dual endpoint and endpoint of $W$, respectively.}
Set $\mathfrak{a}=A|_W$, $\mathfrak{a}^*=A^*|_W$, $\mathfrak{e}_i=E_{\epsilon^*+i}|_W$, $\mathfrak{e}_i^*=E_{\epsilon+i}^*|_W$ $(0\leqslant i\leqslant \delta)$.
Then $\Phi=\Phi(W):=\left(\mathfrak{a};\mathfrak{a}^*;\{\mathfrak{e}_i\}_{i=0}^{\delta};\{\mathfrak{e}_i^*\}_{i=0}^{\delta}\right)$ is a Leonard system.
Note that $\Phi(\Gamma;x)=\Phi(\bm{A}\hat{x})$.
\end{exam}

For $0\leqslant i\leqslant d$ let $\theta_i$ (resp. $\theta_i^*$) be the eigenvalue of $\mathfrak{a}$ (resp. $\mathfrak{a}^*$) associated with $\mathfrak{e}_i$ (resp. $\mathfrak{e}_i^*$).
By \cite[Theorem 3.2]{Terwilliger2001LAA} there are scalars $\varphi_i$ $(1\leqslant i\leqslant d)$ and a $\mathbb{C}$-algebra isomorphism $\natural:\mathfrak{A}\rightarrow\mathbb{C}^{(d+1)\times(d+1)}$ such that
$\mathfrak{a}^{\natural}$ (resp. $\mathfrak{a}^{*\natural}$) is the lower (resp. upper) bidiagonal matrix with diagonal entries $(\mathfrak{a}^{\natural})_{ii}=\theta_i$ (resp. $(\mathfrak{a}^{*\natural})_{ii}=\theta_i^*$) $(0\leqslant i\leqslant d)$ and subdiagonal (resp. superdiagonal) entries $(\mathfrak{a}^{\natural})_{i,i-1}=1$ (resp. $(\mathfrak{a}^{*\natural})_{i-1,i}=\varphi_i$) $(1\leqslant i\leqslant d)$.
We let $\phi_i=\varphi_i(\Phi^{\Downarrow})$ $(1\leqslant i\leqslant d)$, where $\Phi^{\Downarrow}=\left(\mathfrak{a};\mathfrak{a}^*;\{\mathfrak{e}_{d-i}\}_{i=0}^d;\{\mathfrak{e}_i^*\}_{i=0}^d\right)$.\footnote{\label{D4 action}Viewed as permutations on all Leonard systems, $*$ and $\Downarrow$ generate a dihedral group with $8$ elements which plays a fundamental role in the theory of Leonard systems.}
The \emph{parameter array} of $\Phi$ is
\begin{equation}\label{parameter array}
	p(\Phi)=\left(\{\theta_i\}_{i=0}^d;\{\theta_i^*\}_{i=0}^d;\{\varphi_i\}_{i=1}^d;\{\phi_i\}_{i=1}^d\right).
\end{equation}
By \cite[Theorem 1.9]{Terwilliger2001LAA}, the isomorphism class of $\Phi$ is determined by $p(\Phi)$.
In \cite{Terwilliger2005DCC}, $p(\Phi)$ is given in closed form; see also \eqref{list of parameter arrays}.
Note that the parameter array of \eqref{affine transformation} is given by
\begin{equation}\label{parameter array of affine transformation}
	\left(\{\xi\theta_i+\zeta\}_{i=0}^d;\{\xi^*\theta_i^*+\zeta^*\}_{i=0}^d;\{\xi\xi^*\varphi_i\}_{i=1}^d;\{\xi\xi^*\phi_i\}_{i=1}^d\right).
\end{equation}

Let $u$ be a nonzero vector in $\mathfrak{e}_0W$.
Then $\{\mathfrak{e}_i^*u\}_{i=0}^d$ is a basis for $W$ \cite[Lemma 10.2]{Terwilliger2004LAA}.
Define the scalars $a_i,b_i,c_i$ $(0\leqslant i\leqslant d)$ by $b_d=c_0=0$ and
\begin{equation}
	\mathfrak{a}\mathfrak{e}_i^*u=b_{i-1}\mathfrak{e}_{i-1}^*u+a_i\mathfrak{e}_i^*u+c_{i+1}\mathfrak{e}_{i+1}^*u \quad (0\leqslant i\leqslant d)
\end{equation}
where $b_{-1}=c_{d+1}=0$.
By \cite[Theorem 17.7]{Terwilliger2004LAA} it follows that
\begin{equation}\label{bi, ci}
	b_i=\varphi_{i+1}\frac{\tau_i^*(\theta_i^*)}{\tau_{i+1}^*(\theta_{i+1}^*)}, \quad c_i=\phi_i\frac{\eta_{d-i}^*(\theta_i^*)}{\eta_{d-i+1}^*(\theta_{i-1}^*)} \quad (0\leqslant i\leqslant d)
\end{equation}
where $\theta_{-1}^*,\theta_{d+1}^*$ are indeterminates, $\varphi_{d+1}=\phi_0=0$ and
\begin{equation}
	\tau_i(\lambda)=\prod_{l=0}^{i-1}(\lambda-\theta_l), \quad \eta_i(\lambda)=\prod_{l=0}^{i-1}(\lambda-\theta_{d-l}) \quad (0\leqslant i\leqslant d).
\end{equation}

\begin{exam}\label{parameters of Phi and Gamma}
Let $\Phi=\Phi(\Gamma)$ be as in \eqref{Phi(Gamma)}.
Then $b_i(\Gamma)=b_i(\Phi)$, $b_i^*(\Gamma)=b_i^*(\Phi)$, $c_i(\Gamma)=c_i(\Phi)$, $c_i^*(\Gamma)=c_i^*(\Phi)$ $(0\leqslant i\leqslant d)$.
See \cite[Theorem 4.1]{Terwilliger1993JACa}.
\end{exam}

Let $\Phi'=\bigl(\mathfrak{a}';\mathfrak{a}^{*\prime};\{\mathfrak{e}_i'\}_{i=0}^{d'};\{\mathfrak{e}_i^{*\prime}\}_{i=0}^{d'}\bigr)$ be another Leonard system with diameter $d'\leqslant d$ and $W'=W(\Phi')$ the vector space underlying $\Phi'$.
Given an integer $\rho$ ($0\leqslant\rho\leqslant d-d'$), a nonzero bilinear form $(\cdot |\cdot): W\times W'\rightarrow\mathbb{C}$ is called $\rho$-\emph{balanced with respect to} $\Phi$, $\Phi'$ if
\begin{enumerate}[(B1)]
\setlength{\itemsep}{0cm}
\item $(\mathfrak{e}_i^*W|\mathfrak{e}_j^{*\prime}W')=0$ if $i-\rho\ne j$ \ $(0\leqslant i\leqslant d, \ 0\leqslant j\leqslant d')$;
\item $(\mathfrak{e}_iW|\mathfrak{e}_j'W')=0$ if $i<j$ or $i>j+d-d'$ \ $(0\leqslant i\leqslant d, \ 0\leqslant j\leqslant d')$.
\end{enumerate}
We call $\Phi'$ a $\rho$-\emph{descendent} of $\Phi$ whenever such a form exists.
The $\rho$-\emph{descendents} of $\Phi$ are completely classified; see \eqref{characterization of bilinear form in parametric form}.
In particular, by \eqref{uniqueness of expression}, \eqref{characterization of bilinear form in parametric form}, \eqref{parameter array of affine transformation} it follows that

\begin{prop}\label{uniqueness of descendents}
Let $d,d',\rho$ be integers such that $1\leqslant d'\leqslant d$, $0\leqslant\rho\leqslant d-d'$.
Then a Leonard system with diameter $d$ has at most one $\rho$-descendent with diameter $d'$ up to affine isomorphism.
Conversely, if $d'\geqslant 3$ then a Leonard system with diameter $d'$ is a $\rho$-descendent of at most one Leonard system with diameter $d$ up to affine isomorphism.
\end{prop}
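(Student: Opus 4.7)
The plan is to reduce the statement to the parametric classification of balanced bilinear forms obtained in \cite{Tanaka2009LAAb}, namely \eqref{uniqueness of expression} and \eqref{characterization of bilinear form in parametric form}, combined with the fact that parameter arrays are a complete invariant. Recall that by \cite[Theorem 1.9]{Terwilliger2001LAA} a Leonard system is determined up to isomorphism by its parameter array, while \eqref{parameter array of affine transformation} shows that the passage to an affine transformation induces the explicit action
\begin{equation*}
\theta_i\mapsto\xi\theta_i+\zeta, \quad \theta_i^*\mapsto\xi^*\theta_i^*+\zeta^*, \quad \varphi_i\mapsto\xi\xi^*\varphi_i, \quad \phi_i\mapsto\xi\xi^*\phi_i
\end{equation*}
on parameter arrays, with $\xi,\xi^*\ne 0$. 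Hence two Leonard systems are affine-isomorphic precisely when their parameter arrays differ by some such four-parameter rescaling, and the entire question becomes one about the degrees of freedom available in $p(\Phi')$ versus $p(\Phi)$.

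For the first assertion, I would fix $\Phi$ and let $\Phi'$ be any $\rho$-descendent with diameter $d'$. By \eqref{characterization of bilinear form in parametric form}, the parameter array $p(\Phi')$ admits a closed-form expression in terms of $p(\Phi)$, $d'$ and $\rho$, subject only to the choice of the four scalars $\xi,\xi^*,\zeta,\zeta^*$ that implicitly normalize the balanced bilinear form and the generator $\mathfrak{a}^{*\prime}$. Any two $\rho$-descendents of $\Phi$ with the same diameter therefore have parameter arrays lying in a common four-parameter orbit, so by \eqref{parameter array of affine transformation} they are affine-isomorphic.

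For the converse, assume $d'\geqslant 3$ and suppose $\Phi'$ is a $\rho$-descendent of two Leonard systems $\Phi_1$, $\Phi_2$, each of diameter $d$. Applying \eqref{characterization of bilinear form in parametric form} to each realization, both $p(\Phi_1)$ and $p(\Phi_2)$ are solutions of the same parametric system that \emph{produces} $p(\Phi')$. The uniqueness result \eqref{uniqueness of expression} asserts that, once $d'\geqslant 3$, this system pins down $p(\Phi)$ up to the same four-parameter affine freedom; hence $p(\Phi_1)$ and $p(\Phi_2)$ are related by a rescaling of the form above, and $\Phi_1$, $\Phi_2$ are affine-isomorphic. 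The threshold $d'\geqslant 3$ is essential because at least three indices of the $\varphi_i,\phi_i$ are required to identify the Askey-type family of $\Phi$ and to locate its parameters within that family; for $d'\leqslant 2$ too many distinct ambient Leonard systems share the same small ``tail'' of a descendent.

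The main obstacle is not any single calculation but rather the careful bookkeeping needed to identify the ambiguity in \eqref{characterization of bilinear form in parametric form}, coming from the rescaling of the bilinear form and of the generators, with exactly the four-parameter group of affine transformations of Leonard systems described in \eqref{parameter array of affine transformation}. Once this match is made, both uniqueness statements are immediate corollaries of \eqref{uniqueness of expression} together with the isomorphism criterion \cite[Theorem 1.9]{Terwilliger2001LAA}.
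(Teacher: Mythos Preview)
Your approach is essentially the same as the paper's: the proposition is stated there as an immediate consequence of \eqref{uniqueness of expression}, \eqref{characterization of bilinear form in parametric form} and \eqref{parameter array of affine transformation}, and you have correctly unpacked how these three ingredients combine. One minor correction: \eqref{uniqueness of expression} is not from \cite{Tanaka2009LAAb} but is due to Terwilliger (cf.~\cite{Terwilliger1992JAC,Terwilliger2005DCC}); and your heuristic for the threshold $d'\geqslant 3$ should simply point to the ``Moreover'' clause of \eqref{uniqueness of expression}, which only guarantees uniqueness of the parametric expression for diameter at least $3$.
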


\section{Basic results concerning descendents}\label{sec: basic results}

With reference to \eqref{general convention}, we begin with the following observation (cf.~\cite[p.~73]{HS2007EJC}):

\begin{note}\label{Hosoya--Suzuki observation}
With the notation of \S \ref{sec: Q-polynomial distance-regular graphs}, for any $i,j$ $(0\leqslant i\leqslant d,\ 0\leqslant j\leqslant w)$ we have
\begin{equation*}
	\breve{E}_iE_j'=\begin{cases} 0 & \text{if}\ i<j\ \text{or}\ i>j+w^*, \\ \ne 0 & \text{if}\ i=j\ \text{or}\ i=j+w^*. \end{cases}
\end{equation*}
\end{note}

\begin{proof}
By \eqref{orthogonal if |i-j|>w*}, \eqref{ideals of A'} it follows that $\breve{E}_i\in\langle E_{i-w^*}',\dots,E_i'\rangle$, so that $\breve{E}_iE_j'=0$ if $i<j$ or $i>j+w^*$.
By \eqref{ideals of A'} we also find $\breve{E}_jE_j'\ne 0$.
Note that $E_{j+w^*}\circ E_j\in\langle E_{w^*},\dots,E_d\rangle$ and the coefficient of $E_{w^*}$ in $E_{j+w^*}\circ E_j$ is nonzero.
Hence $\mathrm{trace}(\breve{E}_{j+w^*}\breve{E}_j)=\hat{Y}^{\mathsf{T}}(E_{j+w^*}\circ E_j)\hat{Y}\ne 0$.
It follows that $\breve{E}_{j+w^*}\breve{E}_j\ne 0$ and therefore $\breve{E}_{j+w^*}E_j'\ne 0$ by \eqref{orthogonal if |i-j|>w*}, \eqref{ideals of A'}.
\end{proof}

As mentioned in the introduction, Hosoya and Suzuki translated \eqref{Hosoya--Suzuki observation} into a system of linear equations satisfied by the eigenmatrix of $(Y,\mathcal{R}^Y)$; see \cite[Proposition 1.3]{HS2007EJC}.
We now show how descendents are related to balanced bilinear forms:

\begin{prop}\label{when Gamma_Y is distance-regular}
Pick any $x\in Y$, and let the parameter array of $\Phi=\Phi(\Gamma;x)$ be given as in \eqref{list of parameter arrays}.
Suppose $w>1$.
Then $\Gamma_Y$ is a $Q$-polynomial distance-regular graph precisely for Cases I, IA, II, IIA, IIB, IIC; or Case III with $w^*$ even.
If this is the case then the bilinear form $(\cdot|\cdot):\bm{A}\hat{x}\times\bm{A}'\hat{x}\rightarrow\mathbb{C}$ defined by $(u|u')=u^{\mathsf{T}}u'$ is $0$-balanced with respect to $\Phi$, $\Phi(\Gamma_Y;x)$.
\end{prop}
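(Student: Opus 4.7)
The plan is to prove the two assertions separately: first, the $0$-balanced property of the bilinear form assuming $\Gamma_Y$ is distance-regular; and second, the parametric classification. For the balanced property, I would exploit that both primary modules are cyclic, so that $E_iW=\langle E_i\hat{x}\rangle$ and $E_j'W'=\langle E_j'\hat{x}\rangle$. Using symmetry of the primitive idempotents,
\[
(E_i\hat{x}\,|\,E_j'\hat{x})=\sum_{y\in Y}(E_i)_{y,x}(E_j')_{y,x}=(\breve{E}_iE_j')_{x,x},
\]
so axiom (B2) reduces immediately to \eqref{Hosoya--Suzuki observation}. For axiom (B1), I would first show that $\Gamma_Y$ is an \emph{isometric} subgraph of $\Gamma$: starting from the observation that $\breve{A}_1$ coincides with the adjacency matrix $A_1'$ of $\Gamma_Y$, I would note that the inherited scheme Bose--Mesner algebra $\bm{A}'$ and the Bose--Mesner algebra of the distance-regular $\Gamma_Y$ are both $(w+1)$-dimensional commutative subalgebras of $\mathbb{C}^{Y\times Y}$ generated by this common matrix; a comparison of their $\circ$-primitive idempotents together with the inequality $\partial_{\Gamma_Y}\geqslant\partial$ forces $\breve{A}_i=A_i'$ for all $i$. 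Once distances agree in $Y$, the vectors in $E_i^*W$ and $E_j^{*\prime}W'$ have disjoint supports in $Y$ whenever $i\ne j$, giving (B1); nonzeroness follows from $(\hat{x}\,|\,\hat{x})=1$.

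For the parametric characterization, the necessity is then immediate: combining the first part with \eqref{connectivity implies distance-regularity} shows that if $\Gamma_Y$ is distance-regular then $\Phi(\Gamma_Y;x)$ is a $0$-descendent of $\Phi(\Gamma;x)$ with diameter $w\geqslant 2$, and the classification of $0$-descendents from \cite{Tanaka2009LAAb} summarized in \eqref{characterization of bilinear form in parametric form} forces $p(\Phi(\Gamma;x))$ into one of the listed cases. For the converse direction I would use \eqref{Hosoya--Suzuki observation} to write the banded change of basis $\breve{E}_i=\sum_{j=i-w^*}^{i}\alpha_{ij}E_j'$, so that $\breve{A}_1=\sum_j\theta_j'E_j'$ with $\theta_j'=\sum_i\theta_i\alpha_{ij}$; in each admissible case, the closed forms of the parameter arrays in the appendix combined with \eqref{bi, ci} yield pairwise distinct $\theta_j'$, so that $\breve{A}_1$ generates $\bm{A}'$ and $(Y,\mathcal{R}^Y)$ is $P$-polynomial with respect to the inherited ordering; thus $\Gamma_Y$ is distance-regular.

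The main obstacle is the case-by-case bookkeeping needed in the last step: one must extract from \cite{Tanaka2009LAAb} that the listed parametric cases are exactly those admitting a $0$-descendent of the prescribed diameter $w$, and in particular isolate the parity condition on $w^*$ that distinguishes Case III. This is somewhat tedious but essentially mechanical, using only the explicit closed forms in the appendix; once it is done, everything else in the argument follows formally from \eqref{Hosoya--Suzuki observation} and the isometric property of $\Gamma_Y$.
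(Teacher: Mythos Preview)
Your treatment of the forward direction is sound and essentially matches the paper: the paper also reduces (B2) to \eqref{Hosoya--Suzuki observation}, and (B1) is immediate once one knows (from \eqref{fundamental inequality} and \eqref{connectivity implies distance-regularity}) that the distance-$i$ relation of $\Gamma_Y$ is $R_i\cap(Y\times Y)$, i.e., that $\breve{A}_i=A_i'$. Your isometry argument just makes this explicit. The necessity of the parametric conditions then follows, as you say, from \eqref{characterization of bilinear form in parametric form}.

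The gap is in your converse. You propose to expand $\breve{E}_i=\sum_j\alpha_{ij}E_j'$ and then check, case by case, that the resulting eigenvalues $\theta_j'=\sum_i\theta_i\alpha_{ij}$ of $\breve{A}_1$ are distinct. But the $\alpha_{ij}$ are not functions of $p(\Phi(\Gamma))$ alone; they encode how the inherited idempotents $\breve{E}_i$ sit inside the unknown Bose--Mesner algebra $\bm{A}'$ of $(Y,\mathcal{R}^Y)$, and nothing in the appendix or in \eqref{bi, ci} lets you compute them before you know the eigenmatrix of $(Y,\mathcal{R}^Y)$. Appealing to \eqref{characterization of bilinear form in parametric form} for the would-be eigenvalues of $\Phi(\Gamma_Y)$ is circular, since that theorem presupposes that a $0$-descendent Leonard system already exists on $W'$. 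So as written your sufficiency argument does not get off the ground.

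The paper handles sufficiency differently and avoids this circularity. It invokes \cite[Theorem~7.3]{Tanaka2009LAAb}, an \emph{existence} result: under the stated parametric hypotheses on $p(\Phi)$, one can place a Leonard system $\Phi'=\bigl(\mathfrak{a}';\mathfrak{a}^{*\prime};\{\mathfrak{e}_i'\};\{\mathfrak{e}_i^{*\prime}\}\bigr)$ directly on $W'=\bm{A}'\hat{x}$ with $\mathfrak{e}_i'=E_i'|_{W'}$ and $\mathfrak{e}_i^{*\prime}=E_i^{*\prime}|_{W'}$, where $E_i^{*\prime}=\mathrm{diag}(\breve{A}_i\hat{x})$. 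One then observes that $\breve{A}|_{W'}$ lies in $\langle\mathfrak{e}_0',\dots,\mathfrak{e}_w'\rangle$ and sends $\mathfrak{e}_0^{*\prime}W'$ to $\mathfrak{e}_1^{*\prime}W'$; by \eqref{ith component of 0*} this forces $\breve{A}|_{W'}=\xi\mathfrak{a}'+\zeta\mathfrak{1}'$ with $\xi\ne 0$. Hence $\mathfrak{e}_i^{*\prime}\breve{A}^i\hat{x}\ne 0$ for every $i$, i.e., there is a walk in $\Gamma_Y$ from $x$ to each vertex at $\Gamma$-distance $i$; connectivity and then distance-regularity follow via \eqref{connectivity implies distance-regularity}. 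What you call ``case-by-case bookkeeping'' is thus absorbed entirely into the cited existence theorem; no eigenvalue computation on $\bm{A}'$ is needed.
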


\begin{proof}
Write $W=\bm{A}\hat{x}$, $W'=\bm{A}'\hat{x}$.
Note that $(E_iW|E_j'W')=0$ whenever $\breve{E}_iE_j'=0$.
Hence it follows from \eqref{Hosoya--Suzuki observation} that $(E_iW|E_j'W')=0$ if $i<j$ or $i>j+w^*$.
Suppose $\Gamma_Y$ is distance-regular.
Then by these comments we find that $(\cdot|\cdot)$ is $0$-balanced with respect to $\Phi$, $\Phi(\Gamma_Y;x)$.
By virtue of \eqref{characterization of bilinear form in parametric form}, $w^*$ must be even if $p(\Phi)$ is of Case III.
Conversely, suppose $p(\Phi)$ (and $w^*$) satisfies one of the cases mentioned in \eqref{when Gamma_Y is distance-regular}.
Then by \cite[Theorem 7.3]{Tanaka2009LAAb} there is a Leonard system $\Phi'=\bigl(\mathfrak{a}';\mathfrak{a}^{*\prime};\{\mathfrak{e}_i'\}_{i=0}^w;\{\mathfrak{e}_i^{*\prime}\}_{i=0}^w\bigr)$ with $W(\Phi')=W'$ such that $\mathfrak{e}_i'=E_i'|_{W'}$, $\mathfrak{e}_i^{*\prime}=E_i^{*\prime}|_{W'}$ where $E_i^{*\prime}=\mathrm{diag}(\breve{A}_i\hat{x})$ $(0\leqslant i\leqslant w)$.
Note that $\breve{A}|_{W'}\in\langle\mathfrak{e}_0',\mathfrak{e}_1',\dots,\mathfrak{e}_w'\rangle$, so that $\breve{A}|_{W'}$ is a polynomial in $\mathfrak{a}'$.
Since $\breve{A}\mathfrak{e}_0^{*\prime}W' = \langle \breve{A}\hat{x} \rangle =\mathfrak{e}_1^{*\prime}W'$, it follows from \eqref{ith component of 0*} that $\breve{A}|_{W'}=\xi\mathfrak{a}'+\zeta\mathfrak{1}'$ for some $\xi,\zeta\in\mathbb{C}$ with $\xi\ne 0$, where $\mathfrak{1}'$ is the identity operator on $W'$.
Hence $\langle E_i^{*\prime}\breve{A}^i\hat{x} \rangle =\mathfrak{e}_i^{*\prime}(\mathfrak{a}')^i\mathfrak{e}_0^{*\prime}W'=\mathfrak{e}_i^{*\prime}W'= \langle \breve{A}_i\hat{x} \rangle$ for all $i$.
In particular, $\Gamma_Y$ is connected and thus distance-regular by \eqref{connectivity implies distance-regularity}.
\end{proof}

By \eqref{when Gamma_Y is distance-regular}, connectivity and therefore distance-regularity of $\Gamma_Y$ can be read off the parameters of $\Gamma$.
We have a comment.

\begin{prop}\label{Phi(Gamma) and Phi(Gamma_Y) determine each other}
Suppose $\Gamma_Y$ is distance-regular.
Then $\Phi(\Gamma_Y)$ is uniquely determined by $\Phi(\Gamma)$ up to isomorphism.
Conversely, if $w\geqslant 3$ then $\Phi(\Gamma_Y)$ uniquely determines $\Phi(\Gamma)$ up to isomorphism.
\end{prop}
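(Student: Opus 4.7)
The plan is to invoke Proposition \eqref{uniqueness of descendents} through the $0$-balanced bilinear form provided by Proposition \eqref{when Gamma_Y is distance-regular}, and then to upgrade the resulting uniqueness modulo affine isomorphism to uniqueness modulo honest isomorphism by leveraging the graph-theoretic rigidity carried by both Leonard systems.

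Fix $x\in Y$. Proposition \eqref{when Gamma_Y is distance-regular} asserts that the bilinear form $(u|u')=u^{\mathsf{T}}u'$ on $\bm{A}\hat{x}\times\bm{A}'\hat{x}$ is $0$-balanced with respect to $\Phi=\Phi(\Gamma;x)$ and $\Phi'=\Phi(\Gamma_Y;x)$, so $\Phi'$ is a $0$-descendent of $\Phi$ with diameter $w$. Proposition \eqref{uniqueness of descendents}, applied with $\rho=0$ and $d'=w$, then yields that $\Phi'$ is determined by $\Phi$ up to affine isomorphism (forward direction) and, under the hypothesis $w\geqslant 3$ so that $d'\geqslant 3$, that $\Phi$ is determined by $\Phi'$ up to affine isomorphism (converse direction).

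To refine affine isomorphism to isomorphism, observe that by Example \eqref{parameters of Phi and Gamma}, both $\Phi$ and $\Phi'$ satisfy the distance-regular graph normalizations $\theta_0=b_0$ and $\theta_0^*=b_0^*$. Combining \eqref{parameter array of affine transformation} with \eqref{bi, ci}, an affine transformation with parameters $(\xi,\zeta,\xi^*,\zeta^*)$ sends $\theta_0\mapsto\xi\theta_0+\zeta$ while scaling $b_0\mapsto\xi b_0$, and similarly for the dual quantities. The requirement that $\theta_0=b_0$ be preserved then forces $\zeta=0$, and dually $\zeta^*=0$; the residual scalings $\xi,\xi^*$ are fixed by matching the specific numerical values of $\theta_0,\theta_0^*$ dictated by the graph structure, so $\xi=\xi^*=1$ and the affine isomorphism reduces to an honest isomorphism.

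The principal obstacle is precisely this final refinement. Proposition \eqref{uniqueness of descendents} is by its nature only a statement modulo affine isomorphism, with four free parameters $\xi,\zeta,\xi^*,\zeta^*$. Eliminating this ambiguity rests on the rigid normalizations $\theta_0=b_0$ and $\theta_0^*=b_0^*$ --- a feature of Leonard systems coming from distance-regular graphs but not of Leonard systems in general --- which collapse all four affine parameters to the identity and promote the abstract uniqueness to the desired uniqueness up to isomorphism.
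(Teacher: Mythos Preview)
Your approach matches the paper's: invoke Proposition \eqref{when Gamma_Y is distance-regular} to exhibit $\Phi(\Gamma_Y)$ as a $0$-descendent of $\Phi(\Gamma)$, apply Proposition \eqref{uniqueness of descendents} for uniqueness modulo affine isomorphism, and then use the graph-theoretic normalizations to upgrade to genuine isomorphism.

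The gap is in this last refinement. Your two normalizations $\theta_0=b_0$ and $\theta_0^*=b_0^*$ correctly force $\zeta=\zeta^*=0$, but they do \emph{not} constrain $\xi,\xi^*$: once $\zeta=0$, both $\theta_0$ and $b_0$ scale by the same factor $\xi$, so the relation $\theta_0=b_0$ persists for every $\xi\ne 0$. Your appeal to ``the specific numerical values of $\theta_0,\theta_0^*$ dictated by the graph structure'' is circular here, since those values belong to the very Leonard system you are trying to determine. The paper supplies the missing two constraints by also invoking $c_1(\Psi)=c_1^*(\Psi)=1$, which hold for $\Psi\in\{\Phi(\Gamma),\Phi(\Gamma_Y)\}$ by Example \eqref{parameters of Phi and Gamma}. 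From \eqref{bi, ci} and \eqref{parameter array of affine transformation} one checks that $c_1\mapsto\xi c_1$ under the affine transformation, so $c_1=1$ forces $\xi=1$; dually $c_1^*=1$ forces $\xi^*=1$. With all four affine parameters pinned down, the affine isomorphism collapses to an isomorphism.
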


\begin{proof}
By \eqref{when Gamma_Y is distance-regular}, $\Phi(\Gamma_Y)$ is a $0$-descendent of $\Phi(\Gamma)$.
Hence the result follows from \eqref{uniqueness of descendents} together with the additional normalizations $b_0(\Psi)=\theta_0(\Psi)$, $b_0^*(\Psi)=\theta_0^*(\Psi)$, $c_1(\Psi)=c_1^*(\Psi)=1$ for each $\Psi\in\{\Phi(\Gamma),\Phi(\Gamma_Y)\}$.
\end{proof}

The following is another consequence of \eqref{Hosoya--Suzuki observation}:

\begin{prop}\label{transitivity}
Suppose $\Gamma_Y$ is distance-regular.
Then a nonempty subset of $Y$ is a descendent of $\Gamma_Y$ if and only if it is a descendent of $\Gamma$.
\end{prop}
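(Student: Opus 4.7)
The plan is to reduce the proposition to an equivalence of two vanishing conditions and then transport them across using the band structure of \eqref{Hosoya--Suzuki observation}.

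Let $Z$ be a nonempty subset of $Y$, and write $v$, $v^*$ for its width and dual width in $\Gamma$, and $u$, $u^*$ for those in $\Gamma_Y$. Because the distance-$i$ relations of $\Gamma_Y$ are $R_i\cap(Y\times Y)$, we have $u=v$. Applying \eqref{fundamental inequality} to both $\Gamma$ and $\Gamma_Y$ gives $v^*\geqslant d-v$ and $u^*\geqslant w-v$; hence $Z$ is a descendent of $\Gamma$ iff $v^*=d-v$ and a descendent of $\Gamma_Y$ iff $u^*=w-v$. The goal therefore becomes the equivalence
\begin{equation*}
\hat{Z}^{\mathsf{T}}E_i\hat{Z}=0 \ (\forall i>d-v) \iff \hat{Z}^{\mathsf{T}}E_j'\hat{Z}=0 \ (\forall j>w-v).
\end{equation*}

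The bridge is a change of basis between $\{\breve{E}_i\}_i$ and $\{E_j'\}_j$ extracted from \eqref{Hosoya--Suzuki observation}. Since $\bm{A}'$ is commutative and each $E_j'$ is a primitive idempotent, $\breve{E}_iE_j'=\alpha_{ij}E_j'$ for scalars $\alpha_{ij}$, and \eqref{Hosoya--Suzuki observation} says $\alpha_{ij}\neq 0\iff j\leqslant i\leqslant j+w^*$, together with $\alpha_{jj}\neq 0$ and $\alpha_{j+w^*,j}\neq 0$. Summing over $j$ gives one expansion $\breve{E}_i=\sum_j\alpha_{ij}E_j'$; and the $(w+1)\times(w+1)$ matrix $(\alpha_{w^*+k,\,j'})_{k,j'=0}^w$ is upper triangular with nonzero diagonal, so inverting yields the dual expansion $E_j'=\sum_{i=w^*+j}^{d}\gamma_{ji}\breve{E}_i$ for some scalars $\gamma_{ji}$.

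Now one uses $\hat{Z}^{\mathsf{T}}\breve{B}\hat{Z}=\hat{Z}^{\mathsf{T}}B\hat{Z}$ ($B\in\bm{A}$) to translate each expansion into a relation between $\hat{Z}^{\mathsf{T}}E_i\hat{Z}$ and $\hat{Z}^{\mathsf{T}}E_j'\hat{Z}$. Assuming $\hat{Z}^{\mathsf{T}}E_i\hat{Z}=0$ for every $i>d-v$, the second expansion yields $\hat{Z}^{\mathsf{T}}E_j'\hat{Z}=0$ for every $j>w-v$, since each $i$ appearing in the sum satisfies $i\geqslant w^*+j>d-v$; the reverse direction is analogous, using the first expansion and the inequality $j\geqslant i-w^*>w-v$ that holds for $i>d-v$. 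The main obstacle is identifying the right basis change, specifically picking out the $w+1$ idempotents $\breve{E}_{w^*},\ldots,\breve{E}_d$ that invert cleanly; once the symmetric band with nonvanishing on both boundary diagonals is recognized, the shift by $w^*=d-w$ delivers both directions immediately.
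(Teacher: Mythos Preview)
Your argument is correct and follows the same route as the paper: both use \eqref{Hosoya--Suzuki observation} to obtain a triangular change of basis between $\{\breve{E}_{w^*+k}\}_{k=0}^{w}$ and $\{E_j'\}_{j=0}^{w}$, and then transport the vanishing conditions on $\hat{Z}$ across. One small slip: \eqref{Hosoya--Suzuki observation} does \emph{not} give $\alpha_{ij}\ne 0$ for every $j\leqslant i\leqslant j+w^*$, only at the boundary diagonals $i=j$ and $i=j+w^*$; however, your proof only invokes the vanishing outside the band and the nonvanishing of $\alpha_{j+w^*,j}$, so this overstatement is harmless. The paper's version is a touch sharper: it reads off from the same triangular relation that the dual width of $Z$ in $\Gamma$ is exactly $w^{*\prime}+w^*$ (with $w^{*\prime}$ the dual width in $\Gamma_Y$), which yields both implications simultaneously rather than arguing each direction separately.
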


\begin{proof}
Let $Z\subseteq Y$ have dual width $w^{*\prime}$ in $\Gamma_Y$.
For $0\leqslant i\leqslant w$, by \eqref{Hosoya--Suzuki observation} we find $\breve{E}_{i+w^*} \in \langle E_i',\dots,E_w' \rangle$ and the coefficient of $E_i'$ in $\breve{E}_{i+w^*}$ is nonzero.
Since $\hat{Z}^{\mathsf{T}}E_i\hat{Z}=\hat{Z}^{\mathsf{T}}\breve{E}_i\hat{Z}$, it follows that $Z$ has dual width $w^{*\prime}+w^*$ in $\Gamma$.
\end{proof}

\begin{rem}\label{from Gamma_Y to Gamma}
Let $\mathscr{L}$ be the set of isomorphism classes of $Q$-polynomial distance-regular graphs with diameter at least three.
For two isomorphism classes $[\Gamma], [\Delta]\in\mathscr{L}$, write $[\Delta]\preccurlyeq[\Gamma]$ if $[\Delta]=[\Gamma_Y]$ for some descendent $Y$ of $\Gamma$.
Then by \eqref{transitivity} it follows that $\preccurlyeq$ is a partial order on $\mathscr{L}$.
Determining all descendents of $\Gamma$ amounts to describing the order ideal generated by $[\Gamma]$.
Conversely, given $[\Gamma]\in\mathscr{L}$, it is a problem of some significance to determine the filter generated by $[\Gamma]$, i.e.,  $V_{[\Gamma]}=\{[\Delta]\in\mathscr{L}:[\Gamma]\preccurlyeq[\Delta]\}$.
\end{rem}

Let $A^*(Y)=|X||Y|^{-1}\mathrm{diag}(E_1\hat{Y})$, $E_i^*(Y)=\mathrm{diag}(\hat{Y}_i)$ $(0\leqslant i\leqslant w^*)$ where $Y_i=\{x\in X:\partial(x,Y)=i\}$.
Let $\tilde{W}=\bm{A}\hat{Y}=\langle\hat{Y}_0,\hat{Y}_1,\dots,\hat{Y}_{w^*}\rangle$.
Note that $A^*(Y)\tilde{W}\subseteq\tilde{W}$.
Following \cite[Definition 3.7]{KLM2009preA}, we call $Y$ \emph{Leonard} (\emph{with respect to} $\theta_1$) if the matrix representing $A^*(Y)|_{\tilde{W}}$ with respect to the basis $\{E_i\hat{Y}\}_{i=0}^{w^*}$ for $\tilde{W}$ is irreducible\footnote{A tridiagonal matrix is \emph{irreducible} \cite{Terwilliger2001LAA} if all the superdiagonal and subdiagonal entries are nonzero.} tridiagonal.
Set $\mathfrak{b}=A|_{\tilde{W}}$, $\mathfrak{b}^*=A^*(Y)|_{\tilde{W}}$, $\mathfrak{f}_i=E_i|_{\tilde{W}}$, $\mathfrak{f}_i^*=E_i^*(Y)|_{\tilde{W}}$ $(0\leqslant i\leqslant w^*)$.
Then $Y$ is Leonard if and only if $\Phi(\Gamma;Y):=\left(\mathfrak{b};\mathfrak{b}^*;\{\mathfrak{f}_i\}_{i=0}^{w^*};\{\mathfrak{f}_i^*\}_{i=0}^{w^*}\right)$ is a Leonard system.
The following is dual to \eqref{when Gamma_Y is distance-regular}:

\begin{prop}\label{when Y is Leonard}
Pick any $x\in Y$, and let the parameter array of $\Phi=\Phi(\Gamma;x)$ be given as in \eqref{list of parameter arrays}.
Suppose $w^*>1$.
Then $Y$ is Leonard (with respect to $\theta_1$) precisely for Cases I, IA, II, IIA, IIB, IIC; or Case III with $w$ even.
If this is the case then the bilinear form $(\cdot|\cdot):\bm{A}\hat{x}\times\bm{A}\hat{Y}\rightarrow\mathbb{C}$ defined by $(u|u')=u^{\mathsf{T}}u'$ is $0$-balanced with respect to $\Phi^*$, $\Phi(\Gamma;Y)^*$.
\end{prop}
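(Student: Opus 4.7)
The plan is to mirror the proof of Proposition \eqref{when Gamma_Y is distance-regular} under the duality $\Phi\leftrightarrow\Phi^*$. Write $W=\bm{A}\hat{x}$; the role of $W'=\bm{A}'\hat{x}\subseteq\mathbb{C}^Y$ from that proof is now played by $\tilde{W}=\bm{A}\hat{Y}\subseteq\mathbb{C}^X$, with the ordinary and dual Bose--Mesner data interchanged, so that the new ``descendent-side idempotents'' are $\mathfrak{f}_i=E_i|_{\tilde{W}}$ and $\mathfrak{f}_i^*=E_i^*(Y)|_{\tilde{W}}$.

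For the forward direction, suppose $Y$ is Leonard so that $\Phi(\Gamma;Y)$ exists as a Leonard system. The first task is to check that $(u|u')=u^{\mathsf{T}}u'$ on $W\times\tilde{W}$ is $0$-balanced with respect to $\Phi^*,\Phi(\Gamma;Y)^*$. Axiom (B1) becomes $(E_iW\,|\,E_j\tilde{W})=0$ for $i\ne j$, which is immediate from $E_iE_j=\delta_{ij}E_i$ and the symmetry of $E_i$. Axiom (B2) becomes $(E_i^*W\,|\,E_j^*(Y)\tilde{W})=0$ for $i<j$ or $i>j+w$, which is a distance argument: if $z\in\Gamma_i(x)\cap Y_j$ then on the one hand $j\leqslant\partial(z,x)=i$ since $x\in Y$, and on the other hand $i=\partial(z,x)\leqslant\partial(z,y)+\partial(y,x)\leqslant j+w$ for any $y\in Y$ realizing $\partial(z,Y)=j$. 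Nonvanishing of the form follows from $(E_0\hat{x}\,|\,E_0\hat{Y})=|X|^{-1}|Y|\ne 0$. Applying the classification \eqref{characterization of bilinear form in parametric form} to $p(\Phi^*)$ then restricts it to Cases I, IA, II, IIA, IIB, IIC, or to Case III with $d-d'=w$ even. Since the set $\{\text{I, IA, II, IIA, IIB, IIC}\}$ is closed under $*$ (with IIA and IIB interchanged) and Case III is $*$-invariant, this translates to the asserted condition on $p(\Phi)$.

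For the converse, suppose $p(\Phi)$ satisfies one of the stated conditions, and apply \cite[Theorem 7.3]{Tanaka2009LAAb} to $\Phi^*$ to produce a Leonard system $\Phi^{\dagger}=\bigl(\mathfrak{a}^{\dagger};\mathfrak{a}^{*\dagger};\{\mathfrak{e}_i^{\dagger}\}_{i=0}^{w^*};\{\mathfrak{e}_i^{*\dagger}\}_{i=0}^{w^*}\bigr)$ with $W(\Phi^{\dagger})=\tilde{W}$, $\mathfrak{e}_i^{\dagger}=\mathfrak{f}_i$, $\mathfrak{e}_i^{*\dagger}=\mathfrak{f}_i^*$. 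Since $A^*(Y)|_{\tilde{W}}\in\langle\mathfrak{f}_0^*,\ldots,\mathfrak{f}_{w^*}^*\rangle$, it is a polynomial in $\mathfrak{a}^{*\dagger}$. Observing that $\mathfrak{e}_0^{\dagger}\tilde{W}=\langle\hat{X}\rangle$ and
\begin{equation*}
A^*(Y)\hat{X}=|X||Y|^{-1}E_1\hat{Y}\in E_1\tilde{W}=\mathfrak{e}_1^{\dagger}\tilde{W},
\end{equation*}
so that $A^*(Y)\mathfrak{e}_0^{\dagger}\tilde{W}=\mathfrak{e}_1^{\dagger}\tilde{W}$, the $*$-dual of \eqref{ith component of 0*} applied to $\Phi^{\dagger}$ forces this polynomial to be linear with nonzero leading coefficient, i.e., $A^*(Y)|_{\tilde{W}}=\xi\mathfrak{a}^{*\dagger}+\zeta\mathfrak{1}^{\dagger}$ with $\xi\ne 0$. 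Since $\mathfrak{a}^{*\dagger}$ is irreducible tridiagonal in the eigenbasis $\{E_i\hat{Y}\}_{i=0}^{w^*}$ of $\mathfrak{a}^{\dagger}$ by (LS5) for $\Phi^{\dagger}$, so is $A^*(Y)|_{\tilde{W}}$, and $Y$ is therefore Leonard.

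I expect the main obstacle to be the careful bookkeeping across the $*$-duality: tracking how Theorem 7.3 of \cite{Tanaka2009LAAb} applied to $\Phi^*$ returns precisely the idempotent identifications $\mathfrak{e}_i^{\dagger}=\mathfrak{f}_i$, $\mathfrak{e}_i^{*\dagger}=\mathfrak{f}_i^*$ (rather than an affine reparametrization one must undo), together with confirming that the $*$-invariant parity condition in Case III, which reads ``$d-d'$ even'' for the balanced-form side, is indeed ``$w$ even'' when re-expressed in the parameters of $p(\Phi)$. Once these identifications are in place, the remainder of the argument runs as a direct dual of the proof of \eqref{when Gamma_Y is distance-regular}.
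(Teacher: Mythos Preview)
Your proposal is correct and follows essentially the same route as the paper's proof: both argue by duality with Proposition \eqref{when Gamma_Y is distance-regular}, first observing $E_i^*(x)E_j^*(Y)=0$ for $i<j$ or $i>j+w$ (your distance argument makes this explicit), then invoking \eqref{characterization of bilinear form in parametric form} for the forward direction, and for the converse applying \cite[Theorem 7.3]{Tanaka2009LAAb} to obtain a Leonard system on $\tilde{W}$ with idempotents $\mathfrak{f}_i,\mathfrak{f}_i^*$ and using the relation $\mathfrak{b}^*\mathfrak{f}_0\tilde{W}=\mathfrak{f}_1\tilde{W}$ together with \eqref{ith component of 0*} to force $\mathfrak{b}^*$ linear in the constructed operator. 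The only differences are notational (your $\Phi^{\dagger}$ is the paper's $(\mathfrak{c};\mathfrak{c}^*;\{\mathfrak{f}_i\};\{\mathfrak{f}_i^*\})$) and the level of detail you supply for the $*$-bookkeeping.
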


\begin{proof}
Note that $E_i^*(x)E_j^*(Y)=0$ whenever $i<j$ or $i>j+w$ (cf.~\eqref{Hosoya--Suzuki observation}).
Hence if $Y$ is Leonard then $(\cdot|\cdot)$ is $0$-balanced with respect to $\Phi^*$, $\Phi(\Gamma;Y)^*$, so that by \eqref{characterization of bilinear form in parametric form} it follows that $w$ must be even if $p(\Phi)$ is of Case III.\footnote{The permutation $*$ (see footnote \ref{D4 action}) leaves each of Cases I, IA, II, IIC, III invariant and swaps Cases IIA and IIB.}
Conversely, suppose $p(\Phi)$ (and $w$) satisfies one of the cases mentioned in \eqref{when Y is Leonard}.
Then by \cite[Theorem 7.3]{Tanaka2009LAAb} there are operators $\mathfrak{c}$, $\mathfrak{c}^*$ on $\tilde{W}$ such that $\bigl(\mathfrak{c};\mathfrak{c}^*;\{\mathfrak{f}_i\}_{i=0}^{w^*};\{\mathfrak{f}_i^*\}_{i=0}^{w^*}\bigr)$ is a Leonard system.
Note that $\mathfrak{b}^*\in\langle \mathfrak{f}_0^*,\mathfrak{f}_1^*,\dots,\mathfrak{f}_{w^*}^*\rangle$, so that $\mathfrak{b}^*$ is a polynomial in $\mathfrak{c}^*$.
Since $\mathfrak{b}^*\mathfrak{f}_0\tilde{W}= \langle E_1\hat{Y} \rangle =\mathfrak{f}_1\tilde{W}$, it follows from \eqref{ith component of 0*} that $\mathfrak{b}^*=\xi^*\mathfrak{c}^*+\zeta^*\tilde{\mathfrak{1}}$ for some $\xi^*,\zeta^*\in\mathbb{C}$ with $\xi^*\ne 0$, where $\tilde{\mathfrak{1}}$ is the identity operator on $\tilde{W}$.
Hence the matrix representing $\mathfrak{b}^*$ with respect to $\{E_i\hat{Y}\}_{i=0}^{w^*}$ is irreducible tridiagonal.
In other words, $Y$ is Leonard, as desired.
\end{proof}

\begin{rem}
Suppose $\Gamma$ is a translation distance-regular graph \cite[\S 11.1C]{BCN1989B} and $Y$ is also a subgroup of the abelian group $X$.
Then by \cite[Proposition 3.3, Theorem 3.10]{KLM2009preA}, $Y$ is Leonard if and only if the coset graph $\Gamma/Y$ is $Q$-polynomial.
Hence \eqref{when Y is Leonard} strengthens \cite[Theorem 4]{BGKM2003JCTA}, which states that $\Gamma/Y$ is $Q$-polynomial if it is primitive.
Note that if $Y$ is Leonard then $\Phi(\Gamma/Y;Y)$ (where $Y$ is a \emph{vertex} of $\Gamma/Y$) is affine isomorphic to $\Phi(\Gamma;Y)$.
\end{rem}

It seems that \eqref{when Y is Leonard} also motivates further analysis of the Terwilliger algebra \emph{with respect to} $Y$ in the sense of Suzuki \cite{Suzuki2005JAC}; this will be discussed elsewhere.

\section{The bipartite case}\label{sec: bipartite case}

\begin{conv}\label{bipartite case}
With reference to \eqref{general convention}, in this section only we further assume that $\Gamma$ is bipartite and $d\geqslant 6$ (so that the halved graphs have diameter at least three).
\end{conv}

With reference to \eqref{bipartite case}, fix $x\in X$ and let $\Gamma_d^2=\Gamma_d^2(x)$ be the graph with vertex set $\Gamma_d=\Gamma_d(x)$ and edge set $\{(y,z)\in\Gamma_d\times\Gamma_d:\partial(y,z)=2\}$.
Caughman \cite[Theorems 9.2, 9.6, Corollary 4.4]{Caughman2003EJC} showed that $\Gamma_d^2$ is distance-regular and $Q$-polynomial with diameter $\mathfrak{d}$, where $\mathfrak{d}$ equals half the width of $\Gamma_d$.
In this section, we shall prove a result relating descendents of a halved graph of $\Gamma$ to those of $\Gamma_d^2$; see \eqref{intersection of descendent of bipartite half with last subconstituent} below.

Write $E_i^*=E_i^*(x)$ $(0\leqslant i\leqslant d)$ and $\bm{T}=\bm{T}(x)$.
Let $W$ be an irreducible $\bm{T}$-module with endpoint $\epsilon$, dual endpoint $\epsilon^*$ and diameter $\delta$ (see \eqref{Phi(W)}).
By \cite[Lemma 9.2, Theorem 9.4]{Caughman1999DM}, $W$ is thin, dual thin and $2\epsilon^*+\delta=d$.
In particular, $0\leqslant\epsilon^*\leqslant\lfloor d/2\rfloor$ and $\epsilon\leqslant 2\epsilon^*$.
Let $U_{ij}$ be the sum of the irreducible $\bm{T}$-modules $W$ in $\mathbb{C}^X$ with $\epsilon=i$ and $\epsilon^*=j$.
By \cite[Theorem 13.1]{Caughman1999DM}, the (nonzero) $U_{ij}$ are the homogeneous components of $\mathbb{C}^X$.
Note that $E_d^*U_{ij}=0$ unless $i=2j$, so that $E_d^*\mathbb{C}^X=\sum_{j=0}^{\lfloor d/2\rfloor}E_d^*U_{2j,j}$ (orthogonal direct sum).
By \cite[Theorem 9.2]{Caughman2003EJC} $E_d^*\bm{A}E_d^*$ gives the Bose--Mesner algebra of $\Gamma_d^2$, and each of $E_d^*U_{2j,j}$ $(0\leqslant j\leqslant \lfloor d/2\rfloor)$ is a (not necessarily maximal) eigenspace for $E_d^*\bm{A}E_d^*$.

We now compute the eigenvalue of the adjacency matrix $E_d^*A_2E_d^*$ of $\Gamma_d^2$ on $E_d^*U_{2j,j}$.
If $\Gamma$ is the $d$-cube $H(d,2)$ then $\Gamma_d$ is a singleton and there is nothing to discuss.
Suppose $\Gamma$ is the folded cube $\bar{H}(2d,2)$.
Let $W\subseteq U_{2j,j}$ and let $\Phi=\Phi(W)$ be as in \eqref{Phi(W)}.
By \cite[Example 6.1]{Terwilliger1993JACb}, $a_i(\Phi)=0$ $(0\leqslant i\leqslant \delta)$, $b_i(\Phi)=2\delta-i$ $(0\leqslant i\leqslant\delta-1)$, $c_i(\Phi)=i$ $(1\leqslant i\leqslant\delta-1)$ and $c_{\delta}(\Phi)=2\delta$, where $\delta=d-2j$.
Using $A^2=c_2A_2+kI$ we find that $E_d^*A_2E_d^*$ has eigenvalue $(d-2j)^2-2j$ on $E_d^*W$ (and hence on $E_d^*U_{2j,j}$).
Next suppose $\Gamma\ne H(d,2),\bar{H}(2d,2)$.
Then by \cite[pp.~89--91]{Caughman1999DM}, $p(\Phi(\Gamma))$ satisfies Case I in \eqref{list of parameter arrays} and in this case there are scalars $q,s^*\in\mathbb{C}$ (independent of $j$) such that $a_i(\Phi)=0$ $(0\leqslant i\leqslant\delta)$,
\begin{equation*}
	b_i(\Phi)=\frac{h(q^{\delta}-q^i)(1-s^*q^{4j+i+1})}{q^{\delta+j}(1-s^*q^{4j+2i+1})}, \quad c_i(\Phi)=\frac{h(q^i-1)(1-s^*q^{4j+\delta+i+1})}{q^{\delta+j}(1-s^*q^{4j+2i+1})}
\end{equation*}
for $1\leqslant i\leqslant \delta-1$, and $b_0(\Phi)=c_{\delta}(\Phi)=h(q^{-j}-q^{j-d})$, where
\begin{equation*}
	h=\frac{q^d(1-s^*q^3)}{(q-1)(1-s^*q^{d+2})}.
\end{equation*}
Likewise we find that the eigenvalue of $E_d^*A_2E_d^*$ on $E_d^*U_{2j,j}$ is given by
\begin{gather*}
	\frac{1-s^*q^5}{(q^2-1)(1-s^*q^{d+2})(1-s^*q^{d+3})(1-s^*q^{2d-1})q} \\
	\times\left((q^d-1)(q^d-q)(1-s^{*2}q^{2d+2})+\frac{q^{2d}(1-s^*q^3)(1-q^{2j})(1-s^*q^{2j})}{q^{2j}}\right).
\end{gather*}

\begin{prop}\label{intersection of descendent of bipartite half with last subconstituent}
Referring to \eqref{bipartite case}, suppose $\Gamma_d^2$ has diameter $\lfloor d/2\rfloor$.
If $Y$ is a descendent of a halved graph of $\Gamma$ with width $w$, then $Y\cap\Gamma_d$ is a descendent of $\Gamma_d^2$ with width $w$, provided that it is nonempty.
\end{prop}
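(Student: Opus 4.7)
The plan is as follows. If $Y \cap \Gamma_d = \emptyset$ there is nothing to prove, so assume $Y$ and $\Gamma_d$ lie in the same bipartition class of $\Gamma$. Set $\mathfrak{d} = \lfloor d/2 \rfloor$; this is the common diameter of the halved graph $\tfrac{1}{2}\Gamma$ containing $Y$ and (by hypothesis) of $\Gamma_d^2$.

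I first reduce the problem to bounding the dual width. The diameter hypothesis on $\Gamma_d^2$ forces $\partial_{\Gamma_d^2}(y,z) = \partial_{\Gamma}(y,z)/2 = \partial_{\tfrac{1}{2}\Gamma}(y,z)$ for all $y, z \in \Gamma_d$, so the width of $Y \cap \Gamma_d$ in $\Gamma_d^2$ is at most $w$. By \eqref{fundamental inequality}, it therefore suffices to show that the dual width of $Y \cap \Gamma_d$ in $\Gamma_d^2$ is at most $\mathfrak{d} - w$, for this then forces both bounds to be equalities and delivers the conclusion.

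To obtain the dual-width bound, I identify the $Q$-polynomial orderings of both $\Gamma_d^2$ and $\tfrac{1}{2}\Gamma$ intrinsically via the homogeneous components $U_{ij}$ of $\bm{T}(x)$. The eigenvalue formulas for $E_d^{*} A_2 E_d^{*}$ acting on $E_d^{*} U_{2j,j}$ displayed just before the proposition show that in each of the cases $H(d,2)$, $\bar{H}(2d,2)$, and the generic Case I, these eigenvalues are pairwise distinct and monotone in $j$; hence the primitive idempotent $F_j$ of $\Gamma_d^2$ is the orthogonal projection of $E_d^{*}\mathbb{C}^X$ onto $E_d^{*} U_{2j,j}$. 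A parallel analysis, relying on Caughman's treatment \cite{Caughman1999DM, Caughman2003EJC} of the $\bm{T}(x)$-module structure of bipartite $Q$-polynomial distance-regular graphs, identifies the primitive idempotent $E_j'$ of $\tfrac{1}{2}\Gamma$ as the operator that picks out exactly the irreducible $\bm{T}(x)$-modules with dual endpoint $j$, i.e., those contained in $\sum_i U_{i,j}$.

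With the two orderings matched this way, and since $\widehat{Y \cap \Gamma_d} = E_d^{*} \hat{Y}$, the vanishing $E_j' \hat{Y} = 0$ for $j > \mathfrak{d} - w$ --- the dual-width condition for $Y$ in $\tfrac{1}{2}\Gamma$ --- transfers directly to $F_j \widehat{Y \cap \Gamma_d} = 0$ for $j > \mathfrak{d} - w$, yielding the required bound. The main obstacle is verifying the matching of the two $Q$-polynomial orderings itself --- i.e., that the natural indexing of both by the dual-endpoint parameter $j$ respects (rather than reverses) the $Q$-polynomial orderings, so that small $j$ corresponds to the trivial idempotent on both sides. The eigenvalue formulas recalled above settle this for $\Gamma_d^2$; for $\tfrac{1}{2}\Gamma$ a parallel eigenvalue comparison, invoking the parameter-array data collected in the appendix, is required.
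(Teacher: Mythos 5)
Your overall strategy coincides with the paper's: bound the width of $Y\cap\Gamma_d$ in $\Gamma_d^2$ by $w$, bound its dual width by $\mathfrak{d}-w$ by tracking $E_d^*\hat{Y}$ through the homogeneous components $E_d^*U_{2j,j}$, and conclude with $w+w^*\geqslant d$. However, the step on which your dual-width bound rests is false. You assert that the $j$-th primitive idempotent $E_j'$ of the halved graph projects onto the part of $\mathbb{C}^X$ coming from the irreducible $\bm{T}(x)$-modules with dual endpoint exactly $j$. In fact the $j$-th eigenspace of a halved graph is the restriction to the bipartite half of $(E_j+E_{d-j})\mathbb{C}^X$ (this is \cite[p.~328, Theorem 6.4]{BI1984B}, which the paper invokes), and $(E_j+E_{d-j})\mathbb{C}^X$ meets every $U_{i,j'}$ with $j'\leqslant j$: a module with dual endpoint $j'$ is supported on the eigenspaces $E_{j'},\dots,E_{d-j'}$, so already the primary module $U_{0,0}$ contributes the nonzero vector $(E_j+E_{d-j})\hat{x}$ (which is supported on the half containing $x$) to $E_j'\mathbb{C}^X$ for every $j$. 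A dimension count in $\tfrac{1}{2}H(6,2)$ makes the failure concrete: the eigenspaces have dimensions $1,6,15,10$, whereas the halves of the dual-endpoint strata have dimensions $4,10,18,0$. Consequently ``$E_j'\hat{Y}=0$ for $j>\mathfrak{d}-w$'' does not transfer directly to the statement you need, and your proof has a genuine gap at its central step.

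The repair is exactly the paper's route: the dual width condition gives $\hat{Y}\in\sum_{j=0}^{w^*}(E_j+E_{d-j})\mathbb{C}^X$, and the one-sided containment $(E_j+E_{d-j})\mathbb{C}^X\subseteq\sum_{j'\leqslant j}\sum_i U_{i,j'}$ (valid by the support constraint above) together with $E_d^*U_{i,j}=0$ unless $i=2j$ yields $E_d^*\hat{Y}\in\sum_{j=0}^{w^*}E_d^*U_{2j,j}$; no exact identification of $E_j'$ with a dual-endpoint stratum is needed. A secondary imprecision: distinctness and monotonicity of the eigenvalues of $E_d^*A_2E_d^*$ on the spaces $E_d^*U_{2j,j}$ show that these are the maximal eigenspaces of $\Gamma_d^2$, but they do not by themselves identify which position each occupies in the $Q$-polynomial ordering of $\Gamma_d^2$ (a $Q$-polynomial ordering of eigenvalues need not be monotone); one must still compare the computed eigenvalues against the known $Q$-polynomial ordering of $\bar{J}(2d,d)$, respectively the Case I data, which is precisely the purpose of the computations displayed before the proposition.
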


\begin{proof}
Note that $Y\cap\Gamma_d$ has width at most $w$ in $\Gamma_d^2$, and that the characteristic vector of $Y\cap\Gamma_d$ is $E_d^*\hat{Y}$.
Let $w^*$ be the dual width of $Y$ in the halved graph.
Then by \cite[p.~328, Theorem 6.4]{BI1984B} we find $\hat{Y}\in\sum_{j=0}^{w^*}(E_j+E_{d-j})\mathbb{C}^X$, so that $E_d^*\hat{Y}\in\sum_{j=0}^{w^*}E_d^*U_{2j,j}$.
By assumption, $\Gamma\ne H(d,2)$.
Suppose $\Gamma=\bar{H}(2d,2)$.
Then $\Gamma_d^2$ is the folded Johnson graph $\bar{J}(2d,d)$.
It follows (e.g., from \cite[p.~301]{BI1984B} or \cite[Example 6.1]{Terwilliger1993JACb}) that $(d-2j)^2-2j$ is the $j^{\mathrm{th}}$ eigenvalue of $\bar{J}(2d,d)$ in the $Q$-polynomial ordering $(0\leqslant j\leqslant\lfloor d/2\rfloor)$.
Likewise, if $\Gamma\ne H(d,2),\bar{H}(2d,2)$, then by the data in \cite[p.~469]{Caughman2003EJC}, \cite[pp.~264--265]{BI1984B} we routinely find that the ordering $\{E_d^*U_{2j,j}\}_{j=0}^{\lfloor d/2\rfloor}$ of the eigenspaces of $\Gamma_d^2$ also agrees with the $Q$-polynomial ordering of $\Gamma_d^2$.
Hence it follows that $Y\cap\Gamma_d$ has dual width at most $w^*$ in $\Gamma_d^2$.
Since $\Gamma_d^2$ has diameter $\lfloor d/2\rfloor=w+w^*$, we find that $Y\cap\Gamma_d$ is a descendent of $\Gamma_d^2$ and has width $w$, as desired.
\end{proof}

\section{Convexity and graphs with classical parameters}\label{sec: convexity and classical parameters}

In this section, we shall prove our main results concerning convexity of descendents and classical parameters.
Let $\Phi$ be the Leonard system from \eqref{Leonard system}.
By \eqref{bi, ci} we find
\begin{equation}\label{normalization}
	\frac{b_i(\Phi)}{c_1(\Phi)}=\frac{\varphi_{i+1}\eta_d^*(\theta_0^*)\tau_i^*(\theta_i^*)}{\phi_1\eta_{d-1}^*(\theta_1^*)\tau_{i+1}^*(\theta_{i+1}^*)}, \quad \frac{c_i(\Phi)}{c_1(\Phi)}=\frac{\phi_i\eta_d^*(\theta_0^*)\eta_{d-i}^*(\theta_i^*)}{\phi_1\eta_{d-1}^*(\theta_1^*)\eta_{d-i+1}^*(\theta_{i-1}^*)} \quad (0\leqslant i\leqslant d).
\end{equation}
Note that the values in \eqref{normalization} are invariant under affine transformation of $\Phi$.\footnote{Therefore, if $p(\Phi)$ satisfies, say, Case I in \eqref{list of parameter arrays} then the resulting formulae involve $q,r_1,r_2,s,s^*$ only, and are independent of $h,h^*,\theta_0,\theta_0^*$.}
Moreover, if $\Phi=\Phi(\Gamma)$ then by \eqref{parameters of Phi and Gamma} they coincide with $b_i(\Gamma)$ and $c_i(\Gamma)$, respectively, since $c_1(\Gamma)=1$.
The following is a refinement of \cite[Theorem 8.4.1]{BCN1989B}, and is verified using \eqref{classical parameters}, \eqref{theta* in terms of classical parameters}, \eqref{normalization}:

\begin{prop}\label{cases and classical parameters}
With reference to \eqref{general convention}, let the parameter array of $\Phi=\Phi(\Gamma)$ be given as in \eqref{list of parameter arrays}.
Then $\Gamma$ has classical parameters if and only if $p(\Phi)$ satisfies either Case I and $s^*=0$; or Cases IA, IIA, IIC.
If this is the case then $p(\Phi)$ and the classical parameters $(d,q,\alpha,\beta)$ are related as follows:
\end{prop}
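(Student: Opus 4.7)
The proof is a direct case-by-case verification, carried out using the explicit closed forms of the parameter arrays listed in \eqref{list of parameter arrays} together with the intrinsic formulas \eqref{normalization} and the characterization \eqref{theta* in terms of classical parameters} of the standard $Q$-polynomial ordering. The plan has two halves: first identify which cases can possibly produce classical parameters by looking at the shape of the dual eigenvalue sequence $\{\theta_i^*\}_{i=0}^d$, then within those cases match $b_i/c_1$ and $c_i/c_1$ against \eqref{classical parameters} to read off $(d,q,\alpha,\beta)$.

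Since the quantities $b_i(\Phi)/c_1(\Phi)$ and $c_i(\Phi)/c_1(\Phi)$ in \eqref{normalization} are invariant under affine transformation of $\Phi$, and since the classical-parameters condition \eqref{theta* in terms of classical parameters} on $\theta_i^*$ is likewise affine-invariant (modulo redefining $\xi^*,\zeta^*$), I may reduce to the case where $\Phi=\Phi(\Gamma)$ is normalized so that the Leonard-system and graph intersection numbers coincide, as in \eqref{parameters of Phi and Gamma}. The first step is then to ask, for each of Cases I, IA, II, IIA, IIB, IIC, III, whether $\theta_i^*$ can be written as $\xi^*\gauss{d-i}{1}_q+\zeta^*$ with $\xi^*\ne 0$. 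In Case I the closed form for $\theta_i^*$ contains both a $q^{-i}$-term and an $s^*q^{i+1}$-term, whereas $\gauss{d-i}{1}_q$ involves only $q^{-i}$ (up to affine adjustment); this forces $s^*=0$, after which $\theta_i^*$ is indeed affine in $q^{-i}$ and hence in $\gauss{d-i}{1}_q$. Cases II, IIB, III each produce a $\theta_i^*$-sequence that is affine in $q^i$ or quadratic in $i$ in a shape incompatible with \eqref{theta* in terms of classical parameters}, and are therefore excluded. Cases IA, IIA, IIC (specializations with $q=1$ or $q=-1$) give $\theta_i^*$ linear in $i$ or of the right compatible shape, and pass the test with $q$ prescribed by the case.

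The second step is to substitute the surviving parameter arrays into \eqref{normalization} and simplify. Writing out $\tau_i^*$ and $\eta_i^*$ as products, in Case I with $s^*=0$ the ratios telescope to $q$-factorials in exactly the pattern of \eqref{classical parameters}, and one reads off $q$ as the Case I parameter together with explicit rational expressions for $\alpha$ and $\beta$ in terms of $r_1,r_2,s$. The remaining cases IA, IIA, IIC are handled identically, with the appropriate interpretation of $\gauss{\cdot}{1}_q$ at $q=1$ (giving $\gauss{i}{1}_1=i$) or at $q=-1$, yielding the claimed correspondence table. The converse direction is immediate: if $\Gamma$ has classical parameters $(d,q,\alpha,\beta)$, then $b_i,c_i,\theta_i^*$ are prescribed by \eqref{classical parameters} and \eqref{theta* in terms of classical parameters}, and the uniqueness of the parameter array for given $\{\theta_i\},\{\theta_i^*\},\{b_i\},\{c_i\}$ (via \eqref{bi, ci}) forces $p(\Phi)$ into one of the listed cases with the stated restriction.

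The main obstacle is purely bookkeeping in the second step: each of the four surviving cases requires identifying the classical parameters $\alpha$ and $\beta$ as specific rational expressions in the Leonard-system data $(h,h^*,q,r_1,r_2,s,s^*,\theta_0,\theta_0^*)$ and verifying that the two independent factorizations on the right-hand side of \eqref{classical parameters} both come out correctly after the telescoping of $\tau_i^*,\eta_{d-i}^*$. One must also be careful that the affine freedom used to normalize $\theta_i^*$ to the form $\xi^*\gauss{d-i}{1}_q+\zeta^*$ is consistent with the normalization $b_0^*(\Phi)=\theta_0^*(\Phi)$, $c_1^*(\Phi)=1$ built into $\Phi(\Gamma)$, which is what ultimately fixes the scalars $\xi^*,\zeta^*$ and hence the exact formulas in the table.
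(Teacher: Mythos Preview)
Your approach is essentially the paper's own: a direct case-by-case verification using \eqref{classical parameters}, \eqref{theta* in terms of classical parameters}, and \eqref{normalization}, first filtering the cases by the shape of $\{\theta_i^*\}$ and then reading off $(q,\alpha,\beta)$ from the intersection numbers. One small correction: your parenthetical describing Cases IA, IIA, IIC as ``specializations with $q=1$ or $q=-1$'' is off---Case IA retains a free base $q$ (its $\theta_i^*$ is affine in $q^{-i}$, not linear in $i$), while IIA and IIC both correspond to $q=1$ in the classical-parameter sense, and $q=-1$ never occurs (classical parameters require $q\ne 0,-1$); this does not affect your argument, only the narrative.
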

\begin{center}
\begin{tabular}{cccc}
\hline \\[-.18in]
Case & $q$ & $\alpha$ & $\beta$ \\
\hline \\[-.18in]
	I, $s^*=r_1=0$ & $q$ & $\dfrac{r_2(1-q)}{sq^d-r_2}$ & $\dfrac{r_2q-1}{q(sq^d-r_2)}$ \\[.14in]
	IA & $q$ & $\dfrac{r(1-q)}{sq^d-r}$ & $\dfrac{r}{sq^d-r}$ \\[.14in]
	IIA & $1$ & $\dfrac{1}{r-s-d}$ & $\dfrac{-1-r}{r-s-d}$ \\[.14in]
	IIC & $1$ & $0$ & $\dfrac{-r}{r-ss^*}$ \\[.12in]
\hline
\end{tabular}
\end{center}

\begin{thm}\label{convexity and classical parameters}
With reference to \eqref{general convention}, suppose $1<w<d$.
Then $Y$ is convex precisely when $\Gamma$ has classical parameters (with standard $Q$-polynomial ordering).
\end{thm}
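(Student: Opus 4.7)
The plan is to reduce convexity of $Y$ to the equality $c_i(\Gamma_Y)=c_i(\Gamma)$ of intersection numbers for $1\leqslant i\leqslant w$, and then to match this numerical condition against the parameter-array data of Leonard systems, thereby isolating exactly the classical cases listed in \eqref{cases and classical parameters}.

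The first step is to establish the reduction. If $Y$ is convex, then $w\geqslant 2$ forces $\Gamma_Y$ to be connected (any two points of $Y$ are joined by a $\Gamma$-geodesic lying in $Y$), hence distance-regular of diameter $w$ by \eqref{connectivity implies distance-regularity}, with $\partial_{\Gamma_Y}=\partial_{\Gamma}$ on $Y$; counting the $c_i(\Gamma)$ second-to-last vertices of $x$-to-$y$ geodesics then yields $c_i(\Gamma_Y)=c_i(\Gamma)$ for $1\leqslant i\leqslant w$. For the converse, assuming $\Gamma_Y$ is distance-regular with matching $c_i$'s, an induction on $i$ shows that all of $\Gamma_{i-1}(x)\cap\Gamma(y)$ lies in $Y$ for every pair $x,y\in Y$ at distance $i$, so that every $\Gamma$-geodesic between points of $Y$ lies in $Y$.

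The second step is to translate the numerical condition into a parametric identity and verify it case by case. By \eqref{when Gamma_Y is distance-regular}, distance-regularity of $\Gamma_Y$ already restricts $p(\Phi(\Gamma))$ to Cases I, IA, II, IIA, IIB, IIC, or III with $w^*$ even, and in each such case $\Phi(\Gamma_Y)$ is a $0$-descendent of $\Phi(\Gamma)$ whose parameter array is available in closed form from the appendix. Combining this with \eqref{normalization} and $c_1(\Gamma)=c_1(\Gamma_Y)=1$, each equation $c_i(\Gamma_Y)=c_i(\Gamma)$ becomes a rational identity in the affine invariants of $\Phi(\Gamma)$. A direct case-by-case check, determining for each admissible case when these identities hold for all $2\leqslant i\leqslant w$, should leave precisely Case I with $s^*=0$ (which then forces $r_1=0$) together with Cases IA, IIA and IIC, and exclude Cases II, IIB and III with $w^*$ even. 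By \eqref{cases and classical parameters}, this list is exactly the set of parameter arrays for which $\Gamma$ has classical parameters under the standard $Q$-polynomial ordering.

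The main obstacle I anticipate is the case-by-case verification, since each case yields a distinct rational identity in $q$-shifted Leonard parameters, and the full system (rather than any single identity) must force the classical specialization. The hypothesis $w\geqslant 2$ is essential here, as a single $c_i$ can coincide by accident but the whole sequence cannot unless the parameters are classical. A minor technicality will be to keep track of the scaling factor $\xi$ from the proof of \eqref{when Gamma_Y is distance-regular}, so that $c_i(\Gamma_Y)$ is correctly read off from $p(\Phi(\Gamma_Y))$.
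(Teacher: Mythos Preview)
Your proposal is correct and follows essentially the same route as the paper: reduce convexity of $Y$ to the equality $c_i(\Gamma)=c_i(\Gamma_Y)$ for $1\leqslant i\leqslant w$, invoke \eqref{when Gamma_Y is distance-regular} so that $\Phi(\Gamma_Y)$ is a $0$-descendent of $\Phi(\Gamma)$ with parameter array given by \eqref{characterization of bilinear form in parametric form}, and then use \eqref{normalization} to compute the ratio $c_i(\Gamma)/c_i(\Gamma_Y)$ case by case, finding it equals $1$ identically precisely in Case~I with $s^*=0$ and Cases IA, IIA, IIC, which by \eqref{cases and classical parameters} is exactly the classical-parameters condition. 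Your remark about tracking the scaling factor $\xi$ is unnecessary, since the quantities in \eqref{normalization} are already affine-invariant and $c_1(\Gamma)=c_1(\Gamma_Y)=1$ fixes the normalization.
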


\begin{proof}
Let $\Phi=\Phi(\Gamma)$ and let $p(\Phi)$ be given as in \eqref{list of parameter arrays}.
First, we may assume that $\Gamma_Y$ is distance-regular, or equivalently, by \eqref{when Gamma_Y is distance-regular}, that $w^*$ is even if $p(\Phi)$ satisfies Case III.
Indeed, if $Y$ is convex then $\Gamma_Y$ is connected and hence distance-regular by \eqref{connectivity implies distance-regularity}.
On the other hand, if $\Gamma$ has classical parameters then by \eqref{cases and classical parameters} $p(\Phi)$ does not satisfy Case III in the first place.
Let $\Phi'=\Phi(\Gamma_Y)$.
Then by \eqref{when Gamma_Y is distance-regular} $p(\Phi')$ takes the form given in \eqref{characterization of bilinear form in parametric form} with $\rho=0$.
Note that $Y$ is convex if and only if $c_i(\Gamma)=c_i(\Gamma_Y)$ for all $1\leqslant i\leqslant w$.
Using \eqref{normalization} we find that $c_i(\Gamma)/c_i(\Gamma_Y)$ equals
\begin{align*}
	& \frac{(1-s^*q^{w+2})(1-s^*q^{i+d+1})}{(1-s^*q^{d+2})(1-s^*q^{i+w+1})} && \text{for Case I}, \\
	& \frac{(s^*+w+2)(s^*+i+d+1)}{(s^*+d+2)(s^*+i+w+1)} && \text{for Cases II, IIB}, \\
	& \frac{-s^*+w+2}{-s^*+d+2} && \text{for Case III}, \ d \ \text{even}, \ w \ \text{even}, \ i \ \text{even}, \\
	& \frac{(-s^*+w+2)(-s^*+i+d+1)}{(-s^*+d+2)(-s^*+i+w+1)} && \text{for Case III}, \ d \ \text{even}, \ w \ \text{even}, \ i \ \text{odd}, \\
	& \frac{-s^*+i+d+1}{-s^*+i+w+1} && \text{for Case III}, \ d \ \text{odd}, \ w \ \text{odd}, \ i \ \text{even}, \\
	& 1 && \text{for Cases IA, IIA, IIC; or} \\
	&&& \text{Case III}, \ d \ \text{odd}, \ w \ \text{odd}, \ i \ \text{odd}.
\end{align*}
Since $1<w<d$ it follows that $Y$ is convex precisely when $p(\Phi)$ satisfies one of the following: Case I, $s^*=0$; or Cases IA, IIA, IIC.
Hence the result follows from \eqref{cases and classical parameters}.
\end{proof}

The following is another important consequence of \eqref{Phi(Gamma) and Phi(Gamma_Y) determine each other}, \eqref{cases and classical parameters}, \eqref{characterization of bilinear form in parametric form}:

\begin{thm}\label{descendents inherit classical parameters}
Given scalars $q$, $\alpha$ and $\beta$, if $\Gamma$ has classical parameters $(d,q,\alpha,\beta)$ (with standard $Q$-polynomial ordering) then $\Gamma_Y$ is distance-regular and has classical parameters $(w,q,\alpha,\beta)$.
The converse also holds, provided $w\geqslant 3$. 
\end{thm}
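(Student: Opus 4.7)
The plan is to derive both directions from the case-analysis in Proposition \eqref{cases and classical parameters} together with the explicit description of parameter arrays of $0$-descendents supplied by \eqref{characterization of bilinear form in parametric form} in the appendix.

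For the forward implication, suppose $\Gamma$ has classical parameters $(d,q,\alpha,\beta)$. By \eqref{cases and classical parameters}, $p(\Phi(\Gamma))$ satisfies Case I with $s^*=0$, or one of Cases IA, IIA, IIC. In each of these, Proposition \eqref{when Gamma_Y is distance-regular} gives distance-regularity of $\Gamma_Y$ for free, since the only obstruction was Case III with $w^*$ odd, and that case does not arise. Consequently $\Phi(\Gamma_Y)$ is a $0$-descendent of $\Phi(\Gamma)$ of diameter $w$.

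I would then read off $p(\Phi(\Gamma_Y))$ from \eqref{characterization of bilinear form in parametric form}. The point is that this formula preserves the case-type and keeps all the underlying scalars (the $q$, the $r_i$'s, $s$, $s^*$) intact, merely replacing the diameter $d$ by $w$. Feeding the resulting parameter array back into the table of \eqref{cases and classical parameters}, we obtain classical parameters $(w,q,\alpha,\beta)$ with the \emph{same} $q$, $\alpha$, $\beta$. As a sanity check, the computation carried out in the proof of \eqref{convexity and classical parameters} already shows $c_i(\Gamma_Y)=c_i(\Gamma)$ for exactly these cases and $1\leqslant i\leqslant w$, which matches the classical $c_i$-formula, and the $b_i$ then follow from the valency relation $a_i+b_i+c_i=b_0(\Gamma_Y)$.

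For the converse, assume $w\geqslant 3$ and $\Gamma_Y$ has classical parameters $(w,q,\alpha,\beta)$. By Proposition \eqref{Phi(Gamma) and Phi(Gamma_Y) determine each other}, $\Phi(\Gamma)$ is determined by $\Phi(\Gamma_Y)$ up to isomorphism. Applying \eqref{cases and classical parameters} to $\Gamma_Y$ places $p(\Phi(\Gamma_Y))$ in one of the four distinguished cases. Since the $0$-descendent formula of \eqref{characterization of bilinear form in parametric form} preserves case-type and scalars, the unique lift $p(\Phi(\Gamma))$ must lie in the same case with the same $q$, $r_i$, $s$, $s^*$ and diameter $d$. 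Reading off the table in \eqref{cases and classical parameters} once more, we conclude that $\Gamma$ has classical parameters $(d,q,\alpha,\beta)$.

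The main obstacle is the case-by-case verification that the appendix formula for $p(\Phi(\Gamma_Y))$ really does preserve the case-type and the key scalars under the substitution $d\mapsto w$, for each of Cases I (with $s^*=0$), IA, IIA, IIC. This is a bookkeeping exercise rather than a conceptual difficulty, but it is the crux: once this stability is established, both directions fall out of the combination of \eqref{cases and classical parameters} and \eqref{Phi(Gamma) and Phi(Gamma_Y) determine each other}.
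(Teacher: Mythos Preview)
Your proposal is correct and follows essentially the same approach as the paper, which simply records the theorem as a consequence of \eqref{Phi(Gamma) and Phi(Gamma_Y) determine each other}, \eqref{cases and classical parameters}, \eqref{characterization of bilinear form in parametric form}. One small correction: the scalars are not literally ``kept intact'' under the $0$-descendent formula of \eqref{characterization of bilinear form in parametric form} (for instance $s\mapsto sq^{d-w}$ in Case I and $s\mapsto s+d-w$ in Case IIA), but the specific combinations appearing in the table of \eqref{cases and classical parameters}---such as $sq^d$ in Cases I, IA and $r-s-d$ in Case IIA---are preserved, which is exactly what your promised case-by-case bookkeeping will reveal.
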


\begin{rem}
In \cite[Proposition 2]{Tanaka2006JCTA}, the author showed that $\iota(\Gamma_Y)$ is uniquely determined by $w$, and when $\Gamma$ is of ``semilattice-type'' the convexity of $Y$ was then derived from the existence of a specific example.
We may remark that \eqref{convexity and classical parameters}, \eqref{descendents inherit classical parameters} supersede these results and give an answer to the problem raised in \cite[p. 907, Remark]{Tanaka2006JCTA}.
\end{rem}

We end this section with a comment.
Recall that $Y$ is called \emph{strongly closed} if for any $x,y\in Y$ we have $\{z\in X:\partial(x,z)+\partial(z,y)\leqslant\partial(x,y)+1\}\subseteq Y$.

\begin{note}
With reference to \eqref{general convention}, suppose $1<w<d$.
Then $Y$ is strongly closed precisely when $\Gamma$ has classical parameters $(d,q,0,\beta)$ (with standard $Q$-polynomial ordering).
\end{note}

\begin{proof}
We may assume that $Y$ is convex, or equivalently, by \eqref{convexity and classical parameters}, that $\Gamma$ has classical parameters $(d,q,\alpha,\beta)$.
In particular, $c_i(\Gamma)=c_i(\Gamma_Y)$ $(1\leqslant i\leqslant w)$.
Note that $Y$ is then strongly closed if and only if $a_i(\Gamma)=a_i(\Gamma_Y)$ for all $1\leqslant i\leqslant w$.
Since $\Gamma_Y$ has classical parameters $(w,q,\alpha,\beta)$ by \eqref{descendents inherit classical parameters}, it follows from \eqref{classical parameters} that this happens precisely when $\alpha=0$, as desired.
\end{proof}

\section{Quantum matroids and descendents}\label{sec: quantum matroids}

There are many results on distance-regular graphs with the property that any pair of vertices $x,y$ is contained in a strongly closed subset with width $\partial(x,y)$; see e.g., \cite{Weng1998GC,Hiraki2009GC}.
In view of the results of \S \ref{sec: convexity and classical parameters}, in this section we assume $\Gamma$ has classical parameters and look at the implications of similar existence conditions on descendents.

First we recall some facts concerning quantum matroids \cite{Terwilliger1996ASPM} and related distance-regular graphs.
A finite nonempty poset $\mathscr{P}$ is a \emph{quantum matroid} if

\begin{enumerate}[(QM1)]
\setlength{\itemsep}{0cm}
\item $\mathscr{P}$ is ranked;
\item $\mathscr{P}$ is a (meet) semilattice;
\item For all $x\in\mathscr{P}$, the interval $[0,x]$ is a modular atomic lattice;
\item For all $x,y\in\mathscr{P}$ satisfying $\mathrm{rank}(x)<\mathrm{rank}(y)$, there is an atom $a\in\mathscr{P}$ such that $a\leqslant y$, $a\not\leqslant x$ and $x\vee a$ exists in $\mathscr{P}$.
\end{enumerate}

We say $\mathscr{P}$ is \emph{nontrivial} if $\mathscr{P}$ has rank $d\geqslant 2$ and is not a modular atomic lattice.
Suppose $\mathscr{P}$ is nontrivial.
Then $\mathscr{P}$ is called $q$-\emph{line regular} if each rank $2$ element covers exactly $q+1$ elements; $\mathscr{P}$ is $\beta$-\emph{dual-line regular} if each element with rank $d-1$ is covered by exactly $\beta+1$ elements; $\mathscr{P}$ is $\alpha$-\emph{zig-zag regular} if for all pairs $(x,y)$ such that $\mathrm{rank}(x)=d-1$, $\mathrm{rank}(y)=d$ and $x$ covers $x\wedge y$, there are exactly $\alpha+1$ pairs $(x_1,y_1)$ such that $y_1$ covers both $x$ and $x_1$, and $y$ covers $x_1$.
We say $\mathscr{P}$ is \emph{regular} if $\mathscr{P}$ is line regular, dual-line regular and zig-zag regular.
Suppose $\mathscr{P}$ is nontrivial and regular with parameters $(d,q,\alpha,\beta)$.
Let $\mathrm{top}(\mathscr{P})$ be the top fiber of $\mathscr{P}$ and set $R=\left\{(x,y)\in\mathrm{top}(\mathscr{P})\times\mathrm{top}(\mathscr{P}):x,y\ \text{cover}\ x\wedge y\right\}$.
Then by \cite[Theorem 38.2]{Terwilliger1996ASPM}, $\Gamma=(\mathrm{top}(\mathscr{P}),R)$ is distance-regular.
Moreover, it has classical parameters $(d,q,\alpha,\beta)$ provided that the diameter equals $d$.

We now list five examples of nontrivial regular quantum matroids.
In \eqref{truncated Boolean algebra}--\eqref{polar spaces} below, the partial order on $\mathscr{P}$ will always be defined by inclusion and $Y$ will denote a nontrivial descendent of $\Gamma$.

\begin{exam}\label{truncated Boolean algebra}
The truncated Boolean algebra $B(d,\nu)$ $(\nu >d)$.
Let $\Omega$ be a set of size $\nu$ and $\mathscr{P}=\{x \subseteq \Omega :|x| \leqslant d\}$.
$\mathscr{P}$ has parameters $(d,1,1,\nu-d)$ and $\mathrm{top}(\mathscr{P})$ induces the Johnson graph $J(\nu,d)$ \cite[\S 9.1]{BCN1989B}.
If $\nu \geqslant 2d$ then $Y$ satisfies one of the following:
(i) $Y=\{x \in \mathrm{top}(\mathscr{P}):u \subseteq x\}$ for some $u \subseteq \Omega$ with $|u|=w^*$;
(ii) $\nu=2d$ and $Y=\{x \in \mathrm{top}(\mathscr{P}):x \subseteq u\}$ for some $u \subseteq \Omega$ with $|u|=d+w$.
\end{exam}

\begin{exam}\label{Hamming matroid}
The Hamming matroid $H(d,\ell)$ $(\ell \geqslant 2)$.
Let $\Omega_1,\Omega_2,\dots,\Omega_d$ be pairwise disjoint sets of size $\ell$, $\Omega=\bigcup_{i=1}^d\Omega_i$ and $\mathscr{P}=\{x \subseteq \Omega :|x \cap \Omega_i| \leqslant 1\ (1 \leqslant i \leqslant d)\}$.
$\mathscr{P}$ has parameters $(d,1,0,\ell-1)$ and $\mathrm{top}(\mathscr{P})$ induces the Hamming graph $H(d,\ell)$ \cite[\S 9.2]{BCN1989B}.
$Y$ is of the form $\{x\in\mathrm{top}(\mathscr{P}):u\subseteq x\}$ for some $u\in\mathscr{P}$ with $|u|=w^*$.
\end{exam}

\begin{exam}\label{truncated projective geometry}
The truncated projective geometry $L_q(d,\nu)$ $(\nu>d)$.
Let $\mathscr{P}$ be the set of subspaces $x$ of $\mathbb{F}_q^{\nu}$ with $\dim x \leqslant d$.
$\mathscr{P}$ has parameters $(d,q,q,\beta)$ where $\beta+1=\gauss{\nu-d+1}{1}_q$, and $\mathrm{top}(\mathscr{P})$ induces the Grassmann graph $J_q(\nu,d)$ \cite[\S 9.3]{BCN1989B}.
If $\nu\geqslant 2d$ then $Y$ satisfies one of the following:
(i) $Y=\{x \in \mathrm{top}(\mathscr{P}): u \subseteq x\}$ for some subspace $u \subseteq \mathbb{F}_q^{\nu}$ with $\dim u=w^*$;
(ii) $\nu=2d$ and $Y=\{x \in \mathrm{top}(\mathscr{P}): x \subseteq u\}$ for some subspace $u \subseteq \mathbb{F}_q^{\nu}$ with $\dim u=d+w$.
\end{exam}

\begin{exam}\label{attenuated space}
The attenuated space $A_q(d,d+e)$ $(e\geqslant 1)$.
Fix a subspace $E$ of $\mathbb{F}_q^{d+e}$ with $\dim E=e$, and let $\mathscr{P}$ be the set of subspaces $x$ of $\mathbb{F}_q^{d+e}$ with $x\cap E=0$.
$\mathscr{P}$ has parameters $(d,q,q-1,q^e-1)$, and $\mathrm{top}(\mathscr{P})$ induces the bilinear forms graph $\mathrm{Bil}_q(d,e)$ \cite[\S 9.5A]{BCN1989B}.
If $d \leqslant e$ then $Y$ satisfies one of the following:
(i) $Y=\{x \in \mathrm{top}(\mathscr{P}): u \subseteq x\}$ for some subspace $u \subseteq \mathbb{F}_q^{d+e}$ with $\dim u=w^*$ and $u\cap E=0$;
(ii) $d=e$ and $Y=\{x \in \mathrm{top}(\mathscr{P}): x \subseteq u\}$ for some subspace $u \subseteq \mathbb{F}_q^{d+e}$ with $\dim u=d+w$ and $\dim u \cap E=w$.
\end{exam}

\begin{exam}\label{polar spaces}
The classical polar spaces.
Let $V$ be one of the following spaces over $\mathbb{F}_q$ equipped with a nondegenerate form:
\begin{center}
\begin{tabular}{cclc}
\hline
\\[-.18in]
Name & $\dim V$ & \multicolumn{1}{c}{Form} & $e$ \\
\hline
\\[-.18in]
$[C_d(q)]$ & $2d$ & alternating & $1$ \\[.02in]
$[B_d(q)]$ & $2d+1$ & quadratic & $1$ \\[.02in]
$[D_d(q)]$ & $2d$ & quadratic (Witt index $d$) & $0$ \\[.02in]
$[^2\!D_{d+1}(q)]$ & $2d+2$ & quadratic (Witt index $d$) & $2$ \\[.02in]
$[^2\!A_{2d}(\ell)]$ & $2d+1$ & Hermitean ($q=\ell^2$) & $\frac{3}{2}$ \\[.02in]
$[^2\!A_{2d-1}(\ell)]$ & $2d$ & Hermitean ($q=\ell^2$) & $\frac{1}{2}$ \\[.02in]
\hline
\end{tabular}
\end{center}
Let $\mathscr{P}$ be the set of isotropic subspaces of $V$.
$\mathscr{P}$ has parameters $(d,q,0,q^e)$ and $\mathrm{top}(\mathscr{P})$ induces the dual polar graph on $V$ \cite[\S 9.4]{BCN1989B}.
$Y$ is of the form $\{x\in\mathrm{top}(\mathscr{P}):u\subseteq x\}$ for some $u\in\mathscr{P}$ with $\dim u=w^*$.
\end{exam}

We recall the following isomorphisms: $J(\nu,d)\cong J(\nu,\nu-d)$, $J_q(\nu,d) \cong J_q(\nu,\nu-d)$, $\mathrm{Bil}_q(d,e) \cong \mathrm{Bil}_q(e,d)$.
Note also that, in each of \eqref{truncated Boolean algebra}--\eqref{polar spaces}, the descendents with any fixed width form a single orbit under the full automorphism group of $\Gamma$.

\begin{thm}[{\cite[Theorem 39.6]{Terwilliger1996ASPM}}]\label{classification of regular quantum matroids}
Every nontrivial regular quantum matroid with rank at least four is isomorphic to one of \eqref{truncated Boolean algebra}--\eqref{polar spaces}.
\end{thm}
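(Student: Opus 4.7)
The plan is to classify nontrivial regular quantum matroids $\mathscr{P}$ of rank $d \geqslant 4$ by analyzing the local structure at each top element and then identifying the global object on the atoms. First I would exploit axiom (QM3): for every $x \in \mathscr{P}$, the principal order ideal $[0,x]$ is a modular atomic (equivalently, modular geometric) lattice, so by the classical structure theorem for modular geometric lattices it is a direct product of ``lines'' (two-element chains) and projective geometries over possibly different division rings. The $q$-line regularity forces a uniform parameter, so $[0,x]$ is either a Boolean lattice of rank $\mathrm{rank}(x)$ (when $q=1$) or a single projective geometry $\mathrm{PG}(\mathrm{rank}(x)-1,q)$ over $\mathbb{F}_q$ with $q\geqslant 2$. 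Mixed products cannot occur in $\mathscr{P}$ of rank $\geqslant 4$: any genuine factorization of $[0,1_{\mathscr{P}}]$ would clash with the existence of rank-$2$ elements covering exactly $q+1$ atoms. This dichotomy already separates the set-theoretic families \eqref{truncated Boolean algebra}, \eqref{Hamming matroid} from the $q$-analogues \eqref{truncated projective geometry}, \eqref{attenuated space}, \eqref{polar spaces}.

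Next I would use (QM4) together with dual-line and zig-zag regularity to reassemble the local pictures into a global geometry on the atoms. Axiom (QM4) guarantees that any two elements admit a common upward refinement by adjoining an atom, so the atoms together with the relation ``generates a rank-$2$ element of $\mathscr{P}$'' form a point-line incidence geometry in which every top element $x$ induces a full Boolean algebra or a full projective geometry. The parameter $\beta$ is extracted from the number of upper covers of a rank-$(d-1)$ element, and $\alpha$ from the zig-zag count, and these control whether the ambient geometry is truncated (as in $B(d,\nu)$ with $\nu > d$ or $L_q(d,\nu)$ with $\nu > d$), product-like (as in $H(d,\ell)$ or the attenuated space $A_q(d,d+e)$), or of polar type. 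For $d \geqslant 4$ each of the resulting geometries is sufficiently rigid that one of the five families listed must occur.

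The main obstacle is the polar-space case: once it is established that the atoms support a thick polar geometry of rank $d\geqslant 4$ with uniform order $q$, one must invoke Tits's classification of such polar spaces to conclude that the underlying form is alternating, quadratic, or Hermitean over a finite field, producing precisely the six subfamilies of \eqref{polar spaces}. The non-polar cases are by comparison elementary once the local Boolean or projective structure has been identified, although some bookkeeping is required to rule out degenerate truncations; these are excluded by the nontriviality hypothesis, which prevents $\mathscr{P}$ itself from being a modular atomic lattice. I would finish by checking that the parameter tuples $(d,q,\alpha,\beta)$ computed in each of the five alternatives agree with the data recorded in \eqref{truncated Boolean algebra}--\eqref{polar spaces}, and that the five families are pairwise non-isomorphic, thereby completing the identification.
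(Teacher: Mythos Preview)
The paper does not prove this theorem; it is quoted verbatim from \cite[Theorem 39.6]{Terwilliger1996ASPM} and used as a black box to obtain \eqref{characterization of graphs with semilattice-type}. So there is nothing to compare your argument against here, and any assessment has to be against Terwilliger's original, which occupies a large portion of that long monograph.

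As for your outline itself: the broad strategy---analyze $[0,x]$ via the structure theory of modular geometric lattices, separate $q=1$ from $q>1$, then build a global point--line geometry on the atoms and invoke Tits in the polar case---is in the right spirit, but several steps are not arguments as written. You refer to a factorization of ``$[0,1_{\mathscr{P}}]$'', but a quantum matroid is only a meet-semilattice and in general has no top element; the nontriviality hypothesis is precisely that $\mathscr{P}$ is \emph{not} a modular atomic lattice, so you must work interval by interval over $\mathrm{top}(\mathscr{P})$ and explain why the local projective geometries glue coherently. The passage ``$\alpha$ and $\beta$ control whether the ambient geometry is truncated, product-like, or of polar type'' hides essentially all of the work: in Terwilliger's proof this trichotomy is obtained only after a lengthy development of ``lines'', ``claws'', counting of flags, and several structural lemmas specific to quantum matroids, not by reading off two parameters. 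Likewise, before one can invoke Tits one must verify the Buekenhout--Shult one-or-all axiom (or an equivalent) from (QM1)--(QM4) plus regularity, and your sketch does not indicate how that is done. What you have is a plausible table of contents, not a proof.
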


Now we return to the general situation \eqref{general convention}.
Let $\mathscr{P}$ be a nonempty family of descendents of $\Gamma$.
We say $\mathscr{P}$ \emph{satisfies (UD)$_i$} if any two vertices $x,y\in X$ at distance $i$ are contained in a unique descendent in $\mathscr{P}$, denoted $Y(x,y)$, with width $i$.
We shall assume the following three conditions until \eqref{when P is the whole set}:

\begin{conv}\label{Gamma has classical parameters}
$\Gamma$ has classical parameters $(d,q,\alpha,\beta)$;
\end{conv}

\begin{conv}\label{P satisfies (UD)}
$\mathscr{P}$ satisfies (UD)$_i$ for all $i$;
\end{conv}

\begin{conv}\label{P is closed under intersection}
$Y_1\cap Y_2\in\mathscr{P}$ for all $Y_1,Y_2\in\mathscr{P}$ such that $Y_1\cap Y_2\ne\emptyset$.
\end{conv}

Referring to \eqref{Gamma has classical parameters}--\eqref{P is closed under intersection}, define a partial order $\leqslant$ on $\mathscr{P}$ by reverse inclusion.
Our goal is to show that $\mathscr{P}$ is a nontrivial regular quantum matroid.
Note that $X$ is the minimal element of $\mathscr{P}$ and the maximal elements of $\mathscr{P}$ are precisely the singletons.
We shall freely use \eqref{fundamental inequality}, \eqref{transitivity}, \eqref{convexity and classical parameters}, \eqref{descendents inherit classical parameters}.
In particular, note that every $Y\in\mathscr{P}$ is convex, $\Gamma_Y$ is distance-regular with classical parameters $(w(Y),q,\alpha,\beta)$, and $\mathscr{P}_Y:=\{Z\in\mathscr{P}:Z\subseteq Y\}$ is a family of descendents of $\Gamma_Y$.

\begin{note}\label{descendents containing given two vertices}
For $0\leqslant i\leqslant j\leqslant d$, we have $|\{Y\in\mathscr{P}:x,y\in Y,\ w(Y)=j\}|=\gauss{d-i}{j-i}_q$ for any two vertices $x,y\in X$ with $\partial(x,y)=i$.
In particular, $q$ is a positive integer.
\end{note}

\begin{proof}
Count in two ways the sequences $(z_1,\dots,z_{j-i},Y)$ such that $z_l\in\Gamma_{i+l}(x)\cap\Gamma(z_{l-1})$ $(1\leqslant l\leqslant j-i)$ and $Y=Y(x,z_{j-i})$, where $z_0=y$.
\end{proof}

\begin{note}\label{descendents satisfy (UD)}
$\mathscr{P}_Y$ satisfies (UD)$_i$ in $\Gamma_Y$ for all $Y\in\mathscr{P}$ and $i$.
\end{note}

\begin{proof}
Let $x,y\in Y$ and set $Z=Y(x,y)\in\mathscr{P}$.
Then $Y\cap Z$ belongs to $\mathscr{P}_Y$ and contains $x$ and $y$, so that $Z=Y\cap Z\in\mathscr{P}_Y$.
\end{proof}

\begin{note}\label{descendents containing given one}
For $0\leqslant i\leqslant j\leqslant d$, we have $|\{Z\in\mathscr{P}:Y\subseteq Z,\ w(Z)=j\}|=\gauss{d-i}{j-i}_q$ for every $Y\in\mathscr{P}$ with $w(Y)=i$.
\end{note}

\begin{proof}
Pick $x,y\in Y$ with $\partial(x,y)=i$ and let $Z\in\mathscr{P}$.
Then by \eqref{descendents satisfy (UD)} we find $x,y\in Z$ if and only if $Y\subseteq Z$.
Hence the result follows from \eqref{descendents containing given two vertices}.
\end{proof}

\begin{note}\label{P satisfies (QM1)}
$\mathscr{P}$ satisfies (QM1) and $\mathrm{rank}(Y)=w^*(Y)$ for every $Y\in\mathscr{P}$.
\end{note}

\begin{proof}
Pick $Y,Z\in\mathscr{P}$ with $Y\subseteq Z$.
Applying \eqref{descendents containing given one} to $(\Gamma_Z,\mathscr{P}_Z)$, we find $Y$ covers $Z$ precisely when $w(Z)=w(Y)+1$, or equivalently, $w^*(Z)=w^*(Y)-1$.
Hence $\mathrm{rank}(Y)$ is well defined and equals $w^*(Y)$.
\end{proof}

\begin{note}\label{P satisfies (QM2)}
$\mathscr{P}$ satisfies (QM2).
Moreover, $w(Y_1\wedge Y_2)=w(Y_1\cup Y_2)$ for any $Y_1,Y_2\in\mathscr{P}$.
\end{note}

\begin{proof}
Let $Y_1,Y_2\in\mathscr{P}$ and suppose $Y_1\not\subseteq Y_2$, $Y_2\not\subseteq Y_1$.
Since $Y_1,Y_2$ are completely regular, for every $x\in X$ and $i\in\{1,2\}$ we have $w(\{x\}\cup Y_i)=\partial(x,Y_i)+w(Y_i)$.
Hence there are $x_1\in Y_1$, $x_2\in Y_2$ such that $\partial(x_1,x_2)=w(Y_1\cup Y_2)$.
Set $Z=Y(x_1,x_2)$.
Pick $y_1\in Y_1$ with $\partial(x_2,y_1)=\partial(x_2,Y_1)=\partial(x_1,x_2)-w(Y_1)$.
Then $\partial(x_1,y_1)=w(Y_1)$, so that $y_1\in Z$ and $Y_1=Y(x_1,y_1)\subseteq Z$ by \eqref{descendents satisfy (UD)}.
Likewise, $Y_2\subseteq Z$.
Hence $Z$ is a lower bound for $Y_1,Y_2$.
Any lower bound for $Y_1,Y_2$ contains $x_1,x_2$, and thus $Z$ by \eqref{descendents satisfy (UD)}, whence $Z=Y_1\wedge Y_2$.
\end{proof}

\begin{note}
$\mathscr{P}$ satisfies (QM3).
\end{note}

\begin{proof}
Let $Y\in\mathscr{P}$.
Then the interval $[X,Y]$ is a lattice since $\mathscr{P}$ satisfies (QM2) by \eqref{P satisfies (QM2)}.
By \eqref{descendents containing given one}, every $Z\in [X,Y]$ with $w^*(Z)\geqslant 2$ covers $\gauss{w^*(Z)}{1}_q\geqslant 2$ elements in $\mathscr{P}$, so that $[X,Y]$ is atomic.
Let $Y_1,Y_2\in [X,Y]$ be distinct and suppose $w^*(Y_1)=w^*(Y_2)$.
Set $i=w(Y_1)$, $j=w^*(Y_1)$ for brevity.
We claim $w^*(Y_1\wedge Y_2)=j-1$ if and only if $w^*(Y_1\cap Y_2)=j+1$, where we recall $Y_1\vee Y_2=Y_1\cap Y_2$.
First suppose $w^*(Y_1\cap Y_2)=j+1$.
Then $w(Y_1\cap Y_2)=i-1$ and by \eqref{transitivity}, \eqref{fundamental inequality} we find $Y_1\cap Y_2$ has covering radius one in each of $\Gamma_{Y_1}$, $\Gamma_{Y_2}$, so that $w(Y_1\cup Y_2)\leqslant w(Y_1\cap Y_2)+2=i+1$.
Since $w(Y_1\wedge Y_2)>i$, we find $w(Y_1\wedge Y_2)=i+1$ by \eqref{P satisfies (QM2)} and thus $w^*(Y_1\wedge Y_2)=j-1$.
Next suppose $w^*(Y_1\wedge Y_2)=j-1$.
Then by \eqref{descendents containing given one} and since $\gauss{j}{1}_q>\gauss{j-1}{1}_q$, there is $C\in\mathscr{P}$ such that $w^*(C)=1$, $Y_1\subseteq C$, $Y_1\wedge Y_2\not\subseteq C$.
We have $Y_1=C\cap(Y_1\wedge Y_2)$ and hence $Y_1\cap Y_2=C\cap Y_2$.
Since $\hat{C}\circ\hat{Y_2}\in\sum_{l=0}^{j+1}E_l\mathbb{C}^X$ by virtue of \eqref{Norton}, we find $w^*(Y_1\cap Y_2)\leqslant j+1$.
By \eqref{fundamental inequality} and since $w(Y_1\cap Y_2)<i$, it follows that $w^*(Y_1\cap Y_2)=j+1$.
The claim now follows, and therefore $[X,Y]$ is modular.
\end{proof}

\begin{note}
$\mathscr{P}$ satisfies (QM4).
\end{note}

\begin{proof}
Let $Y_1,Y_2\in\mathscr{P}$ and suppose $w^*(Y_1)<w^*(Y_2)$.
First assume $Y_1\cap Y_2\ne\emptyset$.
Then by \eqref{descendents containing given one} and since $\gauss{w^*(Y_2)}{1}_q>\gauss{w^*(Y_1)}{1}_q$, there is $C\in\mathscr{P}$ such that $w^*(C)=1$, $Y_2\subseteq C$, $Y_1\not\subseteq C$.
Since $Y_1\cap C\ne\emptyset$ we find $Y_1\vee C=Y_1\cap C$.
Next assume $Y_1\cap Y_2=\emptyset$, and pick $x_1,y_1\in Y_1$ and $x_2\in Y_2$ such that $\partial(x_1,x_2)=w(Y_1\cup Y_2)$ and $\partial(x_2,y_1)=\partial(x_2,Y_1)=\partial(x_1,x_2)-w(Y_1)$ as in the proof of \eqref{P satisfies (QM2)}.
Set $Z_1=Y_1\wedge Y_2$, $Z_2=\{y_1\}\wedge Y_2$.
Then $Z_2\subseteq Z_1$, and since $w(\{y_1\}\cup Y_2)=\partial(y_1,Y_2)+w(Y_2)\leqslant\partial(y_1,x_2)+w(Y_2)=\partial(x_1,x_2)-w(Y_1)+w(Y_2)<w(Y_1\cup Y_2)$, it follows from \eqref{P satisfies (QM2)} that $w(Z_1)>w(Z_2)$, i.e., $w^*(Z_1)<w^*(Z_2)$.
Again there is $C\in\mathscr{P}$ such that $w^*(C)=1$, $Z_2\subseteq C$, $Z_1\not\subseteq C$.
Note that $Y_2\subseteq C$ and $Y_1\not\subseteq C$.
Since $y_1\in C$, we find $Y_1\cap C\ne\emptyset$ and $Y_1\vee C=Y_1\cap C$, as desired.
\end{proof}

\begin{note}
$\mathscr{P}$ is $q$-line regular, $\beta$-dual-line regular and $\alpha$-zig-zag regular.
\end{note}

\begin{proof}
By \eqref{descendents containing given one}, $\mathscr{P}$ is $q$-line regular.
Pick any $Y\in\mathscr{P}$ with $w(Y)=1$.
Then $|Y|=\beta+1$ by \eqref{descendents inherit classical parameters}, \eqref{classical parameters}, so that $\mathscr{P}$ is $\beta$-dual-line regular.
Let $x\in X$ and suppose $w(\{x\}\wedge Y)=2$, i.e., $\partial(x,Y)=1$.
We count the pairs $(y,Z)\in X\times\mathscr{P}$ such that $w(Z)=1$, $y\in Y$, $y\in Z$, $x\in Z$.
Note that $x,y$ must be adjacent and $Z=Y(x,y)$.
Hence the number of such pairs is $|\Gamma(x)\cap Y|$.
By \eqref{descendents inherit classical parameters}, \eqref{classical parameters}, the strongly regular graph $\Delta=\Gamma_{\{x\}\wedge Y}$ satisfies $k(\Delta)=\beta(q+1)$, $a_1(\Delta)=\beta+\alpha q-1$, $c_2(\Delta)=(\alpha+1)(q+1)$, and hence (\cite[Theorem 1.3.1]{BCN1989B}) has smallest eigenvalue $\theta_2(\Delta)=-(q+1)$, so that $Y$ attains the Hoffman bound $1-k(\Delta)/\theta_2(\Delta)=\beta+1$.
By \cite[Proposition 1.3.2]{BCN1989B}, $|\Gamma(x)\cap Y|=-c_2(\Delta)/\theta_2(\Delta)=\alpha+1$ and therefore $\mathscr{P}$ is $\alpha$-zig-zag regular.
\end{proof}

To summarize:

\begin{prop}
Referring to \eqref{Gamma has classical parameters}--\eqref{P is closed under intersection}, $\mathscr{P}$ is a nontrivial regular quantum matroid with parameters $(d,q,\alpha,\beta)$.
\end{prop}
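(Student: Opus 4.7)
The plan is to simply assemble the preceding Notes in \S \ref{sec: quantum matroids} into a verification that all parts of the definition of a nontrivial regular quantum matroid with parameters $(d,q,\alpha,\beta)$ are satisfied. The bulk of the work has already been discharged; this final Proposition is a bookkeeping step that combines those Notes, so I would structure the proof as a short enumeration rather than a new argument.

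First I would verify that $\mathscr{P}$ has rank $d$ in the sense of (QM1): by the preceding Note, $\mathrm{rank}(Y)=w^*(Y)$ for all $Y\in\mathscr{P}$, and since $X\in\mathscr{P}$ has $w^*(X)=0$ while each singleton $\{x\}\in\mathscr{P}$ has $w^*(\{x\})=d$, the rank of $\mathscr{P}$ is exactly $d$. The axioms (QM1)--(QM4) are then exactly the contents of the four Notes immediately preceding the Proposition, so I would cite them in order. Next I would invoke the last Note to conclude that $\mathscr{P}$ is $q$-line regular, $\beta$-dual-line regular and $\alpha$-zig-zag regular, which is the regularity requirement with the stated parameters.

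The only point that is not entirely mechanical is nontriviality. I would argue as follows: by Convention \eqref{general convention} we have $d\geqslant 3\geqslant 2$, so the rank condition is satisfied. To show that $\mathscr{P}$ is not itself a modular atomic lattice, note that the maximal elements of $\mathscr{P}$ (with respect to reverse inclusion) are precisely the singletons $\{x\}$, of which there are $|X|\geqslant 2$; hence $\mathscr{P}$ has no top element and a fortiori is not a lattice, let alone a modular atomic one. Combining this with (QM1)--(QM4) and the regularity statement yields the Proposition.

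The ``main obstacle'', in the sense of this final step, is essentially trivial: all substantive work (the meet-semilattice property from \eqref{P satisfies (QM2)}, the interval modularity, the cover-refinement axiom, and the counting arguments yielding the three regularity conditions) was carried out in the earlier Notes. I would therefore keep the proof to a few lines that cite each Note explicitly and verify nontriviality as above.
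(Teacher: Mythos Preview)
Your proposal is correct and matches the paper's approach exactly: the paper presents this Proposition as a pure summary (introduced by ``To summarize:'') with no separate proof, relying entirely on the preceding Notes for (QM1)--(QM4) and the regularity conditions. Your explicit verification of nontriviality via the absence of a top element is a correct elaboration of a point the paper leaves implicit.
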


Hence it follows from \eqref{classification of regular quantum matroids} that

\begin{thm}\label{characterization of graphs with semilattice-type}
Referring to \eqref{Gamma has classical parameters}--\eqref{P is closed under intersection}, suppose further $d\geqslant 4$.
Then $\Gamma$ is either a Johnson, Hamming, Grassmann, bilinear forms, or dual polar graph.
\end{thm}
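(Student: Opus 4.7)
The plan is to apply the classification Proposition~\eqref{classification of regular quantum matroids} directly to $\mathscr{P}$. By the proposition immediately preceding the theorem, $\mathscr{P}$ (partially ordered by reverse inclusion) is a nontrivial regular quantum matroid with parameters $(d,q,\alpha,\beta)$; and since its rank is $d\geqslant 4$, it must be isomorphic to one of the five semilattices of Examples~\eqref{truncated Boolean algebra}--\eqref{polar spaces}.

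The remaining work is to check that $\Gamma$ itself is the distance-regular graph supplied by the semilattice structure of $\mathscr{P}$ in the sense of the construction described just before Example~\eqref{truncated Boolean algebra}. First, I would observe that the maximal elements of $\mathscr{P}$ are precisely the singletons $\{x\}$ for $x\in X$: by Note~\eqref{P satisfies (QM1)} one has $\mathrm{rank}(Y)=w^*(Y)$, and the rank-$d$ elements are exactly those with $w=0$, i.e., the singletons. Hence $\mathrm{top}(\mathscr{P})$ is canonically identified with $X$. Next, from the computation inside the proof of Note~\eqref{P satisfies (QM2)}, the meet $\{x\}\wedge\{y\}$ equals $Y(x,y)$, which has width $\partial(x,y)$; thus both $\{x\}$ and $\{y\}$ cover $\{x\}\wedge\{y\}$ in $\mathscr{P}$ if and only if $\partial(x,y)=1$. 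In other words, the relation $R=\{(x,y):x,y\ \text{cover}\ x\wedge y\}$ on $\mathrm{top}(\mathscr{P})$ coincides with $R_1$, so the distance-regular graph afforded by $\mathscr{P}$ is $\Gamma$ itself.

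Having matched $\Gamma$ with the semilattice's graph, I would conclude by simply reading off the associated graph in each of Examples~\eqref{truncated Boolean algebra}--\eqref{polar spaces}: these yield, respectively, the Johnson, Hamming, Grassmann, bilinear forms, and dual polar graphs, which is exactly the list in the statement of the theorem.

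I do not foresee a genuine obstacle here, as the substantive content has already been distilled into the preceding proposition and into Proposition~\eqref{classification of regular quantum matroids}. The only item requiring a modicum of care is the order convention: the partial order on $\mathscr{P}$ is reverse inclusion, whereas in Examples~\eqref{truncated Boolean algebra}--\eqref{polar spaces} it is inclusion; but since the identification is made through the top fiber on both sides and the covering relation is defined intrinsically, this introduces no real difficulty.
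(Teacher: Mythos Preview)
Your proposal is correct and follows the same approach as the paper, which simply invokes the preceding proposition together with \eqref{classification of regular quantum matroids}. You have filled in more explicitly than the paper the identification of $\Gamma$ with the graph on $\mathrm{top}(\mathscr{P})$, and that argument is accurate.
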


One may wish to modify the conditions \eqref{Gamma has classical parameters}--\eqref{P is closed under intersection} so that we still obtain a characterization similar to \eqref{characterization of graphs with semilattice-type}.
For example, it seems natural to assume that $\mathscr{P}$ is the set of \emph{all} descendents of $\Gamma$.
We now show that in this case \eqref{P is closed under intersection} is redundant.

\begin{prop}\label{when P is the whole set}
Suppose $\mathscr{P}$ is the set of descendents of $\Gamma$.
Then \eqref{Gamma has classical parameters}, \eqref{P satisfies (UD)} together imply \eqref{P is closed under intersection}.
\end{prop}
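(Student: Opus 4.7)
Fix $Y_1, Y_2 \in \mathscr{P}$ with $Z := Y_1 \cap Y_2 \neq \emptyset$; the goal is $Z \in \mathscr{P}$. Since $\Gamma$ has classical parameters, $Y_1$ and $Y_2$ are convex by \eqref{convexity and classical parameters}, so $Z$ is convex and the graph distance on $Z$ agrees with that on $\Gamma$. Let $w$ be the diameter of $\Gamma_Z$, choose $x, y \in Z$ with $\partial(x, y) = w$, and let $Y^* := Y(x, y) \in \mathscr{P}$ denote the unique width-$w$ descendent through $x, y$ supplied by (UD)$_w$. The plan is to show $Z = Y^*$.

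The heart of the proof is the following \emph{minimality lemma}: for every $Y \in \mathscr{P}$ and $u, v \in Y$, $Y(u, v) \subseteq Y$. Granting this, applying it to $(Y_1, x, y)$ and $(Y_2, x, y)$ yields $Y^* \subseteq Z$. For the reverse inclusion, $|Y^*|$ is determined by $(w, q, \alpha, \beta)$ via \eqref{descendents inherit classical parameters}, so it suffices to show $|Z| = |Y^*|$; this I would verify by computing $|Y_1 \cap Y_2|$ via the completely regular structure of $Y_1$ and $Y_2$ (Theorem \eqref{fundamental inequality}) together with the classical parameters of $\Gamma$.

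I would prove the minimality lemma by induction on the excess $e := w(Y) - \partial(u, v) \geq 0$. The case $e = 0$ is immediate from (UD). For $e \geq 1$, \eqref{descendents inherit classical parameters} gives that $\Gamma_Y$ has classical parameters $(w(Y), q, \alpha, \beta)$, so its intersection numbers guarantee the existence of $z \in Y$ with $\partial(u, z) = w(Y) - 1$ and $v$ lying on a geodesic from $u$ to $z$; convexity of $Y(u, z)$ then places $v$ inside it. When $e \geq 2$, chaining the inductive hypothesis through $(Y, u, z)$ and $(Y(u, z), u, v)$ completes the step. The decisive case, and the main obstacle, is $e = 1$, where $w(Y) = \partial(u, v) + 1 =: i + 1$. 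Here I would double-count: (UD)$_{i+1}$ and convexity show that $z \mapsto Y(u, z)$ surjects $\Gamma_{i+1}(u) \cap \Gamma(v)$ onto the set of width-$(i+1)$ descendents through $(u, v)$, with each fiber of size $b_i(\Gamma_{Y'}) = q^i(\beta - \alpha \gauss{i}{1}_q)$, producing $\gauss{d-i}{1}_q$ such descendents in total. A parallel count, using the completely regular structure of the width-$i$ descendent $Y(u, v)$ (Theorem \eqref{fundamental inequality}) together with the classical parameters, yields the same total $\gauss{d-i}{1}_q$ for the collection of width-$(i+1)$ descendents that \emph{contain} $Y(u, v)$. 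As this latter collection is contained in the former and the two cardinalities agree, they coincide, forcing $Y(u, v) \subseteq Y$.
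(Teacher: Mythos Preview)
Your outline has two genuine gaps.

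First, the ``parallel count'' in the $e=1$ case is not justified and is, as stated, circular. You want to show that the number of width-$(i{+}1)$ descendents \emph{containing} $Y(u,v)$ equals $\gauss{d-i}{1}_q$; but the completely regular structure of $Y(u,v)$ only gives you the outer distribution (the numbers $|\Gamma_l(z)\cap Y(u,v)|$ for $z$ at prescribed distance), not any information about how \emph{other} descendents sit relative to $Y(u,v)$. In the paper this count is exactly \eqref{descendents containing given one}, and its proof requires the minimality statement \eqref{descendents satisfy (UD)} you are trying to establish. So the double-count does not close.

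Second, the reverse inclusion $Z\subseteq Y^*$ cannot be obtained by ``computing $|Y_1\cap Y_2|$ via the completely regular structure'': the cardinality of an intersection of two descendents is not determined by the parameters alone (it depends on their relative position), so there is nothing to compute.

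The paper closes both holes with different devices. For the minimality lemma it argues by downward induction on $i$: given $x,y\in Y$ at distance $i$ (and after reducing to $w(Y)=i+1$), it uses the count \eqref{descendents containing given two vertices} to find a descendent $C$ with $w^*(C)=1$, $x,y\in C$, $Y\not\subseteq C$; the intersection $Z=Y\cap C$ has width $i$ (via the inductive (UD)$_{i+1}$ in $\Gamma_C$), and crucially $\hat{Z}=\hat{Y}\circ\hat{C}\in\sum_{l\leqslant d-i}E_l\mathbb{C}^X$ by the Norton-type inclusion \eqref{Norton}, so $w^*(Z)\leqslant d-i$ and $Z$ is a descendent, whence $Y(x,y)=Z\subseteq Y$. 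This algebraic control of dual width under entrywise product is the missing idea in your attempt. For the reverse inclusion the paper avoids any cardinality comparison: once $Y^*\subseteq Z$, complete regularity of $Y^*$ gives $w(\{z\}\cup Y^*)=\partial(z,Y^*)+w$ for every $z\in Z$, and since $w(Z)=w$ this forces $\partial(z,Y^*)=0$, i.e., $Z=Y^*$.
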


\begin{proof}
Note that \eqref{descendents containing given two vertices} holds without change.
We next show \eqref{descendents satisfy (UD)}.
Let $i$ be given.
By induction, assume that $\mathscr{P}_Z$ satisfies (UD)$_l$ in $\Gamma_Z$ for all $Z\in\mathscr{P}$ and $l>i$.
Let $Y\in\mathscr{P}$ and pick any $x,y\in Y$ with $\partial(x,y)=i$.
Since $\mathscr{P}_Y$ satisfies (UD)$_{i+1}$ in $\Gamma_Y$, for any $z\in\Gamma_{i+1}(x)\cap\Gamma(y)\cap Y$ we have $x,y\in Y(x,z)\subseteq Y$, so that by replacing $Y$ with $Y(x,z)$ we may assume $w(Y)=i+1$.
Since $\gauss{d-i}{1}_q>\gauss{d-i-1}{1}_q$, it follows from \eqref{descendents containing given two vertices} that there is $C\in\mathscr{P}$ with $x,y\in C$, $Y\not\subseteq C$, $w^*(C)=1$.
Set $Z=Y\cap C$.
Suppose $w(Z)=i+1$ and pick $u,v\in Z$ with $\partial(u,v)=i+1$.
Then $Y=Y(u,v)$.
But since $u,v\in C$ and $\mathscr{P}_C$ satisfies (UD)$_{i+1}$, we would have $Y\subseteq C$, a contradiction.
Hence $w(Z)=i$.
Moreover, $\hat{Z}=\hat{Y}\circ\hat{C}\in\sum_{l=0}^{d-i}E_l\mathbb{C}^X$ by virtue of \eqref{Norton}, whence $w^*(Z)\leqslant d-i$.
By \eqref{fundamental inequality} it follows that $Z\in\mathscr{P}$ and therefore $Y(x,y)=Z\subseteq Y$.
Hence $\mathscr{P}_Y$ satisfies (UD)$_i$ in $\Gamma_Y$.
We have now shown \eqref{descendents satisfy (UD)}.
Finally, we show \eqref{P is closed under intersection}.
Let $Y_1,Y_2\in\mathscr{P}$ and suppose $N:=Y_1\cap Y_2\ne\emptyset$.
Pick $x,y\in N$ such that $\partial(x,y)=w(N)$.
Then $Y(x,y)\subseteq N$ by \eqref{descendents satisfy (UD)}.
Since $Y(x,y)$ is completely regular and has width $w(N)$, for every $z\in N$ we have $w(N)\geqslant w(\{z\}\cup Y(x,y))=\partial(z,Y(x,y))+w(N)$, so that $z\in Y(x,y)$ and thus $N=Y(x,y)\in\mathscr{P}$, as desired.
\end{proof}

\begin{rem}
It should be remarked that $J(2d,d)$, $J_q(2d,d)$, $\mathrm{Bil}_q(d,d)$ are the only examples among the graphs listed in \eqref{characterization of graphs with semilattice-type} which do \emph{not} possess the property that ``the set of all descendents satisfies (UD)$_i$ for all $i$.''
This property seems particularly strong, so that it is a reasonable guess that \eqref{Gamma has classical parameters} would also be redundant.
\end{rem}

\section{Classifications}\label{sec: classifications}

In \S \ref{sec: quantum matroids}, we focused on the $5$ families of distance-regular graphs associated with short regular semilattices (or nontrivial regular quantum matroids).
In this section, we shall extend the classification of descendents to all of the $15$ known infinite families with classical parameters and with unbounded diameter.
We shall freely use \eqref{fundamental inequality}, \eqref{convexity and classical parameters}, \eqref{descendents inherit classical parameters}.

\begin{conv}
Throughout this section, $Y$ shall always denote a nontrivial descendent of $\Gamma$ with width $w$.
Descriptions of some of the graphs below involve $n\in\{2d,2d+1\}$, in which cases we use the following notation:
\begin{equation*}
	m=\begin{cases} 2d-1 & \text{if} \ n=2d, \\ 2d+1 & \text{if} \ n=2d+1.\end{cases}
\end{equation*}
\end{conv}

\subsection*{Doob graphs}

Let $d_1,d_2$ be positive integers, and let $\Gamma=\mathrm{Doob}(d_1,d_2):=\Gamma^1\times\Gamma^2\times\dots\times\Gamma^{d_1+d_2}$ be a Doob graph \cite[\S 9.2B]{BCN1989B}, where $\Gamma^1,\dots,\Gamma^{d_1}$ are copies of the Shrikhande graph $S$ on $16$ vertices and $\Gamma^{d_1+1},\dots,\Gamma^{d_1+d_2}$ are copies of the complete graph $K_4$ on $4$ vertices.
$\Gamma$ has classical parameters $(d,1,0,3)$, where $d=2d_1+d_2$.
In particular, $\iota(\Gamma)=\iota(H(d,4))$.

$\Gamma_Y$ has classical parameters $(w,1,0,3)$, so that $|Y|=4^w$.
Observe that the convex subsets of $X$ are precisely the direct products $Y^1\times Y^2\times\dots\times Y^{d_1+d_2}$ where $Y^1,\dots,Y^{d_1}$ are convex subsets of $V\!S$ and $Y^{d_1+1},\dots,Y^{d_1+d_2}$ are nonempty subsets of $V\!K_4$ (cf.~\cite[Proposition 5.11]{Lambeck1990D}).
Since $S$ has clique number three, it follows that
\begin{thm}\label{Doob graph}
Every descendent of $\mathrm{Doob}(d_1,d_2)$ is of the form $Y=Y^1\times Y^2\times\dots\times Y^{d_1+d_2}$, where either $Y^i = V \Gamma^i$ or $|Y^i|=1$, for each $i$ $(1\leqslant i\leqslant d_1+d_2)$.
\end{thm}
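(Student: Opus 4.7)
The plan is to combine the fact that $Y$ is convex (for every $w\geqslant 1$) with the cited direct-product description of convex subsets, and then squeeze the cardinalities using the identity $|Y|=4^w$ together with the clique number of the Shrikhande graph.

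First I would verify that $Y$ is convex. For $w\geqslant 2$ this is immediate from \eqref{convexity and classical parameters}, since $\Gamma$ has classical parameters $(d,1,0,3)$. For $w=1$, \eqref{descendents inherit classical parameters} gives $\Gamma_Y$ classical parameters $(1,1,0,3)$, hence intersection array $\{3;1\}$, so that $\Gamma_Y\cong K_4$; any four-clique is automatically convex, since every pair of its vertices is adjacent.

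Next I would invoke the quoted description of convex subsets of $\mathrm{Doob}(d_1,d_2)$ (cf.~\cite{Lambeck1990D}) and write $Y=Y^1\times Y^2\times\dots\times Y^{d_1+d_2}$, where each $Y^i$ is convex in $\Gamma^i$ (equivalently, arbitrary nonempty in the $K_4$ factors). Since the path-length distance in a direct product is additive in the factors,
\begin{equation*}
  w=\sum_{i=1}^{d_1+d_2}w_i,\qquad w_i:=\mathrm{diam}(\Gamma^i_{Y^i}),
\end{equation*}
while from $|Y|=\prod_i|Y^i|=4^w$ we obtain
\begin{equation*}
  \prod_{i=1}^{d_1+d_2}|Y^i|=\prod_{i=1}^{d_1+d_2}4^{w_i}.
\end{equation*}

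It therefore suffices to establish the local bound $|Y^i|\leqslant 4^{w_i}$ in each factor, with equality forcing $Y^i$ to be either a singleton or the whole vertex set. For a $K_4$ factor the possibilities are $(w_i,|Y^i|)=(0,1)$ or $(1,j)$ with $2\leqslant j\leqslant 4$, and equality $|Y^i|=4^{w_i}$ holds exactly when $|Y^i|\in\{1,4\}$. For a Shrikhande factor, $w_i\in\{0,1,2\}$ gives $|Y^i|\leqslant 1,3,16$ respectively---the bound $|Y^i|\leqslant 3$ at $w_i=1$ being the clique number of $S$---so $|Y^i|\leqslant 4^{w_i}$ with equality only when $|Y^i|\in\{1,16\}$. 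Multiplying these local inequalities and comparing with the displayed identity forces equality at every factor, whence each $Y^i$ is either a singleton or the whole $V\Gamma^i$.

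The only real obstacle is the local bound in the Shrikhande factors at $w_i=1$, which reduces to the standard fact that $S$ has clique number three; everything else is a direct-product calculation.
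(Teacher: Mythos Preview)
Your proof is correct and takes essentially the same approach as the paper: convexity of $Y$, the cited product decomposition of convex subsets, the identity $|Y|=4^w$, and the clique number of the Shrikhande graph. Your version is simply more explicit---you separately justify convexity for $w=1$ (which falls outside the hypothesis $1<w<d$ of \eqref{convexity and classical parameters}) and spell out the factor-by-factor inequality $|Y^i|\leqslant 4^{w_i}$, whereas the paper compresses all of this into a single sentence.
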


\subsection*{Halved cubes}
Let $\Gamma=\frac{1}{2}H(n,2)$ $(n\in\{2d,2d+1\})$ be a halved cube \cite[\S 9.2D]{BCN1989B} with vertex set $X=X_n^{\varepsilon}$, where $\varepsilon\in\mathbb{F}_2$ and $X_n^0$ (resp. $X_n^1$) is the set of even- (resp. odd-)weight vectors of $\mathbb{F}_2^n$.
$\Gamma$ has classical parameters $(d,1,2,m)$.

For every $x\in\mathbb{F}_2^n$ let $\mathrm{wt}(x)$ be its (Hamming) weight.
First suppose $w>1$.
Fix $x,y\in Y$ with $\partial(x,y)=w$.
For simplicity of notation, we assume $x=(x_1,t)$, $y=(y_1,t)$, where $x_1,y_1\in X_{2w}^{\varepsilon_1}$, $t\in X_{n-2w}^{\varepsilon_2}$ ($\varepsilon_1,\varepsilon_2\in\mathbb{F}_2$, $\varepsilon_1+\varepsilon_2=\varepsilon$) and $\mathrm{wt}(x_1-y_1)=2w$.
Note that $X_{2w}^{\varepsilon_1}\times\{t\}\subseteq Y$ by convexity.
Let $z=(z_1,z_2)\in Y\backslash (X_{2w}^{\varepsilon_1}\times\{t\})$.
Then it follows that $\mathrm{wt}(z_2-t)=1$.
Moreover, for every $u=(u_1,u_2)\in Y\backslash (X_{2w}^{\varepsilon_1}\times\{t\})$ we have $u_2=z_2$, for otherwise by convexity and since $w>1$ there would be a vector $v=(v_1,v_2)\in Y$ such that $\mathrm{wt}(v_2-t)=2$.
It follows that $\Gamma_Y$ has valency at most $\binom{2w+1}{2}=w(2w+1)$; but this is smaller than the expected valency $wm$ except when $n=2d$ and $w=d-1$, in which case we must have $Y=(X_{2w}^{\varepsilon_1}\times\{t\})\cup(X_{2w}^{1+\varepsilon_1}\times\{z_2\})$.

Next suppose $w=1$.
Then $|Y|=m+1$.
On the other hand, $\frac{1}{2}H(n,2)$ has maximal clique sizes $4$ and $n$ \cite[Theorem 14]{Hemmeter1986EJC}, from which it follows that $n=2d$.

\begin{thm}\label{halved cube}
Let $Y$ be a nontrivial descendent of $\frac{1}{2}H(n,2)$.
Then $n=2d$ and one of the following holds:
(i) $w=1$ and $Y=\{x\in X:\mathrm{wt}(x-z)=1\}$ for some $z\in X_n^{1+\varepsilon}=\mathbb{F}_2^n\backslash X$;
(ii)~$w=d-1$ and there are $a\in\mathbb{F}_2$ and $i\in\{1,2,\dots, n\}$ such that $Y=\{x=(\xi_1,\xi_2,\dots,\xi_n)\in X:\xi_i=a\}$.
\end{thm}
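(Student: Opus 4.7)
The plan is to combine convexity of $Y$ (guaranteed by Theorem \eqref{convexity and classical parameters}, since $\frac{1}{2}H(n,2)$ has classical parameters $(d,1,2,m)$) with the inherited classical parameters $(w,1,2,m)$ of $\Gamma_Y$, and then split into the cases $w>1$ and $w=1$.

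For $w>1$, I would fix $x,y\in Y$ with $\partial(x,y)=w$, so that $x$ and $y$ differ in exactly $2w$ coordinates. After a coordinate permutation I may write $x=(x_1,t)$, $y=(y_1,t)$ with $x_1,y_1\in X_{2w}^{\varepsilon_1}$ differing in all $2w$ coordinates and $t\in\mathbb{F}_2^{n-2w}$ fixed. Convexity of $Y$ forces $X_{2w}^{\varepsilon_1}\times\{t\}\subseteq Y$, since any vertex $(u_1,t)$ with $u_1\in X_{2w}^{\varepsilon_1}$ lies on a geodesic from $x$ to $y$. The key step is to analyse a hypothetical $z=(z_1,z_2)\in Y$ outside this layer: distance bounds to points in $X_{2w}^{\varepsilon_1}\times\{t\}$ force $\mathrm{wt}(z_2-t)=1$; and if a second outside vertex $u=(u_1,u_2)$ had $u_2\ne z_2$, then since $w>1$ convexity produces an intermediate $v=(v_1,v_2)\in Y$ with $\mathrm{wt}(v_2-t)=2$, a contradiction. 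Hence all outside vertices share a common second block. The induced subgraph $\Gamma_Y$ then has valency at most $\binom{2w+1}{2}=w(2w+1)$; comparing with the genuine valency $wm$ of $\Gamma_Y$ (from its classical parameters) forces $m=2w+1$, i.e.\ $n=2d$ and $w=d-1$. Rewriting the resulting $Y=(X_{2w}^{\varepsilon_1}\times\{t\})\cup(X_{2w}^{1+\varepsilon_1}\times\{z_2\})$ in the ambient coordinates gives the ``fixed coordinate'' description of (ii).

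For $w=1$, $\Gamma_Y$ is a clique of size $m+1$. By the classification of maximal cliques of $\frac{1}{2}H(n,2)$ in \cite[Theorem 14]{Hemmeter1986EJC}, these have sizes $4$ and $n$; since $d\geqslant 3$ gives $m+1\geqslant 5$, we must be in the second case, so $m+1=n$, hence $n=2d$, and $Y$ has the explicit shape in (i). The main obstacle is the convexity argument in the case $w>1$: pinning down that the outside vertices all share the second block $z_2$ requires delicate use of the hypothesis $w>1$ to manufacture a geodesic whose interior point would violate the weight constraint. Once the valency comparison is in hand, the identification of $n$ and $w$ and the final reformulation of $Y$ are routine.
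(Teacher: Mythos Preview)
Your proposal is correct and follows essentially the same approach as the paper's proof: for $w>1$ you use convexity (from Theorem~\eqref{convexity and classical parameters}) to trap the layer $X_{2w}^{\varepsilon_1}\times\{t\}$ inside $Y$, pin the second block of any outside vertex, and compare the resulting valency bound $\binom{2w+1}{2}$ with the required valency $wm$; for $w=1$ you invoke Hemmeter's maximal-clique classification exactly as the paper does. The level of detail and the sequence of steps match the paper's argument almost verbatim.
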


\subsection*{Hemmeter graphs}

Let $q$ be an \emph{odd} prime power and $\Sigma$ the dual polar graph on $[C_{d-1}(q)]$.
The Hemmeter graph $\Gamma=\mathrm{Hem}_d(q)$ \cite[\S 9.4C]{BCN1989B} is the extended bipartite double of $\Sigma$, so that $\Gamma$ has vertex set $X=X^+ \cup X^-$, where $X^{\pm}=\{x^{\pm}:x \in V\Sigma\}$ are two copies of $V\Sigma$.
$\Gamma$ has classical parameters $(d,q,0,1)$, which coincide with those of the dual polar graph on $[D_d(q)]$.

If $w=1$ then $Y$ is an edge.
Suppose $w>1$.
Let $\mathring{Y}=\{x \in V \Sigma :x^+\in Y\ \text{or}\ x^-\in Y\}$.
Then $\mathring{Y}$ is a convex subset of $V\Sigma$ with width $\mathring{w}\in\{w,w-1\}$.
By \cite[Proposition 5.19]{Lambeck1990D} and \cite[Lemma 10]{Hemmeter1986EJC}, there is an isotropic subspace $u$ of $[C_{d-1}(q)]$ with $\dim u=d-1-\mathring{w}$ such that $\mathring{Y}=I(u):=\{x \in V \Sigma :u \subseteq x\}$ if $\mathring{w}>1$, and $\mathring{Y}\subseteq I(u)$ if $\mathring{w}=1$.
Note that $|Y|=2\prod_{i=1}^{w-1}(q^i+1)$ and $|I(u)|=\prod_{i=1}^{\mathring{w}}(q^i+1)$ (cf.~\cite[Lemma 9.4.1]{BCN1989B}).
If $\mathring{w}=w(>1)$ then $|Y|<|I(u)|=|\mathring{Y}|$, a contradiction.
Hence $\mathring{w}=w-1$.
It follows that

\begin{thm}\label{Hemmeter graph}
Let $Y$ be a nontrivial descendent of $\mathrm{Hem}_d(q)$.
If $w=1$ then $Y$ is an edge.
If $w>1$ then there is an isotropic subspace $u$ of $[C_{d-1}(q)]$ with $\dim u=w^*$ such that $Y=\{x^+,x^-:x \in V \Sigma,\, u\subseteq x\}$.
\end{thm}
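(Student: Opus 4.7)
The case $w=1$ is immediate: any width-$1$ descendent is a pair of adjacent vertices by definition, so $Y$ must be an edge. All of the work is in the case $w\geqslant 2$, and my plan is to reduce it to the known classification of convex subgraphs of the dual polar graph $\Sigma$ on $[C_{d-1}(q)]$ (the ``base'' of the extended bipartite double construction that defines $\mathrm{Hem}_d(q)$).

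The central device is the projection $\pi:X\rightarrow V\Sigma$, $x^{\pm}\mapsto x$, and the image $\mathring{Y}:=\pi(Y)$. By \eqref{convexity and classical parameters} we know $Y$ is convex in $\Gamma$, and \eqref{descendents inherit classical parameters} gives $\Gamma_Y$ classical parameters $(w,q,0,1)$, so in particular $|Y|=2\prod_{i=1}^{w-1}(q^i+1)$. My first step would be to verify that $\mathring{Y}$ is a convex subset of $\Sigma$: since $\partial_{\Gamma}(x^{\epsilon},y^{\delta})$ equals either $\partial_{\Sigma}(x,y)$ or $\partial_{\Sigma}(x,y)+1$ depending on parity, a $\Sigma$-geodesic between points of $\mathring{Y}$ can be lifted to a $\Gamma$-geodesic that remains in $Y$, whose projection is the original geodesic. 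The same kind of parity comparison shows that the width $\mathring{w}$ of $\mathring{Y}$ equals $w$ or $w-1$.

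Next, I would invoke the classification of noncomplete convex subgraphs of dual polar graphs (Lambeck; Hemmeter--style arguments as cited): there is an isotropic subspace $u\subseteq[C_{d-1}(q)]$ with $\dim u=d-1-\mathring{w}$ such that $\mathring{Y}=I(u):=\{x\in V\Sigma:u\subseteq x\}$ when $\mathring{w}\geqslant 2$, and $\mathring{Y}\subseteq I(u)$ when $\mathring{w}=1$. With the standard formula $|I(u)|=\prod_{i=1}^{\mathring{w}}(q^i+1)$ in hand, I would then pin down $\mathring{w}$ by a size comparison: if $\mathring{w}=w$ then $|\mathring{Y}|=|I(u)|$ would exceed $|Y|$, which is impossible because $\pi|_Y$ is at most $2$-to-$1$ onto $\mathring{Y}$. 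Hence $\mathring{w}=w-1$ and $\dim u=w^*$. Finally, $|Y|=2|I(u)|$ together with $|\mathring{Y}|\leqslant|I(u)|$ and $|Y|\leqslant 2|\mathring{Y}|$ forces equality throughout, so $\mathring{Y}=I(u)$ and both $x^+,x^-\in Y$ for every $x\in I(u)$, giving the stated description of $Y$.

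The step I expect to be the main obstacle is the convexity of $\mathring{Y}$, since the extended bipartite double mixes the two fibers in a way that makes the relationship between geodesics in $\Gamma$ and walks in $\Sigma$ slightly delicate; one has to track how signs $\pm$ flip along a $\Gamma$-geodesic and to handle the ``extra'' edges $x^+\sim x^-$ so as to ensure that a shortest path between projected vertices really does live inside $Y$. Once convexity is established, the remainder is the classification-plus-counting routine sketched above.
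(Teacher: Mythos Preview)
Your proposal is correct and follows essentially the same route as the paper: project to $\mathring{Y}\subseteq V\Sigma$, establish convexity and $\mathring{w}\in\{w-1,w\}$, invoke Lambeck's classification of convex subsets of dual polar graphs, and eliminate $\mathring{w}=w$ by the size comparison $|Y|=2\prod_{i=1}^{w-1}(q^i+1)<\prod_{i=1}^{w}(q^i+1)=|I(u)|$ (the contradiction being simply that $\pi$ is surjective onto $\mathring{Y}$, so $|\mathring{Y}|\leqslant|Y|$; the ``$2$-to-$1$'' bound is the one you use afterward). One small correction: the $w=1$ case is not literally ``by definition'' --- width $1$ only says $Y$ is a clique; that $|Y|=2$ comes from $\beta=1$ via \eqref{descendents inherit classical parameters}.
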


\subsection*{Hermitean forms graphs}

In this subsection and the next, we realize $\Gamma$ as ``affine subspaces'' of dual polar graphs.
See \cite[\S 9.5E]{BCN1989B} for the details.

Let $\ell$ be a prime power and $\Delta$ the dual polar graph on $[^2\!A_{2d-1}(\ell)]$.
Fix a vertex $z$ of $\Delta$.
The subgraph $\Gamma=\mathrm{Her}(d,\ell)$ of $\Delta$ induced on $X:=\Delta_d(z)$ is the Hermitean forms graph \cite[\S 9.5C]{BCN1989B}.
$\Gamma$ has classical parameters $(d,-\ell,-\ell-1,-(-\ell)^d-1)$.

By \cite[Proposition 5.30]{Lambeck1990D}, every noncomplete convex subset of $X$ is either of the form $I(u)=\{x\in X:u\subseteq x\}$ where $u$ is an isotropic subspace of $[^2\!A_{2d-1}(\ell)]$ with $\dim u\leqslant d-2$ and $z\cap u=0$, or isomorphic to the $4$-cycle $K_{2,2}$.
The latter case occurs only when $\ell=2$.
Note that $I(u)$ induces $\mathrm{Her}(d-\dim u,\ell)$.
By \cite[Theorem 21]{Hemmeter1986EJC}, the maximal cliques of $\Gamma$ are the $I(u)$ with $\dim u=d-1$.
Comparing the classical parameters it follows that

\begin{thm}\label{Hermitean forms graph}
$\mathrm{Her}(d,\ell)$ has no nontrivial descendent.
\end{thm}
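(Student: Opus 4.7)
The plan is to suppose, for contradiction, that $Y$ is a nontrivial descendent of $\Gamma = \mathrm{Her}(d,\ell)$ with width $w$ (so $1 \leqslant w \leqslant d-1$) and then rule out each possible $w$ by matching classical parameters. I would split the argument on whether $w \geqslant 2$ or $w = 1$, invoking the Lambeck classification of noncomplete convex subsets in the first case and the Hemmeter clique bound in the second, both of which are recalled just before the statement.

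Suppose first that $w \geqslant 2$. Then Theorem \eqref{convexity and classical parameters} makes $Y$ convex, so by \cite[Proposition 5.30]{Lambeck1990D} either $Y = I(u)$ for some isotropic subspace $u$ of $[^2\!A_{2d-1}(\ell)]$ with $z \cap u = 0$ and $\dim u = d-w$, or $Y \cong K_{2,2}$ (only possible when $\ell=2$ and $w=2$). In the first case $\Gamma_Y \cong \mathrm{Her}(w,\ell)$, whose classical parameters are $(w,-\ell,-\ell-1,-(-\ell)^w-1)$; but Theorem \eqref{descendents inherit classical parameters} forces these to coincide with the inherited parameters $(w,-\ell,-\ell-1,-(-\ell)^d-1)$. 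Comparing $\beta$ gives $(-\ell)^w = (-\ell)^d$, which fails for $w < d$. In the $K_{2,2}$ case, I would instead read $b_0$ off the formula \eqref{classical parameters} applied to $(w,q,\alpha,\beta) = (2,-2,-3,-(-2)^d-1)$; a short computation yields $b_0 = (-2)^d + 1$, which is never equal to the valency $2$ of $K_{2,2}$ for any $d \geqslant 3$.

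For $w = 1$, the descendent $\Gamma_Y$ is a clique, and Theorem \eqref{descendents inherit classical parameters} together with \eqref{classical parameters} gives $|Y| = b_0 + 1 = \beta + 1 = -(-\ell)^d$. When $d$ is even this value is negative, an immediate contradiction; when $d$ is odd it equals $\ell^d$, which for $d \geqslant 3$ and $\ell \geqslant 2$ strictly exceeds $\ell$, the maximum clique size of $\mathrm{Her}(d,\ell)$ (since every maximal clique is of the form $I(u)$ with $\dim u = d-1$ by \cite[Theorem 21]{Hemmeter1986EJC} and $|I(u)| = |\mathrm{Her}(1,\ell)| = \ell$). The main obstacle is essentially none: the heavy lifting has been done by Lambeck and Hemmeter, and the argument reduces to parameter-matching via the two structural theorems of \S \ref{sec: convexity and classical parameters}. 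The only mild subtlety is checking that the $K_{2,2}$ side case yields no accidental descendent, which is routine arithmetic.
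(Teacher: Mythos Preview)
Your proof is correct and follows essentially the same route as the paper: split on $w\geqslant 2$ versus $w=1$, use convexity plus Lambeck's classification \cite[Proposition 5.30]{Lambeck1990D} in the first case and Hemmeter's clique result \cite[Theorem 21]{Hemmeter1986EJC} in the second, then compare classical parameters via \eqref{convexity and classical parameters} and \eqref{descendents inherit classical parameters}. The paper compresses all of this into the single phrase ``Comparing the classical parameters it follows that\dots''; you have simply written out the arithmetic that this phrase hides.
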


\subsection*{Alternating forms graphs}

Let $\ell$ be a prime power and $\Delta$ the dual polar graph on $[D_n(\ell)]$, where $n\in\{2d,2d+1\}$.
Fix a vertex $z$ of $\Delta$.
The subgraph $\Gamma=\mathrm{Alt}(n,\ell)$ of the distance-$2$ graph of $\Delta$ induced on $X:=\Delta_n(z)$ is the alternating forms graph \cite[\S 9.5B]{BCN1989B}, i.e., $\Gamma=\Delta_n^2(z)$ with the notation of \S \ref{sec: bipartite case}.
$\Gamma$ has classical parameters $(d,\ell^2,\ell^2-1,\ell^m-1)$.

By \cite[Proposition 5.26]{Lambeck1990D}, every noncomplete convex subset of $X$ is of the form $I(u)=\{x\in X:u\subseteq x\}$ where $u$ is an isotropic subspace of $[D_n(\ell)]$ with $\dim u\leqslant n-4$ and $z\cap u=0$.
Note that $I(u)$ induces $\mathrm{Alt}(n-\dim u,\ell)$.
By \cite[Lemma 19]{Hemmeter1986EJC}, $\Gamma$ has clique number $\ell^{n-1}$ and the maximum cliques are of the form $C(x,v)=\{x\}\cup\{y\in\Gamma(x):x\cap y\subseteq v\}$ where $x\in X$ and $v$ is a subspace of $x$ with $\dim v=n-1$.
It follows that

\begin{thm}\label{alternating forms graph}
Let $Y$ be a nontrivial descendent of $\mathrm{Alt}(n,\ell)$.
Then $n=2d$ and $Y$ takes one of the following forms:
(i) $w=1$ and $Y=C(x,v)$;
(ii) $w=d-1$ and $Y=I(u)$ with $\dim u=1$.
\end{thm}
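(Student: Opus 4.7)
The plan is to split into two cases according to whether $w=1$ or $w\geqslant 2$, and in each case to use the classical parameters of $\Gamma_Y$ inherited via Theorem \eqref{descendents inherit classical parameters} as the main numerical constraint, combined with the two structural facts quoted just before the theorem: Lambeck's description of the noncomplete convex subsets and Hemmeter's determination of the clique number of $\mathrm{Alt}(n,\ell)$.

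Suppose first that $w\geqslant 2$. Then by Theorem \eqref{convexity and classical parameters} the set $Y$ is convex, and $\Gamma_Y$ has diameter $w\geqslant 2$, so $Y$ is not a clique. By \cite[Proposition 5.26]{Lambeck1990D}, $Y=I(u)$ for some isotropic subspace $u$ of $[D_n(\ell)]$ with $\dim u\leqslant n-4$ and $z\cap u=0$, and in that case $\Gamma_Y\cong\mathrm{Alt}(n',\ell)$ where $n'=n-\dim u$. By Theorem \eqref{descendents inherit classical parameters}, $\Gamma_Y$ has classical parameters $(w,\ell^2,\ell^2-1,\ell^m-1)$, whereas as an alternating forms graph $\mathrm{Alt}(n',\ell)$ has classical parameters $(w,\ell^2,\ell^2-1,\ell^{m'}-1)$ with $m'=2w-1$ if $n'=2w$ and $m'=2w+1$ if $n'=2w+1$. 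Matching the $\beta$-values forces $m'=m$. If $n=2d+1$, then $m=2d+1$ forces $m'=2w+1$ and hence $w=d$, contradicting nontriviality. If $n=2d$, then $m=2d-1$ and, since $w<d$, the only possibility is $m'=2w+1=2d-1$, yielding $w=d-1$ and $\dim u=1$; this is case (ii).

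Suppose now that $w=1$. Then $\Gamma_Y$ has classical parameters $(1,\ell^2,\ell^2-1,\ell^m-1)$, so it is the complete graph on $\ell^m$ vertices and $Y$ is a clique of size $\ell^m$ in $\Gamma$. By \cite[Lemma 19]{Hemmeter1986EJC} the clique number of $\mathrm{Alt}(n,\ell)$ equals $\ell^{n-1}$, so $m\leqslant n-1$. This rules out $n=2d+1$ (for which $m=2d+1=n$), and leaves $n=2d$ with $m=n-1$; in this case $Y$ has cardinality equal to the clique number, i.e., $Y$ is a maximum clique, and therefore has the form $C(x,v)$ for some $x\in X$ and some $(n-1)$-dimensional subspace $v$ of $x$. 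This is case (i).

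The only real obstacle is the bookkeeping between the two cases $n\in\{2d,2d+1\}$ and their associated values of $m$. Once Theorem \eqref{descendents inherit classical parameters} reduces the question to matching the $\beta$-parameter, the dimension $\dim u$ is pinned down uniquely in the $w\geqslant 2$ case, and the $w=1$ case is settled by a clique-number comparison. The convexity statement from Theorem \eqref{convexity and classical parameters} is the essential bridge that makes Lambeck's classification available in the first place.
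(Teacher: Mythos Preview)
Your proof is correct and follows essentially the same route as the paper: for $w\geqslant 2$ you invoke convexity (Theorem \eqref{convexity and classical parameters}) and Lambeck's classification of noncomplete convex subsets, then match the $\beta$-parameter via Theorem \eqref{descendents inherit classical parameters}; for $w=1$ you compare $|Y|=\ell^m$ against Hemmeter's clique number $\ell^{n-1}$. The paper's ``It follows that'' is exactly this argument, only left implicit; your write-up simply spells out the bookkeeping between $n\in\{2d,2d+1\}$ and the corresponding values of $m$ and $m'$.
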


\subsection*{Quadratic forms graphs}

Let $\ell$ be a prime power and $X$ the set of quadratic forms on $V=\mathbb{F}_{\ell}^{n-1}$ over $\mathbb{F}_{\ell}$, where $n\in\{2d,2d+1\}$.
For $x\in X$ let $\mathrm{Rad}\,x=x^{-1}(0)\cap\mathrm{Rad}\,B_x$ and $\mathrm{rk}\,x=\dim(V/\mathrm{Rad}\,x)$, where $B_x$ is the symmetric bilinear form associated with $x$ and $\mathrm{Rad}\,B_x$ denotes its radical.
Let $\Gamma=\mathrm{Quad}(n-1,\ell)$ be the quadratic forms graph on $V$ \cite[\S 9.6]{BCN1989B}:
$x,y\in X$ are adjacent if $\mathrm{rk}(x-y)=1$ or $2$.
$\Gamma$ has classical parameters $(d,\ell^2,\ell^2-1,\ell^m-1)$.
We remark that if $\ell$ is odd then $\Gamma$ is isomorphic to the subgraph of the distance $1$-or-$2$ graph of the dual polar graph $\Sigma$ on $[C_{n-1}(\ell)]$ induced on $\Sigma_{n-1}(z)$ with $z \in V \Sigma$, or equivalently, $\Gamma$ is isomorphic to $\Delta_n^2(z)$, where $\Delta=\mathrm{Hem}_n(\ell)$ and $z \in V \! \Delta$; see \cite{CHM2008DM}.

By \cite[Proposition 5.36]{Lambeck1990D}, \cite[Theorem 1.2]{MPS1993JAC}, every noncomplete convex subset of $X$ is of the form\footnote{\label{embedding in symplectic dual polar graph}If $\ell$ is odd, then in terms of the above identification $X=\Sigma_{n-1}(z)$, every noncomplete convex subgraph of $\Gamma$ is rewritten as $I(u)=\{y\in X:u\subseteq y\}$ for some isotropic subspace of $[C_{n-1}(\ell)]$ with $\dim u\leqslant n-4$ and $z\cap u = 0$.} $I(x,u)=\{y\in X:\mathrm{Rad}(x-y)\supseteq u\}$ where $x\in X$ and $u$ is a subspace of $V$ with $\dim u\leqslant n-4$.
Note that $I(x,u)$ induces $\mathrm{Quad}(n-1-\dim u,\ell)$.
By \cite[Theorems 14, 24, 26]{Hemmeter1988EJC}, $\Gamma$ has clique number $\ell^{n-1}$ and there are two types of maximum cliques:
A \emph{type} $1$ clique is defined to be $C(x)=\{y\in X:\mathrm{rk}(x-y)\leqslant 1\}$ where $x\in X$.
If $q$ is odd then a \emph{type} $2$ clique is defined to be $C(x,u)=\{y\in X:B_y|_{u\times u}=B_x|_{u\times u}\}$ where $x\in X$ and $u$ is a subspace of $V$ with $\dim u=n-2$.
If $q$ is even then it is defined to be $C(x,u,\gamma,a)=\{y\in X:B_y|_{u\times u}=B_x|_{u\times u},\ (x-y)(\xi)=a((B_x-B_y)(\gamma,\xi))^2\ \text{for all}\ \xi\in u\}$ where $x\in X$, $u$ a subspace of $V$ with $\dim u=n-2$, $\gamma\in V\backslash u$ and $a\in\mathbb{F}_{\ell}$.
See also \cite{BHW1995EJC,HW1999EJC}.
Cliques of types $1$ and $2$ are called \emph{grand cliques}.
It follows that

\begin{thm}\label{quadratic forms graph}
Let $Y$ be a nontrivial descendent of $\mathrm{Quad}(n-1,\ell)$.
Then $n=2d$ and $Y$ takes one of the following forms:
(i) $w=1$ and $Y$ is a grand clique;
(ii) $w=d-1$ and $Y=I(x,u)$ with $\dim u=1$.
\end{thm}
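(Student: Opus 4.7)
The plan is to argue in the same style as for \eqref{Hermitean forms graph} and \eqref{alternating forms graph}: combine convexity of $Y$ (from \eqref{convexity and classical parameters}) with the classification of noncomplete convex subgraphs of $\Gamma$ due to Lambeck and Munemasa--Pasechnik--Shpectorov, and the classification of maximum cliques due to Hemmeter and Wan, and then match classical parameters using \eqref{descendents inherit classical parameters}. The only real content beyond these black boxes is a parity/size bookkeeping; no new geometric construction is needed.

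First, suppose $w\geqslant 2$. Then $Y$ is convex but noncomplete, so by \cite[Proposition 5.36]{Lambeck1990D} and \cite[Theorem 1.2]{MPS1993JAC} we have $Y=I(x,u)$ for some $x\in X$ and some subspace $u\subseteq V$ with $\dim u\leqslant n-4$. Consequently $\Gamma_Y\cong\mathrm{Quad}(n-1-\dim u,\ell)$, and so $\Gamma_Y$ has classical parameters $(w,\ell^2,\ell^2-1,\ell^{m'}-1)$ with $m'=2w-1$ or $m'=2w+1$ according to whether $n-\dim u$ equals $2w$ or $2w+1$. By \eqref{descendents inherit classical parameters}, this quadruple must equal $(w,\ell^2,\ell^2-1,\ell^m-1)$, forcing $m'=m$. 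Since $w\leqslant d-1$ and $m\in\{2d-1,2d+1\}$, a case analysis on the two possible parities of $n$ and $n-\dim u$ rules out every option except $n=2d$, $w=d-1$, $\dim u=1$, yielding case (ii).

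For $w=1$, \eqref{descendents inherit classical parameters} gives $|Y|=\beta+1=\ell^m$. On the other hand, by \cite[Theorems 14, 24, 26]{Hemmeter1988EJC} (see also \cite{BHW1995EJC,HW1999EJC}), $\Gamma$ has clique number $\ell^{n-1}$ and its maximum cliques are precisely the grand cliques. Hence $\ell^m\leqslant\ell^{n-1}$, which combined with $n\in\{2d,2d+1\}$ and $m\in\{2d-1,2d+1\}$ forces $n=2d$; then $|Y|=\ell^{n-1}$ attains the clique number, so that $Y$ is a grand clique, giving case (i). The main potential obstacle is ensuring that the Lambeck--MPS list of convex subgraphs is genuinely exhaustive in both parities of $\ell$ (the odd-$\ell$ case uses the identification of $\Gamma$ with $\mathrm{Hem}_n(\ell)_n^2(z)$ noted in footnote \ref{embedding in symplectic dual polar graph}); once that is granted, the rest is the routine parameter matching sketched above.
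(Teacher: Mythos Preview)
Your proposal is correct and follows exactly the approach the paper uses (which is simply the phrase ``It follows that'' after laying out the classification of noncomplete convex subsets and of maximum cliques). You have merely made explicit the parameter-matching step that the paper leaves to the reader, and your case analysis forcing $m'=m$, hence $n=2d$ with $\dim u=1$ (for $w\geqslant 2$) and $|Y|=\ell^m=\ell^{n-1}$ (for $w=1$), is the intended one.
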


\subsection*{Unitary dual polar graphs with second $Q$-polynomial ordering}

Let $\ell$ be a prime power and $\Gamma=U(2d,\ell)$ the dual polar graph on $[^2\! A_{2d-1}(\ell)]$.
Besides the ordinary $(d,\ell^2,0,\ell)$, $\Gamma$ has another classical parameters $(d,-\ell,\alpha,\beta)$, where $\alpha+1=\frac{1+\ell^2}{1-\ell}$ and $\beta+1=\frac{1-(-\ell)^{d+1}}{1-\ell}$ \cite[Corollary 6.2.2]{BCN1989B}.
Here we consider the standard $Q$-polynomial ordering with respect to the latter classical parameters; it is $\{E_0,E_d,E_1,E_{d-1},\dots\}$ in terms of the ordinary ordering $\{E_i\}_{i=0}^d$.

By \cite[Proposition 5.19]{Lambeck1990D}, every noncomplete convex subset of $X$ is of the form $I(u)=\{x\in X:u\subseteq x\}$ for some isotropic subspace $u$ of $[^2\! A_{2d-1}(\ell)]$ with $\dim u=w^*$.
By \cite[Lemma 10]{Hemmeter1986EJC}, the maximal cliques of $\Gamma$ are the $I(u)$ with $\dim u=d-1$.
It follows that

\begin{thm}\label{unitary dual polar graph}
$U(2d,\ell)$ (with second $Q$-polynomial ordering) has no nontrivial descendent.
\end{thm}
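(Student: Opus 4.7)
The plan is to derive a numerical contradiction by combining the classification of noncomplete convex subsets of $\Gamma$ cited just above with the parameter-inheritance result \eqref{descendents inherit classical parameters}. Suppose, for contradiction, that $Y$ is a nontrivial descendent of $\Gamma=U(2d,\ell)$ with width $w$ in the second $Q$-polynomial ordering. Then by \eqref{convexity and classical parameters} (or trivially when $w=1$, since any clique is convex) $Y$ is convex, and by \eqref{descendents inherit classical parameters} $\Gamma_Y$ is distance-regular with classical parameters $(w,-\ell,\alpha,\beta)$ carrying the \emph{same} scalars $\alpha$ and $\beta$ as $\Gamma$, namely $\alpha+1=(1+\ell^2)/(1-\ell)$ and $\beta+1=(1-(-\ell)^{d+1})/(1-\ell)$.

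Suppose first that $w\geqslant 2$. Then $Y$ contains two vertices at distance $w\geqslant 2$ and is hence noncomplete, so by the Lambeck classification invoked above, $Y=I(u)$ for some isotropic subspace $u$ with $\dim u=d-w$, and therefore $\Gamma_Y\cong U(2w,\ell)$. By \cite[Corollary~6.2.2]{BCN1989B}, the only classical parameters of $U(2w,\ell)$ with $q=-\ell$ are $(w,-\ell,\alpha_w,\beta_w)$ with $\alpha_w+1=(1+\ell^2)/(1-\ell)$ and $\beta_w+1=(1-(-\ell)^{w+1})/(1-\ell)$; matching the expected value $\beta_w=\beta$ forces $(-\ell)^{w+1}=(-\ell)^{d+1}$, which with $w\leqslant d-1$ is absurd.

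If instead $w=1$, then $\Gamma_Y=K_{|Y|}$ and consequently $|Y|=\beta+1=(1-(-\ell)^{d+1})/(1-\ell)$. On the other hand, since every maximal clique of $\Gamma$ is an $I(u)$ with $\dim u=d-1$, which induces $U(2,\ell)=K_{\ell+1}$, we have $|Y|\leqslant \ell+1$. But when $d$ is even the expression $(1-(-\ell)^{d+1})/(1-\ell)=(1+\ell^{d+1})/(1-\ell)$ is negative (recall $\ell\geqslant 2$), and when $d$ is odd it equals $1+\ell+\cdots+\ell^d\geqslant 1+\ell+\ell^2+\ell^3>\ell+1$ (recall $d\geqslant 3$); either way we reach a contradiction.

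There is no serious obstacle beyond checking that no ``third'' $Q$-polynomial ordering of $U(2w,\ell)$ is available in the $w\geqslant 2$ case, which is immediate from the BCN enumeration. The whole argument is thus a one-line parameter comparison in the $w\geqslant 2$ case and a short size estimate in the $w=1$ case, both mediated by \eqref{descendents inherit classical parameters} together with the already-cited classification of convex subgraphs and maximal cliques of the unitary dual polar graph.
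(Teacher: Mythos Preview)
Your argument is correct and follows essentially the same approach as the paper. The paper's proof is extremely terse (just ``It follows that''), leaving the reader to supply exactly the parameter comparison you carry out: for $w\geqslant 2$, the Lambeck classification forces $\Gamma_Y\cong U(2w,\ell)$, whose second classical parameters have $\beta_w+1=(1-(-\ell)^{w+1})/(1-\ell)\ne\beta+1$; for $w=1$, the Hemmeter clique bound $|Y|\leqslant\ell+1$ is incompatible with $|Y|=\beta+1$. Your aside about a ``third $Q$-polynomial ordering'' is slightly misdirected---the real point is that once $q=-\ell$ is fixed, $\alpha,\beta$ are determined by the intersection array---but this does not affect the validity of the argument.
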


\subsection*{Half dual polar graphs}

Let $\ell$ be a prime power and $\Delta$ the bipartite dual polar graph on $[D_n(\ell)]$ with $V \! \Delta=X^+ \cup X^-$, where $n \in \{2d,2d+1\}$ and $X^+,X^-$ are bipartite halves of $\Delta$.
The path-length distance for $\Delta$ is denoted $\partial_{\Delta}$.
Let $\Gamma=D_{n,n}(\ell)$ be a halved graph of $\Delta$ with vertex set $X=X^{\varepsilon}$ where $\varepsilon\in\{+,-\}$, whence $2\partial=\partial_{\Delta}|_{X\times X}$ \cite[\S 9.4C]{BCN1989B}.
$\Gamma$ has classical parameters $(d,\ell^2,\alpha,\beta)$, where $\alpha+1=\gauss{3}{1}_{\ell}$ and $\beta+1=\gauss{m+1}{1}_{\ell}$.
Pepe, Storme and Vanhove \cite[\S 5]{PSV2010pre} recently classified the descendents of $\Gamma$ when $n=2d$ and $w=d-1$, partly based on the Erd\H{o}s--Ko--Rado theorem for Grassmann graphs (cf.~\cite[Theorem 3]{Tanaka2006JCTA}).
Our approach below uses \eqref{alternating forms graph} instead.

First suppose $w>1$.
Fix $x \in Y$ and pick any $z\in\Delta_{n-2w}(x)$ such that $\Delta_n(z)\cap Y\ne\emptyset$.
By \eqref{intersection of descendent of bipartite half with last subconstituent}, \eqref{alternating forms graph} we find $n=2d$, $w=d-1$ and there is an isotropic subspace $u$ of $[D_{2d}(\ell)]$ with $\dim u=1$ and $z\cap u=0$ such that $Y_0:=\Delta_{2d}(z)\cap Y=\{y\in\Delta_{2d}(z):u\subseteq y\}$.
Note that $z\in\Gamma(x)\backslash Y$ and $Y_0=\Gamma_d(z)\cap Y\subseteq (\Gamma_Y)_{d-1}(x)$.
For convenience, let $\Xi$ be the dual polar graph on the residual polar space of $u$, so that $V\Xi=\{y\in V\!\Delta:u\subseteq y\}$.

\begin{note}\label{x contains u: half dual polar}
$x\in V\Xi$.
\end{note}

\begin{proof}
Suppose $u \not\subseteq x$.
Let $x^{\dagger}=u+(x\cap u^{\perp}),z^{\dagger}=u+(z\cap u^{\perp})\in V\Xi$.
Then $\partial_{\Delta}(x,y)=\partial_{\Delta}(x^{\dagger},y)+1$ for any $y\in V\Xi$, and similarly for $z^{\dagger}$.
Since $Y_0\subseteq\Xi_{2d-3}(x^{\dagger})$ we find $x^{\dagger}\in\Delta_3(z)$, i.e., $x^{\dagger}\in\Xi_2(z^{\dagger})$.
But then $\Gamma_d(x)\cap Y_0=\Xi_{2d-1}(x^{\dagger})\cap\Xi_{2d-1}(z^{\dagger})\ne\emptyset$, a contradiction.
\end{proof}

\begin{note}\label{elements of last subconstituent contain u: half dual polar}
$(\Gamma_Y)_{d-1}(x)\subseteq V\Xi$.
\end{note}

\begin{proof}
Let $z_1\in\Gamma(x)\backslash Y$.
Since $Y$ is completely regular in $\Gamma$, $Y_1:=\Gamma_d(z_1)\cap Y\ne\emptyset$.
Let $u_1$ be the isotropic subspace of $[D_{2d}(\ell)]$ with $\dim u_1=1$, $z_1\cap u_1=0$ and $Y_1=\{y\in\Gamma_d(z_1):u_1 \subseteq y\}$.
Note that $|Y_0|=|Y_1|=\ell^{(d-1)(2d-1)}$ and $|(\Gamma_Y)_{d-1}(x)|=\prod_{i=1}^{d-1}(b_{i-1}(\Gamma_Y)/c_i(\Gamma_Y))=\ell^{(d-1)(2d-3)}\gauss{2d-1}{1}_{\ell}$.
Suppose $u\ne u_1$.
Then $u+u_1\subseteq x$ by \eqref{x contains u: half dual polar}, so that by looking at the residual polar space of $u+u_1$ we find $|Y_0\cap Y_1|\leqslant |\mathrm{Alt}(2d-2,\ell)|=\ell^{(d-1)(2d-3)}$.
If $\ell>2$ then $|Y_0\cap Y_1|\geqslant |Y_0|+|Y_1|-|(\Gamma_Y)_{d-1}(x)|>\ell^{(d-1)(2d-3)}$, a contradiction.
Hence $\ell=2$, $Y_0\cup Y_1=(\Gamma_Y)_{d-1}(x)$ and $Y_0\cap Y_1=\{y\in(\Gamma_Y)_{d-1}(x):u+u_1\subseteq y\}$, from which it follows that $Y_0=\{y\in(\Gamma_Y)_{d-1}(x):u\subseteq y\}$, $Y_1=\{y\in(\Gamma_Y)_{d-1}(x):u_1\subseteq y\}$.
Pick $v \in Y_0\cap Y_1$.
Since $a_{d-1}(\Gamma)>a_{d-1}(\Gamma_Y)$ there is $z_2 \in \Gamma(x)\cap\Gamma_{d-1}(v)\backslash Y$.
Let $u_2$ be the isotropic subspace of $[D_{2d}(2)]$ with $\dim u_2=1$, $z_2\cap u_2=0$ and $Y_2:=\Gamma_d(z_2)\cap Y=\{y\in\Gamma_d(z_2):u_2\subseteq y\}$.
Since $v \not \in Y_2$ we find $Y_2 \ne Y_0,Y_1$, so that $u_2\ne u,u_1$, and thus $|Y_0\cap Y_2|,|Y_1\cap Y_2|\leqslant 2^{(d-1)(2d-3)}$ by the above argument.
But $\max \{|Y_0\cap Y_2|,|Y_1 \cap Y_2|\} \geqslant \frac{1}{2}|Y_2|>2^{(d-1)(2d-3)}$, a contradiction.
Hence $u=u_1$.
Since $(\Gamma_Y)_{d-1}(x)=\bigcup_{z_1\in\Gamma(x)\backslash Y}\Gamma_d(z_1)\cap Y$, the proof is complete.
\end{proof}

It follows from \eqref{x contains u: half dual polar}, \eqref{elements of last subconstituent contain u: half dual polar} that $Y\subseteq V\Xi\cap X$, so that $\Gamma_Y$ is a subgraph of a halved graph of $\Xi$.
Since $\iota(\Gamma_Y)=\iota(D_{2d-1,2d-1}(\ell))$ we conclude $Y=V\Xi\cap X$.

Next suppose $w=1$.
Then $|Y|=\gauss{m+1}{1}_{\ell}$.
By \cite[Lemma 12]{Hemmeter1986EJC} or \cite[Theorem 3.5]{BH1992EJC}, $\Gamma$ has clique number $\gauss{2d}{1}_{\ell}$ and the maximum cliques are of the form $\Delta(z)$ where $z\in V\!\Delta\backslash X$.

\begin{thm}\label{half dual polar graph}
Let $Y$ be a nontrivial descendent of $D_{n,n}(\ell)$.
Then $n=2d$ and one of the following holds:
(i) $w=1$ and $Y=\Delta(z)$ for some $z\in V \!\Delta\backslash X$;
(ii) $w=d-1$ and $Y=\{x \in X:u \subseteq x\}$ for some isotropic subspace $u$ of $[D_{2d}(\ell)]$ with $\dim u=1$.
\end{thm}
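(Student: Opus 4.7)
The plan is to split into the two cases $w=1$ and $w>1$, treating $w>1$ as the principal case. The key idea is to exploit the bipartite double cover structure: $\Gamma$ is a halved graph of the bipartite dual polar graph $\Delta$ on $[D_n(\ell)]$, so I can use \eqref{intersection of descendent of bipartite half with last subconstituent} to transfer a nontrivial descendent $Y$ of $\Gamma$ to a descendent of $\Delta_n^2(z)\cong\mathrm{Alt}(n,\ell)$ for a suitably chosen $z$, and then invoke the classification \eqref{alternating forms graph}.

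More concretely, for $w>1$ I would fix a base vertex $x\in Y$ and choose $z\in V\!\Delta\setminus X$ so that the hypotheses of \eqref{intersection of descendent of bipartite half with last subconstituent} are satisfied, giving that $Y_0:=\Delta_n(z)\cap Y$ is a nontrivial descendent of $\Delta_n^2(z)\cong\mathrm{Alt}(n,\ell)$ of width $w$. Since $\mathrm{Alt}(n,\ell)$ admits no nontrivial descendent unless $n=2d$ and $w\in\{1,d-1\}$, and the width of $Y_0$ equals $w>1$, this forces $n=2d$, $w=d-1$, and produces an isotropic line $u$ of $[D_{2d}(\ell)]$ with $z\cap u=0$ such that $Y_0=\{y\in\Delta_{2d}(z):u\subseteq y\}$. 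Passing to the residual polar space $V\Xi=\{y\in V\!\Delta:u\subseteq y\}$, the goal is then to prove $Y\subseteq V\Xi\cap X$; combined with the equality $\iota(\Gamma_Y)=\iota(D_{2d-1,2d-1}(\ell))$ (consequence of \eqref{descendents inherit classical parameters}), this upgrades to $Y=V\Xi\cap X$, establishing (ii).

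The crux, and the step I expect to be the main obstacle, is the pair of containments $u\subseteq x$ and $u\subseteq y'$ for every $y'\in(\Gamma_Y)_{d-1}(x)$. For the first, I would argue by contradiction, replacing $x$ and $z$ by their closures $u+(x\cap u^{\perp})$ and $u+(z\cap u^{\perp})$ inside $\Xi$ and tracking how distances inside $\Delta$ relate to those inside $\Xi$, contradicting the existence of a vertex of $Y_0$ at distance $d$ from $x$. For the second, letting $z_1$ range over $\Gamma(x)\setminus Y$, each such $z_1$ produces a candidate line $u_1$ with $\Gamma_d(z_1)\cap Y=\{y\in\Gamma_d(z_1):u_1\subseteq y\}$, and a careful cardinality comparison of $|Y_0|+|Y_1|$ with $|(\Gamma_Y)_{d-1}(x)|$, using the intersection numbers of $D_{2d-1,2d-1}(\ell)$ and a bound $|Y_0\cap Y_1|\leqslant|\mathrm{Alt}(2d-2,\ell)|$ (obtained by looking inside the residual polar space of $u+u_1$), forces $u=u_1$. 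This count is strict unless $\ell=2$; in the $\ell=2$ subcase the estimates are tight and one must bring in a third candidate $u_2$ arising from a vertex $v\in Y_0\cap Y_1$ together with some $z_2\in\Gamma(x)\cap\Gamma_{d-1}(v)\setminus Y$ to derive a final contradiction.

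The case $w=1$ is much easier: from the intersection numbers of $\Gamma_Y$ one has $|Y|=\gauss{m+1}{1}_{\ell}$, which matches the maximum clique size $\gauss{2d}{1}_{\ell}$ of $D_{n,n}(\ell)$ only when $n=2d$, and the classification of maximum cliques of half dual polar graphs from \cite[Theorem 3.5]{BH1992EJC} (or \cite[Lemma 12]{Hemmeter1986EJC}) identifies $Y$ as a neighborhood $\Delta(z)$ with $z\in V\!\Delta\setminus X$, giving (i).
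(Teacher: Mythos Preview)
Your proposal is correct and follows essentially the same approach as the paper: split on $w$, and for $w>1$ apply \eqref{intersection of descendent of bipartite half with last subconstituent} together with the classification \eqref{alternating forms graph} for $\mathrm{Alt}(n,\ell)$ to force $n=2d$, $w=d-1$ and produce the line $u$; then prove $u\subseteq x$ (the paper's \eqref{x contains u: half dual polar}) and $(\Gamma_Y)_{d-1}(x)\subseteq V\Xi$ (the paper's \eqref{elements of last subconstituent contain u: half dual polar}, including the $\ell=2$ subcase handled via a third vertex $z_2$), whence $Y=V\Xi\cap X$ by comparison of intersection arrays; the case $w=1$ is dispatched via $|Y|=\gauss{m+1}{1}_\ell$ and the known maximum cliques of $D_{n,n}(\ell)$. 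The details you sketch for each step, including the closure trick with $u+(x\cap u^{\perp})$ and the cardinality bound $|Y_0\cap Y_1|\leqslant|\mathrm{Alt}(2d-2,\ell)|$, match the paper's arguments precisely.
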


\subsection*{Ustimenko graphs}

Let $\ell$ be an \emph{odd} prime power and $\Sigma$ the dual polar graph on $[C_{n-1}(\ell)]$ with vertex set $X=V\Sigma$, where $n\in\{2d,2d+1\}$.
The path-length distance for $\Sigma$ is denoted $\partial_{\Sigma}$.
The Ustimenko graph $\Gamma=\mathrm{Ust}_{n-1}(\ell)$ is the distance $1$-or-$2$ graph of $\Sigma$, whence $\partial=\lceil\frac{1}{2}\partial_{\Sigma}\rceil$ \cite[\S 9.4C]{BCN1989B}.
$\Gamma$ has classical parameters $(d,\ell^2,\alpha,\beta)$, where $\alpha+1=\gauss{3}{1}_{\ell}$ and $\beta+1=\gauss{m+1}{1}_{\ell}$.
Note that $\Gamma$ may also be viewed as a halved graph of $\Delta=\mathrm{Hem}_n(\ell)$.
Pepe et al.~\cite[\S 7]{PSV2010pre} classified the descendents of $\Gamma$ when $n=2d$ and $w=d-1$ by a different approach from the following.

First suppose $w>1$.
Fix $x\in Y$ and pick any $z\in\Delta_{n-2w}(x)$ such that $\Delta_n(z)\cap Y\ne\emptyset$.
By \eqref{intersection of descendent of bipartite half with last subconstituent}, \eqref{quadratic forms graph} we find $n=2d$, $w=d-1$ and there is an isotropic subspace $u$ of $[C_{2d-1}(\ell)]$ with $\dim u=1$ and $z\cap u=0$ such that $Y_0:=\Delta_{2d}(z)\cap Y=\{y\in\Delta_{2d}(z):u\subseteq y\}$.\footnote{See footnote \ref{embedding in symplectic dual polar graph}.}
Note that $z\in\Gamma (x)\backslash Y$ and $Y_0=\Gamma_d(z)\cap Y=\Sigma_{2d-1}(z)\cap Y\subseteq(\Gamma_Y)_{d-1}(x)$.
Let $\Xi$ be the dual polar graph on the residual polar space of $u$, so that $V\Xi=\{y\in V\Sigma:u\subseteq y\}$.

\begin{note}\label{x contains u: Ustimenko}
$x\in V\Xi$.
\end{note}

\begin{proof}
Suppose $u\not\subseteq x$.
Let $x^{\dagger}=u+(x\cap u^{\perp}),z^{\dagger}=u+(z\cap u^{\perp})\in V\Xi$.
Then $\Gamma_d(x)\cap Y_0=\Xi_{2d-2}(x^{\dagger})\cap\Xi_{2d-2}(z^{\dagger})\ne\emptyset$, a contradiction.
\end{proof}

\begin{note}\label{elements of last subconstituent contain u: Ustimenko}
$(\Gamma_Y)_{d-1}(x)\subseteq V\Xi$.
\end{note}

\begin{proof}
Let $z_1\in\Gamma(x)\backslash Y$.
Since $Y$ is completely regular in $\Gamma$, $Y_1:=\Gamma_d(z_1)\cap Y\ne\emptyset$.
Let $u_1$ be the isotropic subspace of $[C_{2d-1}(\ell)]$ with $\dim u_1=1$, $z_1\cap u_1=0$ and $Y_1=\{y\in\Gamma_d(z_1):u_1\subseteq y\}$.
Note that $|Y_0|=|Y_1|=\ell^{(d-1)(2d-1)}$ and $|(\Gamma_Y)_{d-1}(x)|=\ell^{(d-1)(2d-3)}\gauss{2d-1}{1}_{\ell}$.
Suppose $u\ne u_1$.
Then $u+u_1\subseteq x$ by \eqref{x contains u: Ustimenko}, so that $|Y_0\cap Y_1|\leqslant |\mathrm{Quad}(2d-3,\ell)|=\ell^{(d-1)(2d-3)}$.
But $|Y_0\cap Y_1|\geqslant |Y_0|+|Y_1|-|(\Gamma_Y)_{d-1}(x)|>\ell^{(d-1)(2d-3)}$, a contradiction.
Hence $u=u_1$.
Since $(\Gamma_Y)_{d-1}(x)=\bigcup_{z_1\in\Gamma(x)\backslash Y}\Gamma_d(z_1)\cap Y$, the proof is complete.
\end{proof}

\begin{note}\label{elements of Y contain u: Ustimenko}
$Y\subseteq V\Xi$.
\end{note}

\begin{proof}
Suppose there is $y\in Y\backslash V\Xi$.
Pick $v \in (\Gamma_Y)_{d-1}(x)$ such that $\partial(x,y)+\partial(y,v)=d-1$.
Set $y^{\dagger}=u+(y\cap u^{\perp})\in V\Xi$.
Then by \eqref{x contains u: Ustimenko}, \eqref{elements of last subconstituent contain u: Ustimenko} it follows that $\partial_{\Sigma}(x,v)\leqslant\partial_{\Sigma}(x,y^{\dagger})+\partial_{\Sigma}(y^{\dagger},v)=\partial_{\Sigma}(x,y)+\partial_{\Sigma}(y,v)-2\leqslant 2d-4$, a contradiction.
\end{proof}

By \eqref{elements of Y contain u: Ustimenko} and since $\iota(\Gamma_Y)=\iota(\mathrm{Ust}_{2d-2}(\ell))$ we conclude $Y=V\Xi$.

Next suppose $w=1$.
Then $|Y|=\gauss{m+1}{1}_{\ell}$.
By \cite[Theorem 3.7]{BH1992EJC}, $\Gamma$ has clique number $\gauss{2d}{1}_{\ell}$ and the maximum cliques are of the form $\Sigma(x) \cup \{x\}$ where $x \in X$.

\begin{thm}\label{Ustimenko graph}
Let $Y$ be a nontrivial descendent of $\mathrm{Ust}_{n-1}(\ell)$.
Then $n=2d$ and one of the following holds:
(i) $w=1$ and $Y=\Sigma(x)\cup\{x\}$ for some $x \in X$;
(ii) $w=d-1$ and $Y=\{x \in X:u\subseteq x\}$ for some isotropic subspace $u$ of $[C_{2d-1}(\ell)]$ with $\dim u=1$.
\end{thm}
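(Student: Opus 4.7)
The plan is to split the argument along $w = 1$ versus $w > 1$, exploiting the three preceding notes \eqref{x contains u: Ustimenko}--\eqref{elements of Y contain u: Ustimenko} in the latter case and the classification of maximum cliques of $\mathrm{Ust}_{n-1}(\ell)$ in the former. The overall strategy mirrors the proof of \eqref{half dual polar graph}: first reduce from ``$Y$ is a descendent'' to ``$Y$ is trapped inside $V\Xi$ for a canonical residual dual polar graph $\Xi$,'' and then invoke Theorem \eqref{descendents inherit classical parameters} to force equality by matching intersection arrays.

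For $w > 1$, I would fix $x \in Y$ and choose $z \in \Delta_{n-2w}(x)$ (where $\Delta = \mathrm{Hem}_n(\ell)$, of which $\Gamma$ is a halved graph) with $\Delta_n(z)\cap Y \ne \emptyset$. Applying \eqref{intersection of descendent of bipartite half with last subconstituent} with the quadratic-forms-graph classification \eqref{quadratic forms graph} to the descendent $Y\cap\Delta_n(z)$ of $\Delta_n^2(z)$ identifies this intersection as $\{y\in\Delta_{2d}(z):u\subseteq y\}$ for some isotropic line $u$ of $[C_{2d-1}(\ell)]$ with $z\cap u=0$, and simultaneously forces $n=2d$ and $w=d-1$. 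Define $\Xi$ to be the dual polar graph on the residual polar space of $u$, so $V\Xi=\{y\in V\Sigma:u\subseteq y\}$. The three notes then yield, in order, that $x\in V\Xi$, that $(\Gamma_Y)_{d-1}(x)\subseteq V\Xi$, and finally that $Y\subseteq V\Xi$. By Theorem \eqref{descendents inherit classical parameters}, $\Gamma_Y$ has classical parameters $(d-1,\ell^2,\alpha,\beta)$, which coincide with those of the Ustimenko graph on $V\Xi$; since $V\Xi\cap X=V\Xi$ already supports a graph with the same intersection array as $\Gamma_Y$ and since $Y\subseteq V\Xi$, comparing cardinalities yields $Y=V\Xi$, which is exactly the set $\{x\in X:u\subseteq x\}$ described in (ii).

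For $w = 1$, $\Gamma_Y$ is a complete graph and so $|Y|=\beta+1=\gauss{m+1}{1}_\ell$ from the classical parameters. The cited Brouwer--Hemmeter theorem gives clique number $\gauss{2d}{1}_\ell$ for $\Gamma$, so the inequality $\gauss{m+1}{1}_\ell\leqslant\gauss{2d}{1}_\ell$ (coupled with $m\in\{2d-1,2d+1\}$) forces $m=2d-1$, hence $n=2d$. Then $|Y|=\gauss{2d}{1}_\ell$ attains the clique number, so $Y$ is a maximum clique and the classification gives $Y=\Sigma(x)\cup\{x\}$ for some $x\in X$.

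The main obstacle is conceptually packaged in note \eqref{elements of Y contain u: Ustimenko}: the step that promotes containment in $V\Xi$ from the last subconstituent $(\Gamma_Y)_{d-1}(x)$ to \emph{all} of $Y$. The subtlety is that this requires picking a geodesic in $\Gamma_Y$ from $x$ through a putative vertex $y\notin V\Xi$ to some $v\in(\Gamma_Y)_{d-1}(x)$ and noting that the projection $y^{\dagger}=u+(y\cap u^{\perp})$ would shorten the $\Sigma$-distance, producing $\partial_\Sigma(x,v)\leqslant 2d-4$, which contradicts $\partial(x,v)=d-1$. Since that note is already in hand, the remainder of the argument is parameter bookkeeping and a direct appeal to the quoted clique and convex-subgraph classifications.
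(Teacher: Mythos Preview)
Your proposal is correct and follows essentially the same route as the paper: the $w>1$ case is handled exactly by the setup with $z\in\Delta_{n-2w}(x)$, the application of \eqref{intersection of descendent of bipartite half with last subconstituent} together with \eqref{quadratic forms graph}, and the three notes \eqref{x contains u: Ustimenko}--\eqref{elements of Y contain u: Ustimenko}, after which the paper concludes $Y=V\Xi$ from $\iota(\Gamma_Y)=\iota(\mathrm{Ust}_{2d-2}(\ell))$ (your cardinality comparison via \eqref{descendents inherit classical parameters} is the same thing); the $w=1$ case is likewise disposed of via $|Y|=\gauss{m+1}{1}_\ell$ and the Brouwer--Hemmeter clique classification.
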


\subsection*{Twisted Grassmann graphs}

Let $q$ be a prime power and fix a hyperplane $H$ of $\mathbb{F}_q^{2d+1}$.
Let $X_1$ be the set of $(d+1)$-dimensional subspaces of $\mathbb{F}_q^{2d+1}$ not contained in $H$, and $X_2$ the set of $(d-1)$-dimensional subspaces of $H$.
The twisted Grassmann graph $\Gamma=\tilde{J}_q(2d+1,d)$ \cite{DK2005IM,FKT2006IIG,BFK2009EJC,MT2009pre} has vertex set $X=X_1\cup X_2$, and $x,y\in X$ are adjacent if $\dim x+\dim y-2\dim x\cap y=2$.
$\Gamma$ has classical parameters $(d,q,q,\beta)$, where $\beta+1=\gauss{d+2}{1}_q$.
Note that $X_2$ is a descendent of $\Gamma$ and induces $J_q(2d,d-1)$.
We observe
\begin{equation} \label{distance in terms of dimension}
	2\partial(x,y)=\dim x+\dim y-2\dim x\cap y \quad (x,y\in X).
\end{equation}

\begin{note}\label{geodesic}
For $x,y,z\in X$, we have $\partial(x,z)+\partial(z,y)=\partial(x,y)$ if and only if $x\cap y\subseteq z=(x\cap z)+(y\cap z)$.
\end{note}

\begin{proof}
Observe $\dim x\cap z+\dim y\cap z\leqslant\dim z+\dim x\cap y\cap z\leqslant\dim z+\dim x\cap y$, and equality holds if and only if $x\cap y\subseteq z=(x\cap z)+(y\cap z)$.
Hence the result follows from \eqref{distance in terms of dimension}.
\end{proof}

\begin{note}\label{X_1}
Let $Z$ be a nonempty subset of $X_1$ such that $\{z\in X_1:x\cap y\subseteq z=(x\cap z)+(y\cap z)\}\subseteq Z$ for all $x,y\in Z$.
Then at least one of the following holds:
(i)~there is a subspace $u$ of $\mathbb{F}_q^{2d+1}$ with $\dim u=d-w(Z)+1$ such that $u\subseteq z$ for all $z\in Z$;
(ii)~there is a subspace $v$ of $\mathbb{F}_q^{2d+1}$ with $\dim v=d+w(Z)+1$ and not contained in $H$ such that $z\subseteq v$ for all $z\in Z$.
\end{note}

\begin{proof}
Fix $x,y\in Z$ with $\partial(x,y)=w(Z)$ and recall $\dim x\cap y=d-w(Z)+1$ by \eqref{distance in terms of dimension}.
Let $z\in X_1$.
We claim that if $x\cap y\not\subseteq z\not\subseteq x+y$ then $z\not\in Z$.
Suppose $z\in Z$.
Let $\gamma\in (x\cap y)\backslash z$ and $\sigma\in z\backslash ((x+y)\cup H)$.
Let $E$ be a complementary subspace of $\langle\gamma\rangle$ in $x$ such that $x\cap z\subseteq E$.
Set $z^{\dagger}=E+\langle\sigma\rangle\in X_1$.
Since $x\cap z\subseteq z^{\dagger}=(x\cap z^{\dagger})+(z\cap z^{\dagger})$ we find $z^{\dagger}\in Z$.
But then $\gamma\not\in z^{\dagger}\cap y (=E\cap y)\subseteq x\cap y$ implies $\dim z^{\dagger}\cap y<d-w(Z)+1$ and thus $\partial(z^{\dagger},y)>w(Z)$ by \eqref{distance in terms of dimension}, a contradiction.
Hence the claim follows.
It follows that every $z\in Z$ satisfies $x\cap y\subseteq z$ or $z\subseteq x+y$ (or both).
Next we claim that there is no pair $(z_1,z_2)$ of elements of $Z$ such that $x\cap y\not\subseteq z_1\subseteq x+y$ and $x\cap y\subseteq z_2\not\subseteq x+y$.
Suppose such a pair $(z_1,z_2)$ exists.
Let $\zeta\in z_2\backslash ((x+y)\cup H)$.
Let $E$ be a hyperplane of $z_1$ such that $z_1\cap z_2\subseteq E$.
Set $z^{\ddagger}=E+\langle\zeta\rangle\in X_1$.
Since $z_1\cap z_2\subseteq z^{\ddagger}=(z_1\cap z^{\ddagger})+(z_2\cap z^{\ddagger})$ we find $z^{\ddagger}\in Z$.
But this is absurd since $x\cap y\not\subseteq z^{\ddagger}\not\subseteq x+y$.
Hence the claim follows.
Setting $u=x\cap y$ and $v=x+y$, we find that at least one of (i), (ii) holds.
\end{proof}

\begin{note}\label{X_1, X_2}
Let $Z$ be a nonempty convex subset of $X$.
If there are vertices $x\in Z\cap X_1$, $y\in Z\cap X_2$ with $\partial(x,y)=w(Z)$, then for each $i=1,2$ at least one of the following holds:
(i) there is a subspace $u$ of $H$ with $\dim u=d-w(Z)$ such that $u\subseteq z$ for all $z\in Z\cap X_i$;
(ii) there is a subspace $v$ of $\mathbb{F}_q^{2d+1}$ with $\dim v=d+w(Z)$ and not contained in $H$ such that $z \subseteq v$ for all $z\in Z\cap X_i$.
\end{note}

\begin{proof}
Similar to the proof of \eqref{X_1}, by virtue of \eqref{geodesic}.
\end{proof}

\begin{thm}\label{twisted Grassmann graph}
Let $Y$ be a nontrivial descendent of $\tilde{J}_q(2d+1,d)$.
Then $Y=\{x \in X_2:u \subseteq x\}$ for some subspace $u$ of $H$ with $\dim u=d-w-1$.
\end{thm}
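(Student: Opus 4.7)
By Theorem~\eqref{descendents inherit classical parameters}, $\Gamma_Y$ is distance-regular with classical parameters $(w,q,q,\gauss{d+2}{1}_q-1)$, which coincide with those of the Grassmann graph $J_q(d+w+1,w)$; in particular $|Y|=\gauss{d+w+1}{w}_q$. By Theorem~\eqref{convexity and classical parameters}, $Y$ is convex, so Note~\eqref{geodesic} implies $Y$ is closed under the geometric geodesic operation $x\cap y\subseteq z=(x\cap z)+(y\cap z)$. The strategy is to prove $Y\subseteq X_2$; Proposition~\eqref{transitivity} then exhibits $Y$ as a descendent of the induced Grassmann subgraph $\Gamma_{X_2}\cong J_q(2d,d-1)$, and since $\nu=2d$ exceeds $2d'=2(d-1)$ here, the second alternative of Example~\eqref{truncated projective geometry} is excluded, yielding $Y=\{x\in X_2:u\subseteq x\}$ with $\dim u=(d-1)-w=d-w-1$.

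To rule out $Y\subseteq X_1$, I apply Note~\eqref{X_1} with $Z=Y$. Alternative~(i) forces $|Y|\leqslant\gauss{d+w}{w}_q<\gauss{d+w+1}{w}_q=|Y|$, and alternative~(ii) forces $|Y|\leqslant\gauss{d+w+1}{w}_q-\gauss{d+w}{w-1}_q<|Y|$; both are contradictions, so $Y\cap X_2\ne\emptyset$.

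If $Y$ also meets $X_1$, I pick $x\in Y\cap X_1$ and $y\in Y\cap X_2$ with $\partial(x,y)=w$ and invoke Note~\eqref{X_1, X_2}. The subspaces appearing in its conclusion are $u=x\cap y\subseteq H$ of dimension $d-w$, and $v=x+y\not\subseteq H$ of dimension $d+w$; they are common to $i=1,2$. One then runs through the four combinations of alternatives~(i)/(ii) for $i=1,2$ and derives a contradiction in each. The subsidiary matter that $\partial(x,y)=w$ is realizable by a mixed pair (rather than internally within $Y\cap X_1$ or $Y\cap X_2$) is handled by the argument of the previous paragraph together with the classification of convex subsets of $J_q(2d,d-1)$ applied to $Y\cap X_2$.

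The chief technical difficulty lies in the ``(i) for both $i$'' sub-case, in which $u\subseteq z$ for every $z\in Y$. A direct count of the ``star'' $\{z\in X:u\subseteq z\}$ yields $q^d\gauss{d+w}{w}_q+\gauss{w}{1}_q$, which, when $d>w$, actually \emph{exceeds} $|Y|=\gauss{d+w+1}{w}_q$; so cardinality alone will not produce a contradiction. To complete the argument here one must invoke structural features of $\Gamma_Y$ --- for instance, by counting the neighbors of a fixed $x\in Y\cap X_1$ that lie in $Y\cap X_2$ and matching them against the intersection numbers $c_1(\Gamma_Y)=1$ and $a_1(\Gamma_Y)$ read off from $\iota(J_q(d+w+1,w))$ --- to derive an incompatibility between a mixed $Y$ and the geometry of the star of $u$. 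The remaining three combinations of alternatives succumb more directly to Gaussian-coefficient inequalities combining the bounds on $|Y\cap X_1|$ and $|Y\cap X_2|$.
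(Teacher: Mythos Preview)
Your outline follows the paper through the reduction to $Y\subseteq X_2$, but it has a genuine gap in the mixed case. You apply only Note~\eqref{X_1, X_2} to bound $|Y\cap X_1|$ and $|Y\cap X_2|$ separately, and you correctly notice that in the ``(i) for both $i$'' sub-case the resulting cardinality bound $q^d\gauss{d+w}{w}_q+\gauss{d+w}{w-1}_q$ (your $\gauss{w}{1}_q$ is a slip) exceeds $|Y|$ whenever $w<d$. Your proposed remedy---an unspecified neighbor count against $\iota(J_q(d+w+1,w))$---is not carried out, and it is far from clear that such an argument can be made to work.

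The idea you are missing is that $Y\cap X_1$ is \emph{itself} closed under the geodesic operation of Note~\eqref{geodesic} restricted to $X_1$, so Note~\eqref{X_1} applies to $Z=Y\cap X_1$ with its own width $w_1:=w(Y\cap X_1)\leqslant w$. This yields a containing subspace of dimension $d-w_1+1$ (one more than the $d-w$ coming from \eqref{X_1, X_2}) or a containing subspace of dimension $d+w_1+1$. Combining the two notes case by case gives the uniform bound $|Y\cap X_1|\leqslant\gauss{d+w}{w}_q$; in particular, in your problematic case one gets $|Y\cap X_1|\leqslant\gauss{d+w_1}{w_1}_q\leqslant\gauss{d+w}{w}_q$ directly from \eqref{X_1}(i). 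This, together with the bounds $|Y\cap X_2|\leqslant\max\bigl(\gauss{d+w}{w-1}_q,\gauss{d+w-1}{w}_q\bigr)<q^{d+1}\gauss{d+w}{w-1}_q$ from \eqref{X_1, X_2}, forces $|Y|<\gauss{d+w}{w}_q+q^{d+1}\gauss{d+w}{w-1}_q=\gauss{d+w+1}{w}_q$, the desired contradiction. Your handling of the existence of a mixed pair at distance $w$ is also more circuitous than necessary: the paper simply picks $z\in Y\cap X_1$, $y\in Y\cap X_2$, any $x\in Y$ on a geodesic through $z$ with $\partial(x,y)=w$, and uses convexity of $X_2$ to conclude $x\in X_1$.
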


\begin{proof}
Note that $\iota(\Gamma_Y)=\iota(J_q(d+w+1,w))$, whence $|Y|=\gauss{d+w+1}{w}_q$.
First suppose $Y\subseteq X_1$.
Then in view of \eqref{geodesic}, if \eqref{X_1}(i) holds then $|Y|\leqslant\gauss{d+w}{w}_q<\gauss{d+w+1}{w}_q$, and if \eqref{X_1}(ii) holds then $|Y|\leqslant\gauss{d+w+1}{d+1}_q-\gauss{d+w}{d+1}_q<\gauss{d+w+1}{w}_q$, a contradiction.
Hence $Y\not\subseteq X_1$.
Next suppose $Y\cap X_1$, $Y\cap X_2$ are nonempty.
Let $z\in Y\cap X_1$, $y\in Y\cap X_2$, and pick any $x\in Y$ with $\partial(x,y)=w$, $\partial(x,z)=w-\partial(z,y)$.
Since $X_2$ is convex we find $x\in Y\cap X_1$.
Let $w_1:=w(Y\cap X_1)\leqslant w$.
By \eqref{geodesic}, $Y\cap X_1$ satisfies the assumptions of \eqref{X_1}.
If $Y\cap X_1$ satisfies \eqref{X_1}(i) then $|Y\cap X_1|\leqslant\gauss{d+w_1}{w_1}_q\leqslant\gauss{d+w}{w}_q$.
If $Y\cap X_1$ satisfies \eqref{X_1, X_2}(ii) then $|Y\cap X_1|\leqslant\gauss{d+w}{d+1}_q\leqslant\gauss{d+w}{w}_q$.
If $Y\cap X_1$ satisfies both \eqref{X_1}(ii) and \eqref{X_1, X_2}(i) then $|Y\cap X_1|\leqslant\gauss{w+w_1+1}{w+1}_q\leqslant\gauss{d+w}{w}_q$ since $w<d$.
Hence we always have $|Y\cap X_1|\leqslant\gauss{d+w}{w}_q$.
On the other hand, if $Y\cap X_2$ satisfies \eqref{X_1, X_2}(i) then $|Y\cap X_2|\leqslant\gauss{d+w}{w-1}_q$, and if $Y\cap X_2$ satisfies \eqref{X_1, X_2}(ii) then $|Y\cap X_2|\leqslant\gauss{d+w-1}{w}_q$.
Since $q^{d+1} \gauss{d+w}{w-1}_q> \gauss{d+w-1}{w}_q$, it follows that $|Y|=|Y \cap X_1|+|Y \cap X_2|< \gauss{d+w}{w}_q+q^{d+1} \gauss{d+w}{w-1}_q= \gauss{d+w+1}{w}_q$, a contradiction.
Hence $Y \subseteq X_2$, and by \eqref{transitivity}, \eqref{truncated projective geometry}, $Y$ must be of the form as in \eqref{twisted Grassmann graph}.
\end{proof}

\subsection*{Summary and remarks}

Let $\mathscr{P}$ be the set of descendents of $\Gamma$ and $w(\mathscr{P})=\{w(Y):Y\in\mathscr{P}\}$.
In the table below, we list $w(\mathscr{P})$ for each of the $15$ families of graphs with classical parameters.

\begin{center}
\begin{tabular}{llc}
\hline \\[-.17in]
\multicolumn{1}{c}{$\Gamma$} & \multicolumn{1}{c}{\#} & $w(\mathscr{P})\backslash\{0,d\}$ \\[.03in]
\hline \\[-.17in]
$J(\nu,d)$ ($\nu \geqslant 2 d$) & \eqref{truncated Boolean algebra} & $\{1,2,\dots,d-1\}$ \\[.03in]
$H(d,\ell)$ ($\ell \geqslant 2$) & \eqref{Hamming matroid} & $\{1,2,\dots,d-1\}$ \\[.03in]
$J_q(\nu,d)$ ($\nu \geqslant 2 d$) & \eqref{truncated projective geometry} & $\{1,2,\dots,d-1\}$ \\[.03in]
$\mathrm{Bil}_q(d,e)$ ($e \geqslant d$) & \eqref{attenuated space} & $\{1,2,\dots,d-1\}$ \\[.03in]
Dual polar graph & \eqref{polar spaces} & $\{1,2,\dots,d-1\}$ \\[.03in]
\hline \\[-.17in]
$\mathrm{Doob}(d_1,d_2)$ ($d=2d_1+d_2$) & \eqref{Doob graph} & $\{1,2,\dots,d-1\}$ \\[.03in]
$\mathrm{Hem}_d(q)$ ($q$ : odd) & \eqref{Hemmeter graph} & $\{1,2,\dots,d-1\}$ \\[.03in]
$\tilde{J}_q(2d+1,d)$ & \eqref{twisted Grassmann graph} & $\{1,2,\dots,d-1\}$ \\[.03in]
\hline \\[-.17in]
$\frac{1}{2}H(n,2)$ & \eqref{halved cube} & $\{1,d-1\}$ ($n=2d$), $\emptyset$ ($n=2d+1$)\\[.03in]
$\mathrm{Her}(d,\ell)$ & \eqref{Hermitean forms graph} & $\emptyset$ \\[.03in]
$\mathrm{Alt}(n,\ell)$ & \eqref{alternating forms graph} & $\{1,d-1\}$ ($n=2d$), $\emptyset$ ($n=2d+1$) \\[.03in]
$\mathrm{Quad}(n-1,\ell)$ & \eqref{quadratic forms graph} & $\{1,d-1\}$ ($n=2d$), $\emptyset$ ($n=2d+1$) \\[.03in]
$U(2d,\ell)$ & \eqref{unitary dual polar graph} & $\emptyset$ \\[.03in]
$D_{n,n}(\ell)$ & \eqref{half dual polar graph} & $\{1,d-1\}$ ($n=2d$), $\emptyset$ ($n=2d+1$) \\[.03in]
$\mathrm{Ust}_{n-1}(\ell)$ ($\ell$ : odd) & \eqref{Ustimenko graph} & $\{1,d-1\}$ ($n=2d$), $\emptyset$ ($n=2d+1$) \\[.03in]
\hline
\end{tabular}
\end{center}

Note that the $5$ families of the first group in the table are associated with regular semilattices, and that the $3$ families of the second group have the classical parameters of graphs belonging to the first group.
It should be remarked however that, for every $i$ $(0<i<d)$, the graphs in the second group possess pairs of vertices at distance $i$ which are not contained in any descendent with width $i$, with the exception of $(\Gamma,i)=(\mathrm{Hem}_d(q),1)$.

In \eqref{from Gamma_Y to Gamma} we posed the problem of determining the filter $V_{[\Gamma]}$ of the poset $\mathscr{L}$ generated by the isomorphism class $[\Gamma]$.
We end this section with describing $V_{[\Gamma]}$ for some examples where $\Gamma$ has classical parameters $(d,q,\alpha,\beta)$ with $q=1$ or $q<-1$.
The distance-regular graphs with classical parameters $(d,1,\alpha,\beta)$ $(d\geqslant 3)$ are known: the Johnson graphs, Hamming graphs, Doob graphs, halved cubes and the Gosset graph; see e.g., \cite[Theorem 6.1.1]{BCN1989B}.
By virtue of \eqref{descendents inherit classical parameters}, \eqref{truncated Boolean algebra}, \eqref{Hamming matroid}, \eqref{Doob graph} and \eqref{halved cube}, we have the following table:
\begin{center}
\begin{tabular}{ll}
\hline
\\[-.17in]
\multicolumn{1}{c}{$\Gamma$} & \multicolumn{1}{c}{$V_{[\Gamma]}$} \\[.02in]
\hline
\\[-.15in]
$J(\nu,d)$ $(\nu\geqslant 2d)$ & $\{[J(\nu+i,d+i)]:0\leqslant i\leqslant\nu-2d\}$ \\[.06in]
$H(d,\ell)$ $(\ell\ne 4)$ & $\{[H(e,\ell)]:e\geqslant d\}$ \\[.06in]
$H(d,4)$ & $\{[H(e,4)]:e\geqslant d\}\cup\{[\mathrm{Doob}(d_1,d_2)]:d_2\geqslant d\}$ \\[.06in]
$\mathrm{Doob}(d_1,d_2)$ & $\{[\mathrm{Doob}(e_1,e_2)]:e_1\geqslant d_1,\ e_2\geqslant d_1\}$ \\[.06in]
$\frac{1}{2}H(2d,2)$ & $\{[\frac{1}{2}H(2d,2)]\}$ \\[.06in]
$\frac{1}{2}H(2d+1,2)$ & $\{[\frac{1}{2}H(2d+1,2)],[\frac{1}{2}H(2d+2,2)]\}$ \\[.06in]
\hline
\end{tabular}
\end{center}

Weng \cite{Weng1999JCTB} showed that if a distance-regular graph $\Delta$ having classical parameters $(d,q,\alpha,\beta)$ satisfies $d\geqslant 4$, $q<-1$, $a_1\ne 0$, $c_2>1$ then either (i) $\Delta=\mathrm{Her}(d,\ell)$ ($q=-\ell$); (ii)~$\Delta=U(2d,\ell)$ ($q=-\ell$); or (iii) $\alpha=(q-1)/2$, $\beta=-(1+q^d)/2$ and $-q$ is a power of an odd prime.
Hence, by \eqref{descendents inherit classical parameters}, \eqref{Hermitean forms graph}, \eqref{unitary dual polar graph} it follows that $[\mathrm{Her}(d,\ell)]$, $[U(2d,\ell)]$ are maximal elements in $\mathscr{L}$ for all $d\geqslant 3$, so that the filter generated by any of them is a singleton.
It would be interesting if the poset $\mathscr{L}$ is of some use, e.g., in the classification of distance-regular graphs $\Gamma$ with $\iota(\Gamma)=\iota(J_q(2d+1,d))=\iota(\tilde{J}_q(2d+1,d))$.

\appendix

\section{The list of parameter arrays}

We display the parameter arrays of Leonard systems.
The data in \eqref{list of parameter arrays} is taken from \cite{Terwilliger2005DCC}, but the presentation is changed so as to be consistent with the notation in \cite{BI1984B,Terwilliger1992JAC,Terwilliger1993JACa,Terwilliger1993JACb}.

\begin{thm}[{\cite[Theorem 5.16]{Terwilliger2005DCC}}]\label{list of parameter arrays}
Let $\Phi$ be the Leonard system from \eqref{Leonard system} and let $p(\Phi)=\left(\{\theta_i\}_{i=0}^d;\{\theta_i^*\}_{i=0}^d;\{\varphi_i\}_{i=1}^d;\{\phi_i\}_{i=1}^d\right)$ be as in \eqref{parameter array}.
Then at least one of the following cases I, IA, II, IIA, IIB, IIC, III hold:
\begin{itemize}
\item[(I)] $p(\Phi)=p(\mathrm{I};q,h,h^*,r_1,r_2,s,s^*,\theta_0,\theta_0^*,d)$ where $r_1r_2=ss^*q^{d+1}$,
\begin{align*}
	\theta_i&=\theta_0+h(1-q^i)(1-sq^{i+1})q^{-i}, \\
	\theta_i^*&=\theta_0^*+h^*(1-q^i)(1-s^*q^{i+1})q^{-i}
\end{align*}
for $0\leqslant i\leqslant d$, and
\begin{align*}
	\varphi_i&=hh^*q^{1-2i}(1-q^i)(1-q^{i-d-1})(1-r_1q^i)(1-r_2q^i), \\
	\phi_i&=\begin{cases} hh^*q^{1-2i}(1-q^i)(1-q^{i-d-1})(r_1-s^*q^i)(r_2-s^*q^i)/s^* & \text{if} \ s^*\ne 0, \\ hh^*q^{d+2-2i}(1-q^i)(1-q^{i-d-1})(s-r_1q^{i-d-1}-r_2q^{i-d-1}) & \text{if} \ s^*=0 \end{cases}
\end{align*}
for $1\leqslant i\leqslant d$.
\item[(IA)] $p(\Phi)=p(\mathrm{IA};q,h^*,r,s,\theta_0,\theta_0^*,d)$ where
\begin{align*}
	\theta_i&=\theta_0-sq(1-q^i), \\
	\theta_i^*&=\theta_0^*+h^*(1-q^i)q^{-i}
\end{align*}
for $0\leqslant i\leqslant d$, and
\begin{align*}
	\varphi_i&=-rh^*q^{1-i}(1-q^i)(1-q^{i-d-1}), \\
	\phi_i&=h^*q^{d+2-2i}(1-q^i)(1-q^{i-d-1})(s-rq^{i-d-1})
\end{align*}
for $1\leqslant i\leqslant d$.
\item[(II)] $p(\Phi)=p(\mathrm{II};h,h^*,r_1,r_2,s,s^*,\theta_0,\theta_0^*,d)$ where $r_1+r_2=s+s^*+d+1$,
\begin{align*}
	\theta_i&=\theta_0+hi(i+1+s), \\
	\theta_i^*&=\theta_0^*+h^*i(i+1+s^*)
\end{align*}
for $0\leqslant i\leqslant d$, and
\begin{align*}
	\varphi_i&=hh^*i(i-d-1)(i+r_1)(i+r_2), \\
	\phi_i&=hh^*i(i-d-1)(i+s^*-r_1)(i+s^*-r_2)
\end{align*}
for $1\leqslant i\leqslant d$.
\item[(IIA)] $p(\Phi)=p(\mathrm{IIA};h,r,s,s^*,\theta_0,\theta_0^*,d)$ where
\begin{align*}
	\theta_i&=\theta_0+hi(i+1+s), \\
	\theta_i^*&=\theta_0^*+s^*i
\end{align*}
for $0\leqslant i\leqslant d$, and
\begin{align*}
	\varphi_i&=hs^*i(i-d-1)(i+r), \\
	\phi_i&=hs^*i(i-d-1)(i+r-s-d-1)
\end{align*}
for $1\leqslant i\leqslant d$.
\item[(IIB)] $p(\Phi)=p(\mathrm{IIB};h^*,r,s,s^*,\theta_0,\theta_0^*,d)$ where
\begin{align*}
	\theta_i&=\theta_0+si, \\
	\theta_i^*&=\theta_0^*+h^*i(i+1+s^*)
\end{align*}
for $0\leqslant i\leqslant d$, and
\begin{align*}
	\varphi_i&=h^*si(i-d-1)(i+r), \\
	\phi_i&=-h^*si(i-d-1)(i+s^*-r)
\end{align*}
for $1\leqslant i\leqslant d$.
\item[(IIC)] $p(\Phi)=p(\mathrm{IIC};r,s,s^*,\theta_0,\theta_0^*,d)$ where
\begin{align*}
	\theta_i&=\theta_0+si, \\
	\theta_i^*&=\theta_0^*+s^*i
\end{align*}
for $0\leqslant i\leqslant d$, and
\begin{align*}
	\varphi_i&=ri(i-d-1), \\
	\phi_i&=(r-ss^*)i(i-d-1)
\end{align*}
for $1\leqslant i\leqslant d$.
\item[(III)] $p(\Phi)=p(\mathrm{III};h,h^*,r_1,r_2,s,s^*,\theta_0,\theta_0^*,d)$ where $r_1+r_2=-s-s^*+d+1$,
\begin{align*}
	\theta_i&=\theta_0+h(s-1+(1-s+2i)(-1)^i), \\
	\theta_i^*&=\theta_0^*+h^*(s^*-1+(1-s^*+2i)(-1)^i)
\end{align*}
for $0\leqslant i\leqslant d$, and
\begin{align*}
	\varphi_i&=\begin{cases} -4hh^*i(i+r_1) & \text{if $i$ even, $d$ even}, \\ -4hh^*(i-d-1)(i+r_2) & \text{if $i$ odd, $d$ even}, \\ -4hh^*i(i-d-1) & \text{if $i$ even, $d$ odd}, \\ -4hh^*(i+r_1)(i+r_2) & \text{if $i$ odd, $d$ odd}, \end{cases} \\
	\phi_i&=\begin{cases} 4hh^*i(i-s^*-r_1) & \text{if $i$ even, $d$ even}, \\ 4hh^*(i-d-1)(i-s^*-r_2) & \text{if $i$ odd, $d$ even}, \\ -4hh^*i(i-d-1) & \text{if $i$ even, $d$ odd}, \\ -4hh^*(i-s^*-r_1)(i-s^*-r_2) & \text{if $i$ odd, $d$ odd} \end{cases}
\end{align*}
for $1\leqslant i\leqslant d$.
\end{itemize}
\end{thm}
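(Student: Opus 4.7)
The plan is to derive the classification from the basic recurrences forced on $\{\theta_i\}$ and $\{\theta_i^*\}$ by the Leonard system axioms (LS1)--(LS5), then solve these recurrences, and finally reconstruct $\{\varphi_i\}$, $\{\phi_i\}$. The starting point is the observation (essentially due to Leonard, see \cite[Theorem 3.2]{Terwilliger2001LAA} and the $\natural$-isomorphism mentioned just after \eqref{parameter array}) that the tridiagonal action of $\mathfrak{a}^*$ on the eigenbasis of $\mathfrak{a}$ (and vice versa) forces the existence of scalars $\beta,\gamma,\gamma^*,\varrho,\varrho^*$ such that, for $1\leqslant i\leqslant d-1$,
\begin{equation*}
	\theta_{i-1}-\beta\theta_i+\theta_{i+1}=\gamma, \qquad \theta_{i-1}^*-\beta\theta_i^*+\theta_{i+1}^*=\gamma^*,
\end{equation*}
and moreover
\begin{equation*}
	\theta_{i-1}^2-\beta\theta_{i-1}\theta_i+\theta_i^2-\gamma(\theta_{i-1}+\theta_i)=\varrho, \qquad \theta_{i-1}^{*2}-\beta\theta_{i-1}^*\theta_i^*+\theta_i^{*2}-\gamma^*(\theta_{i-1}^*+\theta_i^*)=\varrho^*,
\end{equation*}
with a \emph{common} parameter $\beta$. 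The first step of the proof would be to derive these ``Askey--Wilson relations'' by expanding $\mathfrak{e}_i^*\mathfrak{a}\mathfrak{a}^*\mathfrak{a}\mathfrak{e}_j^*$ in two ways (one using (LS4), the other using the spectral decomposition of $\mathfrak{a}$) and comparing coefficients.

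The second step is to solve the three-term recurrences for $\theta_i$ and $\theta_i^*$. The characteristic equation $x^2-\beta x+1=0$ dictates a trichotomy: (a) two distinct nonzero roots $q, q^{-1}$ with $q\ne\pm 1$, giving the geometric-type solutions $\theta_i=\theta_0+h(1-q^i)(1-sq^{i+1})q^{-i}$ (or degenerate forms when $h=0$ or $s=0$); (b) $q=1$, i.e., $\beta=2$, giving quadratic-in-$i$ solutions $\theta_i=\theta_0+hi(i+1+s)$ (or the further degenerations where $h=0$, leading to the linear $\theta_i=\theta_0+si$); and (c) $q=-1$, i.e., $\beta=-2$, giving the alternating solutions $\theta_i=\theta_0+h(s-1+(1-s+2i)(-1)^i)$. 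Running this analysis on both $\{\theta_i\}$ and $\{\theta_i^*\}$ independently, and pairing the results according to whether each is ``nondegenerate'' or ``degenerate,'' yields the seven labels I, IA, II, IIA, IIB, IIC, III. (One also must verify that the pairings IA with degenerate $\theta_i^*$, IIA with degenerate $\theta_i^*$, etc., exhaust the possibilities up to the $*$-symmetry that swaps $\theta_i \leftrightarrow \theta_i^*$.)

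The third step is to determine $\{\varphi_i\}$ and $\{\phi_i\}$. Once $\{\theta_i\}, \{\theta_i^*\}$ are known, the scalars $\varphi_i$ are constrained by the ``split sequence'' identities \cite[Theorem 17.7]{Terwilliger2004LAA} (in our notation they appear as \eqref{bi, ci}), plus the normalization that comes from requiring $\sum_{i=0}^d\mathfrak{e}_i^*=\mathfrak{1}$ and that the $\mathfrak{e}_i$, $\mathfrak{e}_i^*$ be genuine idempotents. These identities, together with the fact that $\varphi_i$ and $\phi_i$ appear as superdiagonal entries of $\mathfrak{a}^{*\natural}$ in the $\natural$-representation and its ``$\Downarrow$''-twist, give explicit polynomial recursions $\varphi_{i+1}/\varphi_i = R(i)$ and $\phi_{i+1}/\phi_i = R^{\Downarrow}(i)$ for rational functions $R, R^{\Downarrow}$ determined by $\{\theta_i\}, \{\theta_i^*\}$; solving these is routine and yields the product formulas displayed in \eqref{list of parameter arrays}. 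The constraint $r_1r_2=ss^*q^{d+1}$ in Case I (and the analogous additive constraint $r_1+r_2=s+s^*+d+1$ in Case II, $r_1+r_2=-s-s^*+d+1$ in Case III) arises as an integrability/closing condition from matching the $i=d$ boundary against the initial value at $i=0$.

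The main obstacle is the bookkeeping in Step~2: carefully enumerating the degenerate pairings of $\{\theta_i\}$ with $\{\theta_i^*\}$ so that no case is missed and no case is listed twice, and verifying that each candidate parameter array actually arises from some Leonard system (i.e., that the resulting $\varphi_i,\phi_i$ are all nonzero, which is exactly what the axioms (LS4), (LS5) require). This last nonvanishing check is what forces the parameter restrictions such as $r_1r_2=ss^*q^{d+1}$ rather than merely recommending them, and it is where the inductive step and the boundary conditions have to be reconciled.
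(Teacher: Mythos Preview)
The paper does not give its own proof of this statement: it is quoted (with adjusted notation) from \cite[Theorem 5.16]{Terwilliger2005DCC} and placed in the appendix purely as reference data, with the feasibility conditions deferred to \cite[Examples 5.3--5.15]{Terwilliger2005DCC}. So there is no ``paper's proof'' to compare against. Your outline is a fair high-level summary of how the argument runs in the original source: one extracts from (LS4)--(LS5) a three-term linear recurrence with a \emph{common} parameter $\beta$ satisfied by both $\{\theta_i\}$ and $\{\theta_i^*\}$, solves it according to whether $\beta\ne\pm 2$, $\beta=2$, or $\beta=-2$, pairs the degenerate and nondegenerate solutions for the two sequences, and then reconstructs $\{\varphi_i\}$ and $\{\phi_i\}$.

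One correction on a point of detail. The constraints $r_1r_2=ss^*q^{d+1}$ in Case~I, $r_1+r_2=s+s^*+d+1$ in Case~II, and $r_1+r_2=-s-s^*+d+1$ in Case~III are not forced by the nonvanishing of $\varphi_i,\phi_i$ as you suggest in your last paragraph. They are part of the \emph{definition} of the parametrization: the auxiliary symbols $r_1,r_2$ are introduced as the two roots of a quadratic whose coefficients are built from the genuinely free parameters $s,s^*$ (and $q,d$), precisely so that $\varphi_i$ and $\phi_i$ factor nicely; the product or sum relation then holds by construction. The nonvanishing requirements $\varphi_i\ne 0$, $\phi_i\ne 0$ (together with mutual distinctness of the $\theta_i$ and of the $\theta_i^*$) are a separate layer of inequality-type feasibility constraints on the free parameters, and the present paper explicitly refers the reader to \cite[Examples 5.3--5.15]{Terwilliger2005DCC} for those. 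This is only a bookkeeping issue and does not affect the soundness of your overall plan.
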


We call an array $p(\cdot;\cdots)$ in \eqref{list of parameter arrays} \emph{feasible} if it is the parameter array of an actual Leonard system.
See \cite[Examples 5.3--5.15]{Terwilliger2005DCC} for the explicit feasibility conditions of these arrays.
The following is essentially\footnote{The scalars $\varphi_i$, $\phi_i$ first appeared in \cite{Terwilliger2001LAA}. Hence we reproduce \cite[Lemma 2.4]{Terwilliger1992JAC} in terms of $p(\Phi)$ for accuracy purposes.} given (without proof) in \cite[Lemma 2.4]{Terwilliger1992JAC} and can be read off the proof of \cite[Theorem 5.16]{Terwilliger2005DCC}:

\begin{prop}\label{uniqueness of expression}
Referring to \eqref{list of parameter arrays}, (i)--(iv) hold below:
\begin{enumerate}
\item The following arrays are equal:
\begin{gather*}
	p(\mathrm{I};q,h,h^*,r_1,r_2,s,s^*,\theta_0,\theta_0^*,d), \\
	p(\mathrm{I};q,h,h^*,r_2,r_1,s,s^*,\theta_0,\theta_0^*,d).
\end{gather*}
\item If $ss^*\ne 0$ then the following arrays are equal:
\begin{gather*}
	p(\mathrm{I};q,h,h^*,r_1,r_2,s,s^*,\theta_0,\theta_0^*,d), \\
	p(\mathrm{I};q^{-1},hsq,h^*s^*q,r_1^{-1},r_2^{-1},s^{-1},s^{*-1},\theta_0,\theta_0^*,d).
\end{gather*}
\item The following arrays are equal:
\begin{gather*}
	p(\mathrm{II};h,h^*,r_1,r_2,s,s^*,\theta_0,\theta_0^*,d), \\
	p(\mathrm{II};h,h^*,r_2,r_1,s,s^*,\theta_0,\theta_0^*,d).
\end{gather*}
\item If $d$ is odd then the following arrays are equal:
\begin{gather*}
	p(\mathrm{III};h,h^*,r_1,r_2,s,s^*,\theta_0,\theta_0^*,d), \\
	p(\mathrm{III};h,h^*,r_2,r_1,s,s^*,\theta_0,\theta_0^*,d).
\end{gather*}
\end{enumerate}
Moreover, there is no equality among the feasible arrays $p(\cdot;\cdots)$ with $d\geqslant 3$, other than (i)--(iv) above (and combinations of (i), (ii)).
\end{prop}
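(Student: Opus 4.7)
The plan has two parts: first, verify that each of (i)--(iv) really is an equality of parameter arrays; second, show these are the only coincidences among feasible arrays with $d\geqslant 3$.

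Part one is a direct substitution. For (i), (iii), and the $d$ odd instance of (iv), the formulas in \eqref{list of parameter arrays} for $\theta_i,\theta_i^*$ make no reference to $r_1,r_2$, while those for $\varphi_i,\phi_i$ (together with the respective side relations $r_1r_2=ss^*q^{d+1}$, $r_1+r_2=s+s^*+d+1$, $r_1+r_2=-s-s^*+d+1$) are visibly symmetric under $r_1\leftrightarrow r_2$; so the assertion is immediate. Incidentally, this also explains why no (iv)-type identity holds for $d$ even: in that case the Case III formula for $\varphi_i$ involves $r_1$ alone when $i$ is even and $r_2$ alone when $i$ is odd, breaking the symmetry. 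The only nontrivial check is (ii), where one verifies the algebraic identity
\begin{equation*}
	(hsq)(1-q^{-i})(1-s^{-1}q^{-i-1})\,q^i = h(1-q^i)(1-sq^{i+1})\,q^{-i},
\end{equation*}
its dual for $\theta_i^*$, and the companion identities for $\varphi_i,\phi_i$ (where $r_1r_2=ss^*q^{d+1}$ is used to convert between $r_j$ and $r_j^{-1}$); these are straightforward.

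For part two, I would follow the recovery procedure implicit in the proof of \cite[Theorem 5.16]{Terwilliger2005DCC}: given a feasible array $p(\Phi)$ with $d\geqslant 3$, one first reads off the case by examining the type of recurrence satisfied by $\{\theta_i\}_{i=0}^d$ (geometric-plus-constant in Cases I, IIB; quadratic in I, IIA; linear in IA, IIC; $(-1)^i$-modulated in III) and dually by $\{\theta_i^*\}_{i=0}^d$. The hypothesis $d\geqslant 3$ gives enough values to pin down these behaviors rigidly. Once the case is fixed, the scalars $\theta_0,\theta_0^*,h,h^*,s,s^*$ are recovered from the first few $\theta_i,\theta_i^*$ --- uniquely in every case except Case I, where one has exactly the ambiguity $q\leftrightarrow q^{-1}$ coupled with the rescalings recorded in (ii). With these scalars fixed, the values $\varphi_1,\phi_1$ together with the side relation give two equations in $r_1,r_2$, determining $\{r_1,r_2\}$ as an unordered pair; this accounts for the $r_1\leftrightarrow r_2$ ambiguity in (i), (iii), and the $d$ odd case of (iv).

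The main obstacle is the separation of the seven cases: one must verify that no single feasible array admits presentations in two distinct cases. This is the heart of \cite[Theorem 5.16]{Terwilliger2005DCC} and relies on $d\geqslant 3$ so that the recurrence type is not obscured by degeneracies at small $d$. A secondary subtlety is Case III with $d$ odd, where one must confirm that apart from $r_1\leftrightarrow r_2$ no further symmetry arises --- in particular no accidental $q$-inversion (which is automatic, as Case III has no $q$-parameter) and no reparametrization mixing $(h,s)$ with $(h^*,s^*)$ (ruled out because these pairs are distinguished by acting on $\theta_i$ versus $\theta_i^*$).
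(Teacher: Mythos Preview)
Your approach is correct and is in fact more detailed than what the paper itself offers: the paper does not prove \eqref{uniqueness of expression} but simply records that it ``is essentially given (without proof) in \cite[Lemma 2.4]{Terwilliger1992JAC} and can be read off the proof of \cite[Theorem 5.16]{Terwilliger2005DCC}.'' Your Part one (direct substitution) and Part two (recover the case and then the scalars from the shape of $\{\theta_i\},\{\theta_i^*\}$, exactly as in Terwilliger's argument) constitute precisely that reading-off.

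One small correction to your case-by-case description in Part two: your labelling of which sequences $\{\theta_i\}$ are ``geometric-plus-constant,'' ``quadratic,'' or ``linear'' is garbled. From \eqref{list of parameter arrays} one has $\theta_i$ of the form $c_0+c_1q^i+c_2q^{-i}$ in Case I, of the form $c_0+c_1q^i$ in Case IA, quadratic in $i$ in Cases II and IIA, linear in $i$ in Cases IIB and IIC, and of $(-1)^i$-type in Case III (and dually for $\theta_i^*$, with IIA and IIB interchanged). This does not affect your argument---the point is only that for $d\geqslant 3$ these seven shapes are mutually distinguishable, which is exactly what Terwilliger's proof establishes---but the list as you wrote it (``geometric-plus-constant in Cases I, IIB; quadratic in I, IIA'') should be fixed.
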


Finally, we describe the parameter arrays of the $\rho$-descendents of $\Phi$:

\begin{thm}[{\cite[Theorem 6.9]{Tanaka2009LAAb}}]\label{characterization of bilinear form in parametric form}
Let $\Phi$ be the Leonard system \eqref{Leonard system} and let $p(\Phi)$ be given as in \eqref{list of parameter arrays}.
Let $\Phi'$ be a Leonard system with diameter $d'\leqslant d$.
Given an integer $\rho$ $(0\leqslant\rho\leqslant d-d')$, $\Phi'$ is a  $\rho$-descendent of $\Phi$ precisely when $p(\Phi')$ takes the following form:

\medskip\noindent
Case I:
\begin{equation*}
	p(\Phi')=p(\mathrm{I};q,h',h^{*\prime},r_1q^{\rho},r_2q^{\rho},sq^{d-d'},s^*q^{2\rho},\theta_0',\theta_0^{*\prime},d').
\end{equation*}
Case IA:
\begin{equation*}
	p(\Phi')=p(\mathrm{IA};q,h^{*\prime},r',s',\theta_0',\theta_0^{*\prime},d') \quad \text{where} \quad s'/r'=q^{d-d'-\rho}s/r.
\end{equation*}
Case II:
\begin{equation*}
	p(\Phi')=p(\mathrm{II};h',h^{*\prime},r_1+\rho,r_2+\rho,s+d-d',s^*+2\rho,\theta_0',\theta_0^{*\prime},d').
\end{equation*}
Case IIA:
\begin{equation*}
	p(\Phi')=p(\mathrm{IIA};h',r+\rho,s+d-d',s^{*\prime},\theta_0',\theta_0^{*\prime},d').
\end{equation*}
Case IIB:
\begin{equation*}
	p(\Phi')=p(\mathrm{IIB};h^{*\prime},r+\rho,s',s^*+2\rho,\theta_0',\theta_0^{*\prime},d').
\end{equation*}
Case IIC:
\begin{equation*}
	p(\Phi')=p(\mathrm{IIC};r',s',s^{*\prime},\theta_0',\theta_0^{*\prime},d') \quad \text{where} \quad s's^{*\prime}/r'=ss^*/r.
\end{equation*}
Case III, $d$ even, $d'$ even, $\rho$ even; or Case III, $d$ odd, $d'$ odd, $\rho$ even:
\begin{equation*}
	p(\Phi')=p(\mathrm{III};h',h^{*\prime},r_1+\rho,r_2+\rho,s-d+d',s^*-2\rho,\theta_0',\theta_0^{*\prime},d').
\end{equation*}
Case III, $d$ even, $d'$ even, $\rho$ odd:
\begin{equation*}
	p(\Phi')=p(\mathrm{III};h',h^{*\prime},r_2+\rho,r_1+\rho,s-d+d',s^*-2\rho,\theta_0',\theta_0^{*\prime},d').
\end{equation*}
Case III, $d$ even, $d'=1$:
\begin{equation*}
	p(\Phi')=p(\mathrm{IIC};r',s',s^{*\prime},\theta_0',\theta_0^{*\prime},1) \quad \text{where} \quad s's^{*\prime}/r'=1-\phi_{\rho+1}/\varphi_{\rho+1}.
\end{equation*}
Case III, $d$ even, $d'$ odd $\geqslant 3$; or Case III, $d$ odd, either $d'$ even or $\rho$ odd: 
Does not occur.
\end{thm}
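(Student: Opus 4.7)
The plan is to fix a nonzero $\rho$-balanced bilinear form $(\cdot|\cdot):W\times W'\to\mathbb{C}$ (assumed to exist), choose nonzero $u\in\mathfrak{e}_0^* W$ and $u'\in\mathfrak{e}_0^{*\prime} W'$, and analyse the matrix of $(\cdot|\cdot)$ in the bases $\{\mathfrak{e}_i^* u\}_{i=0}^d$ for $W$ and $\{\mathfrak{e}_j^{*\prime} u'\}_{j=0}^{d'}$ for $W'$. By axiom (B1), this matrix is supported on the single shifted diagonal $(j+\rho,j)$; writing $\alpha_j=(\mathfrak{e}_{j+\rho}^* u\mid \mathfrak{e}_j^{*\prime} u')$, all other entries vanish. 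This choice of basis is convenient because the action of $\mathfrak{a}$ on it is given by the three-term recurrence whose coefficients $b_i,a_i,c_i$ are computable from $\theta_i^*,\varphi_i,\phi_i$ via \eqref{bi, ci}, and likewise for $\mathfrak{a}'$ on the primed basis.

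Combining (B1) with the parallel analysis applied to (B2) in the dual eigenbases $\{\mathfrak{e}_i v\}_{i=0}^d$, $\{\mathfrak{e}_j' v'\}_{j=0}^{d'}$ (for any nonzero $v\in\mathfrak{e}_0 W$, $v'\in\mathfrak{e}_0' W'$), and using the explicit change-of-basis from the non-starred to the starred bases given by \eqref{ith component of 0*} and its dual, I would extract two structural identifications: the sequence $\{\theta_j^{*\prime}\}_{j=0}^{d'}$ is an affine image of $\{\theta_{j+\rho}^*\}_{j=0}^{d'}$, and $\{\theta_j'\}_{j=0}^{d'}$ is, after the involution $\Downarrow$, an affine image of an appropriate $(d-d')$-shift of $\{\theta_j\}_{j=0}^d$. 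Given these, \eqref{bi, ci} and \eqref{parameter array of affine transformation} determine $\varphi_i'$ and $\phi_i'$ explicitly, and a direct substitution into the closed forms \eqref{list of parameter arrays} for each of Cases I, IA, II, IIA, IIB, IIC, III produces the tabulated $p(\Phi')$. Conversely, given $p(\Phi')$ of the stated form, I would realize $\Phi'$ abstractly via \cite[Theorem 1.9]{Terwilliger2001LAA} and construct a $\rho$-balanced bilinear form in the starred basis with the $\alpha_j$ chosen to satisfy the recurrence relations, verifying (B2) through the dual of \eqref{ith component of 0*}.

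The main obstacle is the parity bookkeeping in Case III, where the alternating term $(-1)^i$ in $\theta_i,\theta_i^*$ interacts nontrivially with the shift by $\rho$. When $d$ and $d'$ are both even and $\rho$ is odd, the shifted sequence $\{\theta_{j+\rho}^*\}_{j=0}^{d'}$ still has the Case III shape but with $r_1,r_2$ exchanged; when $d$ is even and $d'=1$ the diameter-$1$ Case III array is not distinguishable from Case IIC, giving the stated collapse; and when $d$ is even and $d'$ is odd $\geqslant 3$ (or $d$ is odd with the remaining parity clauses) the parities of $\theta_{j+\rho}^*$ are incompatible with any feasible Case III array of diameter $d'$, ruling out a $\rho$-descendent. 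Verifying these impossibilities and tracking the Case III parameter swaps correctly is the tedious but essential part of the case analysis.
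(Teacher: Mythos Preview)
This theorem is not proved in the present paper at all: it is stated in the appendix with the attribution \cite[Theorem 6.9]{Tanaka2009LAAb} and is used throughout as an imported result. There is therefore no proof here against which to compare your attempt.

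On the substance of your sketch: the treatment of (B1) is sound --- in the starred bases the form is indeed supported on the single shifted diagonal $i=j+\rho$, and from this one does obtain that $\{\theta_j^{*\prime}\}_{j=0}^{d'}$ is an affine image of $\{\theta_{j+\rho}^*\}_{j=0}^{d'}$. But your ``parallel analysis'' of (B2) is where the real difficulty hides, and you pass over it too quickly. Axiom (B2) is \emph{not} a single-diagonal condition: it only forces the matrix of the form in the non-starred bases to be banded of width $d-d'+1$, so there is no immediate affine identification of $\{\theta_j'\}$ with a shift of $\{\theta_j\}$. Extracting the constraint on the unstarred eigenvalue sequence (equivalently, on $s$ in Cases I--II and their degenerations) requires combining the band condition (B2) with the three-term recurrences on both sides simultaneously, and this is the core of the argument in \cite{Tanaka2009LAAb} rather than a routine dualisation of the (B1) step. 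Once those eigenvalue identifications are established, your description of the Case III parity analysis --- the $r_1\leftrightarrow r_2$ swap when $\rho$ is odd, the collapse to Case~IIC at $d'=1$, and the infeasibility in the remaining parity combinations --- matches the stated outcomes.
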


\bigskip

\begin{ack}
The author would like to thank Tatsuro Ito, Jack Koolen, Bill Martin and Paul Terwilliger for valuable discussions and comments, and to Fr\'{e}d\'{e}ric Vanhove for sending him a preprint.
HT also wishes to thank the Department of Mathematics at the University of Wisconsin--Madison for its hospitality throughout the period in which this paper was written.
Support from the JSPS Excellent Young Researchers Overseas Visit Program is also gratefully acknowledged.
\end{ack}

\end{document}